\documentclass[12pt]{amsart}%
\usepackage{amsfonts}
\usepackage{amsmath}
\usepackage{amssymb}
\usepackage{amsthm}
\usepackage{mathrsfs}
\usepackage{graphicx}%
\usepackage{amsmath}
\usepackage{setspace}
\usepackage{hyperref}
\usepackage{cite}
\usepackage{amsfonts}
\usepackage{algorithmic}
\usepackage{textcomp}
\newcommand{\R}{\mathbb R}
\newcommand{\AF}{\mathcal A_F}

\DeclareMathOperator{\distg}{dist_g}
\allowdisplaybreaks[4]
\makeatletter
\@addtoreset{equation}{section}
\makeatother

\newtheorem{theorem}{Theorem}[section]

\newtheorem{lemma}[theorem]{Lemma}

\newtheorem{proposition}[theorem]{Proposition}

\makeatletter
\def\@makefnmark{}
\makeatother

\begin{document}

\title[Multiplicity One for Strictly Stable Anisotropic Allen-Cahn Minimal Hypersurfaces]{Multiplicity One for Strictly Stable Anisotropic Allen-Cahn   Minimal Hypersurfaces}

\author{Lu Chen}
\address[Lu Chen]{Key Laboratory of Algebraic Lie Theory and Analysis of Ministry of Education, School of Mathematics and Statistics, Beijing Institute of Technology, Beijing
100081, PR China;  Tangshan Research Institute, Beijing Institute of Technology, Tangshan 063000, PR China}
\email{chenlu5818804@163.com}

\author{Jiali Lan}
\address[Jiali Lan]{Key Laboratory of Algebraic Lie Theory and Analysis of Ministry of Education, School of Mathematics and Statistics, Beijing Institute of Technology, Beijing
100081, PR China}
\email{17636268505@163.com}


\keywords{anisotropic Allen-Cahn equation; multiplicity one;  strict stability.}
\thanks{The first author was partly supported by the  National Natural Science Foundation of China (No. 12622108),  a grant from Beijing Institute of Technology (No.2022CX01002). }

\begin{abstract}
 In this paper, we study the multiplicity of smooth limit interfaces for the anisotropic Allen-Cahn equation on closed Riemannian manifolds. We prove that an isolated smooth limit interface has multiplicity one whenever it is a closed, embedded, two-sided, strictly \(F\)-stable \(F\)-minimal hypersurface. The proof uses anisotropic barriers and a sliding argument to confine the nodal sets to a neighborhood of width \(O(\varepsilon)\). We then classify the nonconstant rescaled limits as one-dimensional solutions and obtain an energy upper bound near the limiting hypersurface. Exponential decay controls the remaining energy, yielding the multiplicity-one conclusion. 

\end{abstract}

\maketitle

 \section{Introduction and Main Results}



The anisotropic Allen-Cahn equation describes phase transitions with
direction-dependent surface energy. The corresponding minimal hypersurfaces are limits of nodal sets of
solutions, whose energy concentrates on each connected component of the hypersurface as an integer
multiple of its area.  In this paper, we study the multiplicity of smooth limit interfaces.

 Let $(M^{n},g)$ be a closed Riemannian manifold with $n\geq2$. We consider a smooth even function:
\begin{equation}\label{e1.4}F : UT(M) \to (0, \infty), \qquad F(x, v) = F(x, -v),\end{equation}
defined on the unit tangent bundle, and extend \(F\) 1-homogeneously to a function that belongs to $C^4(TM\setminus\{0\})$:
\begin{equation}\label{e1.5}F : TM \to [0, \infty), \qquad F(x, \lambda v) = \lambda F(x, v)\text{ for all }  \lambda \ge 0.\end{equation}
  Set
\begin{equation}\label{e1.5}
 \Phi(x,\xi)=\frac{1}{2}F(x,\xi)^{2},\qquad A(x,\xi)=D_{\xi}\Phi(x,\xi),
\end{equation}
where $A(x,0)=0$. It follows from the homogeneity of $F$ that
\begin{equation}\label{e2.1}
A(x,\xi)\cdot\xi=F(x,\xi)^{2},
\qquad A(x,t\xi)=tA(x,\xi)\ \forall t\in\R.
\end{equation}
 We assume that there are constants $0<\lambda\leq\Lambda$ satisfy
\begin{equation}\label{e1.6}
 \lambda|\eta|_{g}^{2}\leq D^{2}_{\xi\xi}\Phi(x,\xi)[\eta,\eta]\leq\Lambda|\eta|_{g}^{2}
\end{equation}
for every $x\in M$, $\xi\neq0$ and $\eta\in T_{x}M$. Differentiating the Euler identity gives
\[
 D_{\xi\xi}^{2}\Phi(x,\xi)[\xi,\xi]=2\Phi(x,\xi)=F(x,\xi)^{2},
\]
which implies that
\begin{equation}\label{e2.2}
 \lambda|\xi|_{g}^{2}\leq F(x,\xi)^{2}\leq\Lambda|\xi|_{g}^{2}.
\end{equation}
The   anisotropic Allen-Cahn energy of a function \(u : M \to \mathbb{R}\) is
\begin{equation}\label{e1.1}
 E_{\varepsilon,F}(u;M)=\int_{M}\left(\frac{\varepsilon}{2}F(x,\nabla u)^{2}+\frac{W(u)}{\varepsilon}\right)\,\mathrm{d}V_{g},
\end{equation}
where
\begin{equation}\label{e1.2}
 W(s)=\frac{1}{4}(1-s^{2})^{2}.
\end{equation}
Its Euler-Lagrange equation is
\begin{equation}\label{e1.3}
 -\varepsilon^{2}\operatorname{div}_{g} A(x,\nabla u)+W'(u)=0.
\end{equation}
When $F(x,\xi)=|\xi|_{g}$, equation \eqref{e1.3}  reduces to the classical Allen-Cahn equation.

The anisotropic surface area of an embedded hypersurface \(\Sigma  \subset M^n\) is defined by
\[\mathcal{A}_F(\Sigma) := \int_{\Sigma} F(x, \nu_\Sigma) \, d\mathcal H_g^{n-1}.\]
 A critical point of \(\AF\) is called an \(F\)-minimal
hypersurface. The relation between the diffuse energy $E_{\varepsilon,F}$ and the surface
energy $\mathcal{A}_F$ can be expressed in terms of \(\Gamma\)-convergence.
Bouchitt\'e~\cite{Bouchitte1990} established this variational limit
in a general class of inhomogeneous anisotropic two-phase models.
With the normalization in \eqref{e1.1}, the limiting
surface energy is \(\sigma_W\AF\), where
\[
 \sigma_W=\int_{-1}^{1}\sqrt{2W(s)}\,ds.
\]
Cicalese, Nagase and Pisante~\cite{CicaleseNagasePisante2010}
proved the Gibbs-Thomson relation for inhomogeneous anisotropic
phase transitions in a mass-constrained minimization problem, relating
the limiting chemical potential to the anisotropic mean curvature
of the interface.

For general critical points, to understand the interface limit, one also needs control over both energy concentration and the geometry of the limiting measure.
In particular, \(\Gamma\)-convergence alone does not determine the
multiplicity of a limit interface. In the isotropic setting,
Hutchinson and Tonegawa~\cite{HutchinsonTonegawa2000} established
convergence to stationary integral varifolds, while Tonegawa and
Wickramasekera~\cite{TonegawaWickramasekera2012} proved regularity
of the limit under a stability assumption. Building on these results,
Guaraco \cite{Guaraco2018} constructed closed minimal
hypersurfaces from min-max critical points of the Allen-Cahn energy.

Min-max methods have also been developed for anisotropic surface
energies. De Philippis and De Rosa~\cite{DePhilippisDeRosa2024}
constructed anisotropic minimal and constant anisotropic mean
curvature surfaces in closed three-dimensional Riemannian manifolds,
smooth away from at most one point. De Philippis, De Rosa and
Li~\cite{DePhilippisDeRosaLi2024} subsequently established existence
and regularity of anisotropic min-max hypersurfaces in closed
manifolds of   dimension $n\geq3$. In particular, their
result gives a smooth anisotropic minimal surface in every closed
three-dimensional manifold.

A phase-transition approach to this construction was developed by
De Rosa and Pigati~\cite{DeRosaPigati2025}. They proved a Modica-type
gradient estimate for the anisotropic Allen-Cahn equation and used
estimates for stable solutions to obtain concentration on
\(F\)-stationary integral varifolds for sequences with bounded energy
and a uniform Morse index bound.
This yields an anisotropic min-max construction through the
Allen-Cahn energy and leads to the question of the multiplicity
of the resulting interfaces.

The multiplicity represents the number of transition layers that accumulate on the same hypersurface.  Near a smooth connected
limit interface \(\Gamma\), we consider energy convergence of the form
\begin{equation}\label{e1.11}
 \left(\frac{\varepsilon_{j}}{2}F(x,\nabla u_{j})^{2}+\frac{W(u_{j})}{\varepsilon_{j}}\right)\mathrm{d}V_{g}
 \rightharpoonup
 m\sigma_{W}F(x,\nu_{\Gamma})\mathcal{H}^{n-1}_{g}\llcorner\Gamma,  \qquad m\in\mathbb N_{>0}.
\end{equation}

Although the nodal sets converge to the same $\Gamma$ in the Hausdorff sense, this convergence alone does not distinguish a single transition layer from multiple layers accumulating on the same hypersurface. Thus, even when the interface and energy
convergence are given, it remains to determine geometric conditions
which force \(m=1\).

In the isotropic case, Chodosh and
Mantoulidis~\cite{ChodoshMantoulidis2020} proved curvature and sheet
separation estimates on three-manifolds. For generic metrics, they
obtained multiplicity one for limits of Allen-Cahn solutions with
uniformly bounded energy and Morse index. Guaraco, Marques and
Neves~\cite{Guaraco2022} proved a local multiplicity-one theorem in
arbitrary dimensions under strict stability of the limit interface,
without stability or index assumptions on the approximating solutions.
Here we extend this strict-stability result to anisotropic energies
whose integrands may depend on position.

We recall the definition of stability before stating our main result.
Let \(\Gamma\) be a smooth, closed, two-sided \(F\)-minimal
hypersurface, and fix a unit normal \(\nu_\Gamma\). For
\(\varphi\in C^\infty(\Gamma)\), let
\[
 \Gamma_t^\varphi
 =\bigl\{\exp_x(t\varphi(x)\nu_\Gamma(x)):x\in\Gamma\bigr\},
 \qquad
 Q_F(\varphi)
 =\left.\frac{d^2}{dt^2}\right|_{t=0}\AF(\Gamma_t^\varphi).
\]
We say that \(\Gamma\) is \emph{\(F\)-stable} if
\(Q_F(\varphi)\ge0\) for every \(\varphi\), and
\emph{strictly \(F\)-stable} if \(Q_F(\varphi)>0\) for every
 \(\varphi\neq0\). Under the ellipticity assumption above, strict
stability on the closed hypersurface is equivalent to positivity
of the first eigenvalue of the self-adjoint operator associated
with \(Q_F\).

Write
\[
 N_r(\Gamma)=\{x\in M:\distg(x,\Gamma)<r\}.
\]
Our main result is the following.

\begin{theorem}\label{thm:main}
Let \(\Omega\subset M\) be open, let \(\varepsilon_j\to0\), and
let \(u_j\) be weak solutions of \eqref{e1.3} in
\(\Omega\) with \(\varepsilon=\varepsilon_j\) and \(|u_j|\le1\).
Suppose that \(\Gamma\subset\subset\Omega\) is a smooth, closed, connected,
embedded, two-sided \(F\)-minimal hypersurface. Assume that, for
some sufficiently small \(\rho>0\) with
\(\overline{N_{2\rho}(\Gamma)}\subset\Omega\),
\begin{equation}\label{e1.10}
 \{u_j=0\}\cap N_{2\rho}(\Gamma)\longrightarrow\Gamma
\end{equation}
in the Hausdorff distance, and that
\eqref{e1.11} holds as Radon measures in
\(N_{2\rho}(\Gamma)\). If \(\Gamma\) is strictly \(F\)-stable,
then \(m=1\).
\end{theorem}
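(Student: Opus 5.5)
The plan follows the strategy of Guaraco--Marques--Neves, adapted to the anisotropic operator. We work in Fermi-type coordinates $(y,t)\in\Gamma\times(-2\rho,2\rho)$, $x=\exp_y(t\nu_\Gamma(y))$.

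\textbf{Step 1: barriers.} Strict $F$-stability gives a first eigenfunction $\phi_1>0$ of the Jacobi operator $L_F$ of $Q_F$, with eigenvalue $\mu_1>0$. Hence the leaves $\Gamma_s=\{t=s\phi_1(y)\}$ have anisotropic mean curvature $H_F(\Gamma_s)=-s\mu_1\phi_1+O(s^2)$, pointing away from $\Gamma$ on both sides. This is a mean-convex foliation of $N_{\rho}(\Gamma)\setminus\Gamma$.

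For each small $s$ we build a sub/supersolution of \eqref{e1.3} of the form
\[
v_{s,\varepsilon}(x)=\mathbb H\Bigl(\frac{d_{F,s}(x)-\delta(\varepsilon)}{\varepsilon}\Bigr)\pm O(\varepsilon^{2}),
\]
where $\mathbb H$ is the one-dimensional heteroclinic ($\mathbb H'=\sqrt{2W(\mathbb H)}$) and $d_{F,s}$ is the signed anisotropic distance to $\Gamma_s$. We use that $F(x,\nabla d_F)=1$ for the anisotropic distance, so that $\operatorname{div}A(x,\nabla d_F)$ equals the anisotropic mean curvature of the level sets. Inserting $v$ into \eqref{e1.3}, the leading terms cancel. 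The sign of $H_F$, of order $s$, then dominates the $O(\varepsilon)$ error, provided $s\ge C\varepsilon$; a small correction $\varepsilon\,\psi(d/\varepsilon)$ handles cut-off near $t=\pm\rho$.

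\textbf{Step 2: sliding.} By \eqref{e1.10} and $|u_j|\le 1$, together with standard decay away from the nodal set, $u_j$ is close to $\pm1$ near $\partial N_\rho(\Gamma)$ with a fixed sign on each side. Two-sidedness and the Hausdorff convergence are what give this sign structure; the case where $u_j$ has the same sign on both sides must be handled by the same barrier applied from both sides.

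Start with $s=\rho/2$, where the barrier lies strictly below $u_j$ on the positive side. Decrease $s$ continuously. The strong maximum principle for the quasilinear uniformly elliptic operator, valid thanks to \eqref{e1.6} and the $C^4$ regularity of $F$, forbids a first interior touching as long as $s\ge C\varepsilon_j$. Doing the same from the other side confines $\{u_j=0\}\cap N_\rho(\Gamma)$ to $N_{C\varepsilon_j}(\Gamma)$, and gives $u_j$ exponentially close to $\pm1$ outside $N_{K\varepsilon_j}$.

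\textbf{Step 3: blow-up and energy.} Rescale around $y_j\in\Gamma$ by $\varepsilon_j$. The functions $\tilde u_j(z)=u_j(\exp_{y_j}(\varepsilon_j z))$ converge in $C^{1,\alpha}_{loc}$ to a solution of $\operatorname{div}A(y_\infty,\nabla\tilde u)=W'(\tilde u)$ on $\R^n$, a constant-coefficient anisotropic equation. By Step 2, its nodal set lies in a slab $|z\cdot\nu|\le C$.

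A solution with nodal set in a slab and limits $\pm1$ at $\pm\infty$ is one-dimensional. To see this, apply the sliding method in tangential directions (Berestycki--Caffarelli--Nirenberg type) to get monotonicity in $\nu$ and invariance under tangential translations. It then equals $\mathbb H\bigl((z\cdot\nu-a)/F(y_\infty,\nu)\bigr)$, or is constant if the signs on both sides agree. The energy of this profile per unit area equals $\sigma_W F(y_\infty,\nu)$.

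Combining the local $C^1$ convergence on $N_{K\varepsilon_j}$ with exponential decay of $\varepsilon F^2|\nabla u|^2+W(u)/\varepsilon$ outside $N_{K\varepsilon_j}$ gives
\[
\limsup_j E_{\varepsilon_j,F}(u_j;N_\rho(\Gamma))\le \sigma_W\mathcal A_F(\Gamma)+o_K(1).
\]
Comparing with \eqref{e1.11} yields $m\le 1$, so $m=1$ since $m\ge1$.

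\textbf{Main obstacle.} I expect the hardest step to be Step 1, namely getting the barrier error genuinely $O(\varepsilon)$ uniformly. The difficulty is that the anisotropic distance and $x$-dependence of $F$ produce extra terms ($D_xA$ and curvature of the Finsler metric). I would handle these by using the $F$-normal $\nabla_\xi F(x,\nu)$ flow instead of the Riemannian normal exponential, so that $F(x,\nabla d)=1$ holds exactly.
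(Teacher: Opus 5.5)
Your outline follows the paper's architecture closely: principal-eigenfunction foliation, eikonal anisotropic distance $F(x,\nabla d_s)=1$ from the $F$-normal flow, heteroclinic barrier with a small downward shift for $s\ge K\varepsilon$, sliding in $s$, blow-up to the frozen equation, slab classification, and an exponential tail. The gap is in Step 3, in the clause ``or is constant if the signs on both sides agree,'' which you assert without proof. This clause carries the whole multiplicity-one content.

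If $u_j$ has the same sign on both sides of $\Gamma$, the blow-up could a priori be a two-layer profile ($1\to-1\to1$ across the slab). It would carry energy close to $2\sigma_W F(p,\nu_\Gamma(p))$ per unit area. This is the basic $m=2$ scenario, and it would destroy the bound $\limsup_j E_{\varepsilon_j,F}(u_j;N_\rho(\Gamma))\le\sigma_W\mathcal A_F(\Gamma)$. Nothing in your plan excludes it:
\begin{itemize}
\item \textbf{The barriers of Step 2 do not.} If $u_j>0$ on both annuli, the barriers built on $\Gamma_{K\varepsilon_j}$ and on $\Gamma_{-K\varepsilon_j}$ are both subsolutions. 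They give only two lower bounds for $u_j$, so $u_j$ may still become negative in the strip between $\Gamma_{-K\varepsilon_j}$ and $\Gamma_{K\varepsilon_j}$. Applying the barrier ``from both sides'' gives trapping, not exclusion.
\item \textbf{The tangential BCN sliding does not either.} If $\sigma_+=\sigma_-=1$ and $v(z_0)=0$, then $v(z_0-he)\to1$ as $h\to\infty$ for every $e$ with $e\cdot\nu>0$. So $v(x+he)\ge v(x)$ fails at $x=z_0-he$: the monotonicity in $\nu$ that the sliding is meant to produce is false here, and the sliding never starts.
\end{itemize}

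What is missing is a separate rigidity lemma. A bounded entire solution of $\operatorname{div}A_0(\nabla v)=W'(v)$ whose nodal set lies in $\{|x_n|\le L\}$ and which tends to $1$ at both ends, uniformly in $x'$, must be identically $1$. The paper proves this (Lemma~\ref{lem:opposite-ends}, with Lemma~\ref{lem:no-zero}) by sliding the exact one-dimensional solution $U_a(x)=U\bigl((x_n-a)/F_0(e_n)\bigr)$ from below in the normal direction:
\begin{itemize}
\item First, $\inf_{\{x_n\le M\}}v>-1$ by the strong maximum principle and the end limits. Hence $U_a\le v$ on $\{x_n\le M\}$ for $a$ large.
\item On $\{x_n>M\}\cap\{v<U_a\}$ both values lie in $[1-\delta,1]$, where $W''\ge\kappa>0$. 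The half-space comparison principle then gives $U_a\le v$ everywhere.
\item Set $a_*=\inf\{a:U_a\le v\}$. If $a_*=-\infty$, then $v\equiv1$.
\item If $a_*$ is finite and a touching sequence exists in $\{x_n\le M\}$, translation and compactness give a limit $\bar v\equiv U_{a_*-b}$. This is impossible, since $\bar v\to1$ but $U_{a_*-b}\to-1$ as $x_n\to-\infty$.
\item Otherwise $v-U_{a_*}$ has a positive gap on $\{x_n\le M\}$, so $a_*$ can be lowered, contradicting its definition.
\end{itemize}
You need this lemma in Step 3, before the tangential sliding, to reduce to $\sigma_+=-\sigma_-$. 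With it added, your route coincides with the paper's.
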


The theorem applies to each connected component satisfying the above
assumptions in every dimension \(n\ge2\).
The stability assumption concerns the limiting hypersurface;
no stability or Morse index bound is imposed on \(u_j\).
For global weak solutions on the closed manifold, the bound
\(|u_j|\le1\) follows from the equation, while in the local statement it is assumed explicitly.

Using strict \(F\)-stability and the second variation of
\(\AF\), we construct a foliation near \(\Gamma\) with
the required sign of anisotropic mean curvature.
We construct barriers from the associated anisotropic
signed distance functions \(d_s\), which satisfy
\(F(x,\nabla d_s)=1\).
To compare these barriers with the solutions, we use
a finite-difference argument that does not require
\(A(x,\xi)\) to be differentiable at \(\xi=0\).

Strict \(F\)-stability first gives foliations on both sides of
\(\Gamma\), with the required sign of anisotropic mean curvature.
The associated barriers restrict the nodal sets to an
\(O(\varepsilon_j)\) tubular neighborhood.  Rescaling by
\(\varepsilon_j\)  around points \(p_j\in\Gamma\).
After passing to a subsequence with \(p_j\to p\),
the rescaled solutions converge locally to a solution of
\[
-\operatorname{div} A(p,\nabla v)+W'(v)=0
\qquad\text{in }\mathbb R^n.
\]
In this limiting equation, the vector field \(A(p,\xi)\)
depends only on \(\xi\).
The preceding estimate on the nodal sets implies that
the zero set of \(v\) lies between two parallel
hyperplanes.

A sliding argument shows that every nonconstant limit
is a one-dimensional solution connecting \(-1\) and \(1\).
We use this classification to estimate the energy in
\(N_{R\varepsilon_j}(\Gamma)\). The energy in the remaining
region \(N_\rho(\Gamma)\setminus N_{R\varepsilon_j}(\Gamma)\)
is estimated using exponential decay.
Combining these estimates and letting \(R\to\infty\),
we obtain
\[
\limsup_{j\to\infty}
E_{\varepsilon_j,F}(u_j;N_\rho(\Gamma))
\le \sigma_W\AF(\Gamma).
\]
Comparison with \eqref{e1.11} gives \(m\le1\),
and hence \(m=1\).

\medskip
This paper is organized as follows.
In Section~2, we establish comparison principles and obtain
regularity and decay estimates for solutions of the anisotropic
Allen-Cahn equation.
In Section~3, we use strict \(F\)-stability to construct suitable
barrier functions and show that the nodal sets lie in an
\(O(\varepsilon_j)\) tubular neighborhood of \(\Gamma\).
In Section~4, we prove that
bounded entire solutions whose zero sets are contained between
two parallel hyperplanes are one-dimensional.
Finally, in Section~5, we apply this result to the rescaled
solutions and combine the resulting energy estimate with
exponential decay to complete the proof of
Theorem~\ref{thm:main}.

\section{preliminaries}
\begin{lemma}\label{lem:flux}
For $x\in M$ and $p,q\in T_{x}M$, set
\[
 \mathcal{B}_{\Phi}(x;p,q)
 =
 \Phi(x,p)-\Phi(x,q)-A(x,q)\cdot(p-q).
\]
Then, we have
\begin{equation}\label{eq:bregman}
 \frac{\lambda}{2}|p-q|_{g}^{2}
 \leq
 \mathcal{B}_{\Phi}(x;p,q)
 \leq
 \frac{\Lambda}{2}|p-q|_{g}^{2},
\end{equation}
and
\begin{equation} \label{e2.3}
\bigl(A(x,p)-A(x,q)\bigr)\cdot(p-q)\geq\lambda|p-q|_{g}^{2},\qquad
 |A(x,p)-A(x,q)|_{g}\leq\Lambda|p-q|_{g},
\end{equation}
where $\lambda$ and $\Lambda$ are constants satisfy \eqref{e1.6}.
Moreover, there is a symmetric measurable matrix $B(x,p,q)$ such that
\begin{equation}\label{e2.5}
 A(x,p)-A(x,q)=B(x,p,q)(p-q),
 \qquad
 \lambda I\leq B(x,p,q)\leq\Lambda I.
\end{equation}
 Finally, in local coordinates,  there exists a constant $C$ such that
\begin{equation}\label{eq:base-A}
 |A(x,p)-A(y,p)|\leq C|x-y||p|.
\end{equation}
\end{lemma}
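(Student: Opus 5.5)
The plan is to reduce everything to the segment $\gamma(t)=q+t(p-q)$, $t\in[0,1]$, in the fixed fiber $T_xM$, and to integrate the Hessian bounds \eqref{e1.6} along it. The one subtlety is that $\Phi(x,\cdot)$ is only $C^2$ on $T_xM\setminus\{0\}$: $F\in C^4(TM\setminus\{0\})$ gives $\Phi\in C^4$ away from the zero section, while at $\xi=0$ the function $\Phi$ is merely $C^{1,1}$, since $A$ is $1$-homogeneous and hence Lipschitz. So the first step is to check that integration along $\gamma$ is legitimate even when $\gamma$ meets the origin. Because $\gamma$ is a line, it meets $0$ in at most one parameter $t_0$, unless $p=q=0$, which is trivial.

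\textbf{Step 1: the second and third estimates in \eqref{e2.3}, and \eqref{e2.5}.} The map $t\mapsto A(x,\gamma(t))$ is $C^1$ on $[0,1]\setminus\{t_0\}$ and continuous on $[0,1]$, since $A(x,0)=0$ and $|A(x,\xi)|\le C|\xi|$ by homogeneity. It is therefore absolutely continuous, and
\[
A(x,p)-A(x,q)=\int_0^1 D^2_{\xi\xi}\Phi(x,\gamma(t))\,dt\,(p-q)=:B(x,p,q)(p-q).
\]
The integrand is defined for a.e.\ $t$ and is symmetric with eigenvalues in $[\lambda,\Lambda]$ with respect to $g$ by \eqref{e1.6}, so the averaged matrix $B$ satisfies $\lambda I\le B\le\Lambda I$. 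It is measurable in $(x,p,q)$ as an integral of a measurable function. This gives \eqref{e2.5}. Pairing with $p-q$ gives the monotonicity inequality, and the operator-norm bound $|B|\le\Lambda$ gives the Lipschitz bound.

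\textbf{Step 2: \eqref{eq:bregman}.} Set $h(t)=\Phi(x,\gamma(t))$. Then $h\in C^1[0,1]$ with $h'(t)=A(x,\gamma(t))\cdot(p-q)$, and $h'$ is absolutely continuous by Step 1. Hence
\[
\mathcal B_\Phi=h(1)-h(0)-h'(0)=\int_0^1(1-t)\,D^2_{\xi\xi}\Phi(x,\gamma(t))[p-q,p-q]\,dt.
\]
Inserting \eqref{e1.6} and using $\int_0^1(1-t)\,dt=\tfrac12$ yields both bounds. Alternatively, one can write $\mathcal B_\Phi=\int_0^1\bigl(A(x,\gamma(t))-A(x,q)\bigr)\cdot(p-q)\,dt$ and apply Step 1 to the pair $\gamma(t),q$, whose difference is $t(p-q)$.

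\textbf{Step 3: \eqref{eq:base-A}.} Work in a coordinate chart over a compact set, identifying fibers with $\mathbb R^n$. For $p\ne0$, homogeneity gives $A(x,p)=|p|\,A(x,p/|p|)$. The map $(x,\omega)\mapsto A(x,\omega)$ is $C^3$ on the compact set $\{|\omega|=1\}$ over the chart, so it is Lipschitz in $x$ uniformly in $\omega$. Hence $|A(x,p)-A(y,p)|\le C|x-y||p|$, and the case $p=0$ is trivial. Here $g$-norms and Euclidean norms are comparable on compact charts, so constants change harmlessly.

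The main obstacle is the regularity issue at $\xi=0$ in Steps 1--2. The key point is that the segment hits the singular point at most once, and that continuity of $A$ there, a consequence of homogeneity, makes the fundamental theorem of calculus valid. Everything else is routine.
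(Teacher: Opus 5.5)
Your proposal is correct and follows essentially the same route as the paper: you integrate along the segment $\gamma(t)=q+t(p-q)$ and take $B(x,p,q)=\int_0^1 D^2_{\xi\xi}\Phi(x,\gamma(t))\,dt$. You then get the Bregman bounds from the Taylor formula with weight $(1-t)$, and \eqref{eq:base-A} from one-homogeneity plus compactness of the unit sphere bundle over a coordinate ball. One point should be made explicit: absolute continuity of $t\mapsto A(x,\gamma(t))$ needs the uniform bound on $D^2_{\xi\xi}\Phi$ from \eqref{e1.6}, which together with continuity at the origin makes the map Lipschitz. Being $C^1$ off $t_0$ and continuous does not give it by itself, and the paper argues it exactly this way.
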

\begin{proof}
Let $\gamma(t)=q+t(p-q)$.   The uniform Hessian bound in \eqref{e1.6} and the continuity of
\(A(x,\cdot)\) at the origin imply that
\(t\mapsto A(x,\gamma(t))\) is Lipschitz, hence absolutely continuous,
on \([0,1]\). For the integrals below, we set
\(D_{\xi\xi}^2\Phi(x,0)=\lambda I\) by convention. Therefore, we have
\[
\Phi(x,p)-\Phi(x,q)
=\int_0^1 \frac{d}{dt}\Phi(x,\gamma(t))\,dt
=\int_0^1 A(x,\gamma(t)) (p-q)\,dt,
\]
which implies that
\[\begin{split}
\mathcal B_\Phi(x;p,q)&=\int_0^1 \bigl[A(x,\gamma(t))-A(x,q)\bigr] (p-q)\,dt\\
&=\int_0^1 \left(\int_0^t \frac{d}{ds}A(x,\gamma(s))\,ds\right) (p-q)\,dt\\
&=\int_0^1  \left(\int_0^t D_{\xi\xi}^2\Phi(x,\gamma(s))(p-q)\,ds\right) (p-q)\,dt\\
&=\int_0^1(1-s)D_{\xi\xi}^2\Phi(x,\gamma(s))[p-q,p-q]\,ds,
\end{split}\]
where we used the Fubini theorem in the last equality.
Then, a direct calculation using \eqref{e1.6} gives \eqref{eq:bregman}.  Similarly,
\[
 A(x,p)-A(x,q)=\int_{0}^1\frac{d}{dt}A(x,\gamma(t))
 =
 \int_{0}^{1}
 D_{\xi\xi}^{2}\Phi(x,\gamma(t)) (p-q)\,\mathrm{d}t.
\]
Therefore, the lower bound and the  upper bound  in \eqref{e1.6} gives \eqref{e2.3}. Equivalently, we can write
\[A(x,p)-A(x,q)=B(x,p,q)(p-q),\]
where $B(x,p,q)=\int_{0}^{1} D_{\xi\xi}^{2}\Phi(x,\gamma(t))\,\mathrm{d}t$. From \eqref{e1.6} we deduce that $B(x,p,q)$ is a symmetric measurable matrix satisfying
\[\lambda I\leq B(x,p,q)\leq\Lambda I.\]
Finally, let \(x,y\) lie in a fixed relatively compact
coordinate ball and set \(\eta(t)=x+t(y-x)\)
in these coordinates.
Since \(A\) is one-homogeneous in \(p\) and smooth
for \(p\ne0\), compactness gives
\[
 |\partial_x A(x,p)|
 =
 |p|\,|\partial_x A(x,p/|p|)|
 \le C|p|,
 \qquad p\ne0.
\]
Therefore, we obtain
\[
|A(y,p)-A(x,p)|
\le\int_0^1
|\nabla_x A(\eta(t), p)|\,|y-x|dt
\le
C |p|\,|y-x|.
\]
The estimate is immediate when \(p=0\).
\end{proof}

\begin{lemma}\label{lem:range}
Every global weak solution $u\in H^1(M)\cap L^4(M)$ of \eqref{e1.3} satisfies
\begin{equation}\label{e2.7}
 |u|\leq1\qquad\text{a.e. on }M.
\end{equation}
\end{lemma}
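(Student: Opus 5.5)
We test the weak form of \eqref{e1.3} against truncations of $u$ above $1$ and below $-1$, and use monotonicity of $W'$ outside $[-1,1]$. The weak formulation reads
\[
 \varepsilon^{2}\int_{M}A(x,\nabla u)\cdot\nabla\varphi\,\mathrm{d}V_{g}+\int_{M}W'(u)\varphi\,\mathrm{d}V_{g}=0
\]
for all admissible $\varphi$. Since $u\in H^1(M)\cap L^4(M)$, we have $W'(u)=u^3-u\in L^{4/3}(M)$, so every $\varphi\in H^1(M)\cap L^4(M)$ is admissible by H\"older's inequality. This follows by density from smooth test functions, using $|A(x,\nabla u)|\le\Lambda|\nabla u|\in L^2$ from \eqref{e2.3} with $q=0$.

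The test function is $\varphi=(u-1)^{+}$. It lies in $H^1(M)\cap L^4(M)$ because $0\le\varphi\le|u|+1$ and $M$ is compact. Moreover, $\nabla\varphi=\chi_{\{u>1\}}\nabla u$ a.e. Inserting $\varphi$ gives
\[
 \varepsilon^{2}\int_{\{u>1\}}A(x,\nabla u)\cdot\nabla u\,\mathrm{d}V_{g}+\int_{\{u>1\}}(u^{3}-u)(u-1)\,\mathrm{d}V_{g}=0.
\]
By \eqref{e2.1} and \eqref{e2.2}, the first integrand equals $F(x,\nabla u)^2\ge\lambda|\nabla u|_g^2\ge0$. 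The second integrand equals $u(u+1)(u-1)^2$, which is strictly positive on $\{u>1\}$. Both terms are therefore nonnegative and sum to zero, so $\int_{\{u>1\}}u(u+1)(u-1)^2=0$, which forces $|\{u>1\}|=0$.

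Symmetrically, we test with $\varphi=-(u+1)^{-}=\min(u+1,0)$, for which $\nabla\varphi=\chi_{\{u<-1\}}\nabla u$. The gradient term is again $F^2\ge0$ on $\{u<-1\}$. The potential term is $(u^3-u)(u+1)=u(u-1)(u+1)^2$, which is positive on $\{u<-1\}$ since there $u<0$ and $u-1<0$. Hence $|\{u<-1\}|=0$, which gives \eqref{e2.7}.

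The only delicate point is admissibility: justifying that truncations are legitimate test functions, given that the nonlinearity is cubic and $A$ is merely Lipschitz at $\xi=0$. This is handled by the $L^4$ integrability assumption together with the standard chain rule for Lipschitz truncations in $H^1$. Differentiability of $A$ is never needed, only the Euler identity $A(x,\xi)\cdot\xi=F(x,\xi)^2$, which holds also at $\xi=0$ since $A(x,0)=0$.
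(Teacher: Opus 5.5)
Your proposal is correct and matches the paper's proof: you test the weak form with $(u-1)_+$ and $-(u+1)_-$, use the Euler identity $A(x,\nabla u)\cdot\nabla u=F(x,\nabla u)^2\ge 0$, and use the strict positivity of $W'(u)\varphi$ on $\{u>1\}$ and $\{u<-1\}$. Your extra remarks on admissibility ($W'(u)\in L^{4/3}$, density, and the chain rule for truncations) spell out what the paper builds into its definition of weak solution with test functions in $H^1(M)\cap L^4(M)$.
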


\begin{proof}
A function $u\in H^{1}(M)\cap L^{4}(M)$ is a global weak solution of \eqref{e1.3} if
\begin{equation}\label{e2.6}
 \varepsilon^{2}\int_{M}A(x,\nabla u)\cdot\nabla\varphi\,\mathrm{d}V_{g}
 +\int_{M}W'(u)\varphi\,\mathrm{d}V_{g}=0
\end{equation}
for every $\varphi\in H^{1}(M)\cap L^{4}(M)$. In particular, we  take $\varphi=(u-1)_{+}$, then from \eqref{e2.1} we obtain
\[
 0=\varepsilon^{2}\int_{\{u>1\}}F(x,\nabla u)^{2}\,\mathrm{d}V_{g}
 +\int_{\{u>1\}}(u^{3}-u)(u-1)\,\mathrm{d}V_{g}\geq0,
\]
the set \(\{u>1\}\) has measure zero, which implies \(u\le 1\) a.e.\ on \(M\). Similarly, taking the test function $\varphi=-(u+1)_{-}$  gives $u\geq-1$ a.e.\ on \(M\).
\end{proof}

By the standard \(C^{1,\alpha}\) regularity theory for uniformly elliptic quasilinear divergence-form equations (see \cite[Proposition~3.3 and Remark~3.4]{DeRosaPigati2025}), we have the following lemma:

\begin{lemma}\label{lem:compactness}
Let $A_{k}(x,p)=D_{p}\Phi_{k}(x,p)$   satisfy  \eqref{e2.3}, \eqref{e2.5} and \eqref{eq:base-A} and suppose $A_k(x,0)=0$.
 Suppose that $v_{k}$ is a weak solution of
\begin{equation}\label{e2.9}
 -\operatorname{div}\bigl(A_{k}(x,\nabla v_{k})\bigr)+W'(v_{k})=0
\end{equation}
 with \(|v_k|\leq1\) a.e. in $B_{2R}\subset\R^{n}$.  Then there exists $\alpha\in(0,1)$  such that
\begin{equation}\label{e2.10}
 \|v_{k}\|_{C^{1,\alpha}(B_{R})}\leq C_{R},
\end{equation}
where $C_{R}$ is a constant independent of $k$.
If $A_{k}\to A_{\infty}$ locally uniformly,  then, up to a subsequence, $v_k\to v_\infty$ in $C^{1,\alpha'}_{\mathrm{loc}}$ for every $0<\alpha'<\alpha$, where $v_{\infty}$ is a weak solution of the following limiting equation
\[ -\operatorname{div}\bigl(A_{\infty}(x,\nabla v_{\infty})\bigr)+W'(v_{\infty})=0.\]
\end{lemma}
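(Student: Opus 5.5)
The plan is to derive the uniform estimate \eqref{e2.10} from standard quasilinear regularity theory, and then pass to the limit in the weak formulation. \textbf{Step 1 (energy/Caccioppoli bound).} Since \(|v_k|\le1\), the lower-order term satisfies \(|W'(v_k)|\le C\). Test \eqref{e2.9} with \(\eta^2 v_k\), where \(\eta\) is a cutoff supported in \(B_{2R}\). By \eqref{e2.3} with \(q=0\) and \(A_k(x,0)=0\), we have \(A_k(x,p)\cdot p\ge\lambda|p|^2\) and \(|A_k(x,p)|\le\Lambda|p|\). Absorbing the cross terms gives \(\|\nabla v_k\|_{L^2(B_{3R/2})}\le C_R\), with \(C_R\) depending only on \(\lambda,\Lambda,n,R\). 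De Giorgi--Nash--Moser then yields a uniform \(C^{0,\beta}\) bound, though \(|v_k|\le1\) already suffices for what follows.

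\textbf{Step 2 (gradient Hölder estimate).} This is the main step. The structure conditions \eqref{e2.3}, \eqref{e2.5} are exactly uniform ellipticity and Lipschitz growth in \(p\), and \eqref{eq:base-A} gives Lipschitz dependence on \(x\) with linear growth in \(|p|\). These are the hypotheses of the standard \(C^{1,\alpha}\) theory for \(-\operatorname{div}A(x,\nabla v)=f\) with bounded \(f\): Giaquinta--Giusti, Lieberman, or \cite[Proposition~3.3 and Remark~3.4]{DeRosaPigati2025}, which I would invoke directly with \(f=-W'(v_k)\). If a self-contained route is wanted, the argument runs as follows.

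\emph{(a) Higher differentiability.} Use difference quotients \(\Delta_h v_k\) and the matrix representation \eqref{e2.5}. The difference \(A_k(x+he_i,\nabla v_k(x+he_i))-A_k(x,\nabla v_k(x))\) splits as \(B\,(\nabla \Delta_h v_k)\,h\) plus an \(x\)-increment bounded by \(C|h||\nabla v_k|\) via \eqref{eq:base-A}. This gives \(v_k\in H^2_{loc}\) uniformly. Note that this approach avoids differentiating \(A\) at \(p=0\).

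\emph{(b) Linear equation for derivatives.} Each \(w=\partial_i v_k\) then solves a linear divergence equation with bounded measurable coefficients \(B\), of the form \(\operatorname{div}(B\nabla w + g)=\partial_i W'(v_k)\) with \(|g|\le C|\nabla v_k|\). De Giorgi gives a uniform Hölder bound for \(\nabla v_k\), and in particular \(L^\infty\) bounds. A Campanato freezing argument, comparing with the frozen-coefficient problem \(-\operatorname{div}A_k(x_0,\nabla w)=0\), upgrades this to \(C^{1,\alpha}\).

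The only care needed is that every constant depends solely on \(\lambda,\Lambda,n,C,R\) and not on \(k\).

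\textbf{Step 3 (compactness and the limit).} The bound \eqref{e2.10} on each ball, combined with Arzelà--Ascoli and a diagonal argument over \(R\to\infty\) (or on the fixed ball), gives a subsequence with \(v_k\to v_\infty\) in \(C^{1,\alpha'}_{loc}\) for every \(\alpha'<\alpha\). Fix a test function \(\varphi\in C^\infty_c\). Then
\[
|A_k(x,\nabla v_k)-A_\infty(x,\nabla v_\infty)|\le |A_k(x,\nabla v_k)-A_k(x,\nabla v_\infty)|+|A_k(x,\nabla v_\infty)-A_\infty(x,\nabla v_\infty)|.
\]
The first term is at most \(\Lambda|\nabla v_k-\nabla v_\infty|\to0\) uniformly, by \eqref{e2.3}. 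The second term tends to zero by local uniform convergence of \(A_k\) on the compact set \(\{(x,p):|p|\le C_R\}\). Finally, \(W'(v_k)\to W'(v_\infty)\) uniformly. Passing to the limit in the weak form shows that \(v_\infty\) is a weak solution of the limiting equation.
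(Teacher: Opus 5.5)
Your proposal is correct and takes the same route as the paper. The paper gives no argument beyond invoking the standard $C^{1,\alpha}$ theory for uniformly elliptic quasilinear divergence-form equations (De Rosa--Pigati, Proposition~3.3 and Remark~3.4), leaving the Arzel\`a--Ascoli extraction and the limit in the weak formulation implicit; your Step~3 carries both out correctly.

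One point in your optional self-contained sketch needs reordering. De Giorgi applied to $w=\partial_i v_k$ requires the term $g$, with $|g|\le C|\nabla v_k|$, to lie in $L^q$ for some $q>n$. So a uniform $L^\infty$ (or higher-integrability) bound on $\nabla v_k$, e.g.\ via Moser iteration on the differentiated system, must come before the H\"older bound rather than follow from it. Once $\nabla v_k$ is H\"older, the Campanato freezing step is superfluous.
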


\begin{lemma}[Comparison principles]\label{lem:maximum-principles}
Let $B$ be a bounded measurable symmetric matrix with $\lambda I\leq B\leq\Lambda I$, and let $c\in L^{\infty}$. In (i) and (ii), assume
\(w\in H^1_{\mathrm{loc}}(\Omega)\cap C(\Omega)\).
\begin{enumerate}
\item[(i)] If $w\geq0$    and
\[
 -\operatorname{div}(B\nabla w)+cw>0
\]
weakly in a connected open set $\Omega$, then $w>0$ in $\Omega$.
\item[(ii)] If $w\geq0$ satisfies
\[
 -\operatorname{div}(B\nabla w)+cw=0
\]
 in $\Omega$ and vanishes at an interior point, then $w\equiv0$ in $\Omega$.
\item[(iii)] Let \(A_0=D\Phi_0:\mathbb R^n\to\mathbb R^n\)
be autonomous and satisfy \eqref{e2.3} and \eqref{e2.5}.  Let $z_{1},z_{2}$ be bounded entire weak solutions of
\[
 \operatorname{div}A_{0}(\nabla z)-W'(z)=0,
\]
and set $w=z_{1}-z_{2}$.  Suppose that
\[\label{e2.11}
 \frac{W'(z_{1})-W'(z_{2})}{z_{1}-z_{2}}\geq\kappa>0
 \qquad\text{on }\{w<0\},
\]
where the quotient is defined by continuity when the $z_1=z_2$.  Then $w\geq0$ in $\R^{n}$. And the conclusion also
holds in a half-space \(H\), provided that
\eqref{e2.11} holds on \(H\cap\{w<0\}\) and
\(w\geq0\) on \(\partial H\).
\end{enumerate}
\end{lemma}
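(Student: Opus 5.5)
For (i) and (ii) I will reduce to the classical weak Harnack inequality for nonnegative supersolutions of linear uniformly elliptic divergence-form equations with bounded zeroth-order coefficient. The form of that inequality I will use is due to Trudinger and applies to operators $-\operatorname{div}(B\nabla w)+cw$ with measurable $B$ satisfying $\lambda I\le B\le\Lambda I$ and $c\in L^\infty$.

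For (ii), I write $c=c^+-c^-$. Since $w\ge0$, we have $-\operatorname{div}(B\nabla w)+c^+w=c^-w\ge0$, so $w$ is a nonnegative weak supersolution of an equation with nonnegative bounded potential. The weak Harnack inequality then gives, on each ball $B_{2r}(x_0)\subset\subset\Omega$,
\[
r^{-n/p}\|w\|_{L^p(B_{2r}(x_0))}\le C\inf_{B_r(x_0)}w
\]
for some small $p>0$. If $w(x_0)=0$ at an interior point, continuity gives $\inf_{B_r(x_0)}w=0$, so $w\equiv0$ on $B_{2r}(x_0)$. Hence the zero set of $w$ is open. It is also closed by continuity, and $\Omega$ is connected, so $w\equiv0$.

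For (i), the same argument applies to the supersolution, so either $w>0$ everywhere or $w\equiv0$ on $\Omega$. I interpret "$>0$ weakly" as meaning the distribution $-\operatorname{div}(B\nabla w)+cw$ is a nonnegative, nonzero measure. Then $w\equiv0$ would make this distribution vanish, a contradiction, so $w>0$ in $\Omega$.

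For (iii), I set $w=z_1-z_2$ and use Lemma \ref{lem:flux}, \eqref{e2.5}, to write $A_0(\nabla z_1)-A_0(\nabla z_2)=B(x)\nabla w$ with $\lambda I\le B\le\Lambda I$. Then $w$ solves weakly
\[
-\operatorname{div}(B\nabla w)+c(x)w=0,\qquad c=\frac{W'(z_1)-W'(z_2)}{z_1-z_2},
\]
and $c$ is bounded because $z_i$ are bounded and $W'$ is locally Lipschitz. By hypothesis $c\ge\kappa$ on $\{w<0\}$. The difficulty is that the domain is unbounded, so I cannot simply test with $w^-$. Instead I will use a cutoff/Caccioppoli argument that exploits the bound $|w|\le M$.

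Let $\eta_R$ be a cutoff equal to $1$ on $B_R$, supported in $B_{2R}$, with $|\nabla\eta_R|\le 2/R$. Testing the equation with $\varphi=-w^-\eta_R^2$ (admissible, since $w\in H^1_{\mathrm{loc}}$ by Lemma \ref{lem:compactness}) gives
\[
\int \eta_R^2\bigl(B\nabla w^-\cdot\nabla w^-+c\,(w^-)^2\bigr)=-2\int \eta_R w^-B\nabla w^-\cdot\nabla\eta_R .
\]
By Young's inequality the right side is at most $\tfrac{\lambda}{2}\int\eta_R^2|\nabla w^-|^2+C\int (w^-)^2|\nabla\eta_R|^2$. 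Writing $I(R)=\int_{B_R}(w^-)^2$, this yields
\[
\kappa I(R)\le \frac{C}{R^2}I(2R).
\]
Combined with the growth bound $I(2R)\le CM^2R^n$, iteration gives $I(R)\le (C/\kappa R^2)^k I(2^kR)$, i.e. $I(R)\lesssim (C/(\kappa R^2))^k M^2(2^kR)^n$. This is not immediately summable, since the factor $2^{kn}$ competes with $R^{-2k}$. The fix is to choose $R$ large so that $C2^n/(\kappa R^2)<1/2$ and then hold $R$ fixed while iterating. Using $I(2^{j+1}R)\le (C/(\kappa 4^jR^2))\,I(2^{j+2}R)$ at each scale, the product of the ratios decays superexponentially while the growth is only exponential in $k$. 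Letting $k\to\infty$ gives $I(R)=0$, so $w^-\equiv0$.

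An alternative I would keep in reserve is an exponential weight $\eta=e^{-\delta|x|}$ with $\delta^2\Lambda^2/\lambda<\kappa$. Integrability then follows from the boundedness of $w$, the gradient term is absorbed directly, and this gives $\int e^{-2\delta|x|}(w^-)^2=0$.

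For the half-space case the same test functions are admissible: since $w\ge0$ on $\partial H$, the function $w^-\eta^2$ vanishes on $\partial H$ and lies in $H^1_0$ of $H\cap B_{2R}$. The main technical point throughout is justifying this unbounded-domain energy argument; the weighted version is the cleanest way to do it.
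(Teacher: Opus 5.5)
Your proposal is correct. For (i) and (ii) it is the paper's argument. Both reduce to the weak Harnack inequality after removing the negative part of $c$: the paper replaces $c$ by $c+C$ with $C>\|c^-\|_{L^\infty}$, while you move $c^-w\ge0$ to the right-hand side. Your reading of ``$>0$ weakly'' as a nonzero nonnegative measure covers every use in the paper, since the right-hand side there is always a positive function on the relevant ball.

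For (iii) you take a genuinely different route. The paper argues by contradiction through blow-up. It takes a minimizing sequence for $w$, translates $z_1,z_2$, and uses Lemma~\ref{lem:compactness} together with the autonomy of $A_0$ to obtain limit solutions for which $w_\infty$ attains the negative infimum $i$ at the origin. Near $0$ one has $c_\infty\ge\kappa$, so
\[
-\operatorname{div}\bigl(B_\infty\nabla(w_\infty-i)\bigr)+c_\infty(w_\infty-i)=-ic_\infty>0,
\]
and (i) gives a contradiction.

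You instead test with $w^-\eta_R^2$ and iterate over dyadic scales. The factors multiply to
\[
\Bigl(\frac{C}{\kappa R^2}\Bigr)^k2^{-k(k-1)},
\]
which beats the growth $I(2^kR)\le CM^2(2^kR)^n$, so $I(R)=0$. The weight $e^{-\delta|x|}$ does the same in one step, once the non-compactly supported test function is justified by uniform local $H^1$ bounds.

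What each route buys:
\begin{itemize}
\item Your argument is purely linear and integral. It needs neither translation invariance nor $C^{1,\alpha}$ compactness, only $c\ge\kappa$ a.e.\ on $\{w<0\}$ and $w\in L^\infty$.
\item In your version the half-space case reduces to admissibility of $w^-\eta_R^2\in H^1_0(H\cap B_{2R})$. The paper's one-line claim that the boundary condition keeps minimizing sequences away from $\partial H$ tacitly needs a uniform modulus of continuity of $w$ near $\partial H$, which is available from the global gradient bound.
\item The paper's proof reuses the same translation, compactness and strong-maximum-principle template that drives the sliding arguments in Section~\ref{sec:slab}, so it fits that section without introducing integral estimates.
\end{itemize}
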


\begin{proof}
Choose \(C>\|c^-\|_{L^\infty}\). For \(w\ge0\),
\[-\operatorname{div}(B\nabla w)+cw>0
\;\Longrightarrow\;
-\operatorname{div}(B\nabla w)+(c+C)w>0.
\]
Since \(c+C\ge0\),  both (i) and (ii) follow from the weak Harnack inequality and the strong minimum principle.

Now we show (iii). By a direct calculation, we get
\[ \operatorname{div}\left(A_{0}(\nabla z_1)-A_{0}(\nabla z_2)\right)-\left(W'(z_1)-W'(z_2)\right)=0.\]
Using \eqref{e2.5}, there is a symmetric measurable matrix \(B_{12}\) with \(\lambda I\le B_{12}\le \Lambda I\) such that \(A_0(\nabla z_1)-A_0(\nabla z_2)=B_{12}\nabla w\). Consequently,
\[ \operatorname{div}\left(B_{12}\nabla w\right)-\frac{W'(z_1)-W'(z_2)}{z_1-z_2}w=0.\]
Suppose that $i=\inf_{\R^{n}}w<0$, and choose $x_{k}$ with $w(x_{k})\to i$.  Translate both solutions by $x_{k}$ and set $z_1^k(x):=z_1(x+x_k)$, $z_2^k(x):=z_2(x+x_k)$.  By lemma \ref{lem:compactness}, up to a subsequence  still denote by $z_1^k$ and $z_2^k$  that $z_1^k\to\overline{z}_1$ and $z_2^k\to\overline{z}_2$. Let $w_{\infty}(x)=\overline{z}_1-\overline{z}_2$, then we have
\[w_{\infty}(0)=i\quad\text{ and }\quad w_{\infty}(x)\geq i \text{ for all }x\in\mathbb{R}^n.\]
Because \(i<0\), the continuity of \(w_\infty\) gives a neighborhood \(U\ni0\) on which \(w_\infty<0\) and thus
\[
c_\infty:=\frac{W'(\bar z_1)-W'(\bar z_2)}{\bar z_1-\bar z_2}\ge \kappa>0\quad\text{in }U.
\]
Applying \eqref{e2.5} to
\(\nabla\bar z_1\) and \(\nabla\bar z_2\),
we obtain a symmetric measurable matrix \(B_\infty\)
with \(\lambda I\leq B_\infty\leq\Lambda I\) such that
\[
 A_0(\nabla\bar z_1)-A_0(\nabla\bar z_2)
 =B_\infty\nabla w_\infty.
\]
Hence
\[
 -\operatorname{div}(B_\infty\nabla w_\infty)
 +c_\infty w_\infty=0
 \qquad\text{in }U.
\]
Set $z:=w_\infty-i$, then we get
\[
-\operatorname{div}(B_\infty\nabla z)+c_\infty z=-i\,c_\infty>0
\]
in \(U\). Since \(i<0\) and \(c_\infty\ge\kappa>0\). Thus \(z\ge0\) is a strict supersolution that vanishes at the interior point \(0\), contradicting (i).
Therefore \(i\ge0\), i.e. $w=z_1-z_2\ge0$ in $\mathbb R^n$.

By a similar argument, the same conclusion holds in the half-space under the boundary condition $w\geq0$, which prevents minimizing sequences from approaching the boundary.

\end{proof}

Now we claim a nondegeneracy away from the nodal set.

\begin{lemma}\label{lem:nondegeneracy}
There exist constants \(R_0>1\), \(\eta_0\in(0,1)\), and \(\varepsilon_0>0\), uniform in \(p\in M\), such that if \(|u|\leq1\) is a weak solution of \eqref{e1.3} on a neighborhood of $\overline{B_{2R_0\varepsilon}(p)}\subset\subset M$ with \(0<\varepsilon<\varepsilon_0\), and
\[
B_{2R_0\varepsilon}(p)\cap\{u=0\}=\varnothing,
\]
  then \(|u(p)|\ge \eta_0\).
\end{lemma}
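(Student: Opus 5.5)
We argue by contradiction combined with blow-up and compactness. Suppose the statement fails for a given \(R_0\). Then there are points \(p_k\in M\), parameters \(\varepsilon_k\to0\) (or at least \(\varepsilon_k<\varepsilon_0\) with \(\varepsilon_0\) to be shrunk), and weak solutions \(u_k\) with \(|u_k|\le1\) and no zeros in \(B_{2R_0\varepsilon_k}(p_k)\), but with \(|u_k(p_k)|\to\eta<\eta_0\). Once \(R_0\) is fixed, \(\eta_0\) will be chosen small.

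\textbf{Rescaling.} Work in normal coordinates at \(p_k\) and set \(v_k(y)=u_k(\exp_{p_k}(\varepsilon_k y))\) on \(B_{2R_0}\subset\R^n\). Then \(v_k\) solves \(-\operatorname{div}(A_k(y,\nabla v_k))+W'(v_k)=0\) with respect to the pulled-back metric, where \(A_k(y,\xi)=A(\exp_{p_k}(\varepsilon_k y),\xi)\) is corrected by the metric factors \(\sqrt{\det g}\,g^{-1}\), which tend to the identity. Lemma~\ref{lem:flux} shows that \(A_k\) satisfies \eqref{e2.3}, \eqref{e2.5} and \eqref{eq:base-A} uniformly, the last with constant \(C\varepsilon_k\to0\). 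Lemma~\ref{lem:compactness} then gives \(C^{1,\alpha}\) bounds on \(B_{R_0}\). Passing to a subsequence with \(p_k\to p\), we get \(v_k\to v\) in \(C^1(B_{R_0})\), where \(v\) solves the autonomous equation \(-\operatorname{div}A(p,\nabla v)+W'(v)=0\). Moreover \(|v|\le1\), \(|v(0)|=\eta\), and \(v\) has a fixed sign. Say \(v>0\) on \(B_{R_0}\); then \(v\ge0\) in the limit, since \(v_k\) has no zeros and so keeps one sign by connectedness.

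\textbf{Key step.} We must show that a nonnegative solution on \(B_{R_0}\), with \(R_0\) large, cannot be small at the center. We use a sub-solution barrier. The function \(W'(s)=s^3-s\) satisfies \(-W'(s)\ge s/2\) for \(0\le s\le 1/\sqrt2\), so \(v\) is a supersolution of a linear equation with positive zeroth-order term: \(-\operatorname{div}(B\nabla v)-\tfrac12 v\ge0\) wherever \(v\le1/\sqrt2\). Here \(B=B(x,\nabla v,0)\) comes from \eqref{e2.5}, using \(A(p,0)=0\).

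Choose \(R_0\) larger than the radius at which the first Dirichlet eigenvalue of every operator \(-\operatorname{div}(B\nabla\cdot)\) with \(\lambda I\le B\le\Lambda I\) on \(B_{R_0}\) drops below \(1/2\). For instance, \(R_0\gg\sqrt{\Lambda}\) works, by testing the Rayleigh quotient with the first Laplacian eigenfunction.

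A first attempt would be this: if \(\sup_{B_{R_0}}v\) were small, \(v\) would be a positive supersolution of \(-\operatorname{div}(B\nabla w)-\tfrac12 w=0\) on a ball where the first eigenvalue is below \(1/2\), which contradicts the existence of a positive supersolution. However, \(v\) is only small at the center, not on the whole ball. The weak Harnack inequality for the supersolution \(v\ge0\) of \(-\operatorname{div}(B\nabla v)+cv\ge0\), with \(|c|\le1\), bridges this gap. It gives \(\inf_{B_{R_0/2}}v\ge c_H\,\fint_{B_{R_0}} v\), so that actually \(v\) is small in \(L^1\) on a large ball. A more robust route is a Serrin-type sweeping of scaled principal eigenfunctions. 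Let \(\phi\) be the positive first eigenfunction on a ball \(B_{r_*}\), \(r_*\) fixed, with eigenvalue below \(1/2\), for the frozen linearized operator; using the lower bound it can be replaced by a radial explicit subsolution of \(-\operatorname{div}A(p,\nabla\psi)-\tfrac12\psi\le0\). One also checks that the families \(t\psi\) for \(t\le 1/\sqrt2\) are subsolutions of the nonlinear equation, which is exact by the \(1\)-homogeneity of \(A(p,\cdot)\) from \eqref{e2.1}. Cover \(B_{R_0}\) by translates of \(B_{r_*}\) and use strict positivity of \(v\) on each translate from Lemma~\ref{lem:maximum-principles}(i)–(ii). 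Increasing \(t\) from \(0\) along each translate, a first touching with \(t<1/\sqrt2\) is forbidden by the strong comparison of Lemma~\ref{lem:maximum-principles}(ii) applied to \(v-t\psi\). Hence \(v\ge\psi/\sqrt2\) on a ball centered at \(0\), giving \(v(0)\ge\psi(0)/\sqrt2=:2\eta_0\), a contradiction.

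Uniformity in \(p\) follows because all constants depend only on \(\lambda,\Lambda\) and the geometry of \(M\), and the argument is by contradiction along arbitrary sequences \(p_k\). The main obstacle is building a subsolution \(\psi\) for the quasilinear anisotropic operator \(A(p,\cdot)\) uniformly in \(p\), without differentiability of \(A\) at \(0\). We will handle it with the radial eigenfunction of an isotropic operator and the linearization bounds \eqref{e2.5}, and if needed with the finite-difference form of comparison rather than pointwise second derivatives.
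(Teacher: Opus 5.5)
\textbf{There is a genuine gap: you assert strict positivity of the blow-up limit, and that step fails.} Your limit $v$ is only known to satisfy $v\ge0$. Lemma~\ref{lem:maximum-principles}(ii) gives a dichotomy, not positivity: either $v>0$ in $B_{R_0}$ or $v\equiv0$.

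Nothing in your setup excludes $\eta=0$. In that case $v(0)=0$ forces $v\equiv0$. But $v\equiv0$ is a genuine nonnegative solution of the frozen equation $-\operatorname{div}A(p,\nabla v)+W'(v)=0$, so no argument carried out on the limit alone can produce a contradiction:
\begin{itemize}
\item your sweep cannot even start, since $t\psi\le v\equiv0$ fails for every $t>0$;
\item the ``positive supersolution'' eigenvalue argument and the weak Harnack remark also need $v>0$.
\end{itemize}
The hypothesis that actually drives the lemma is $\min_{\overline{B_{R_0}}}v_k>0$, and it is lost under locally uniform convergence. The degenerate scenario ($u$ positive but uniformly small on the $\varepsilon$-scale ball) is precisely what the lemma must rule out. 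When $\eta>0$, your sweep on $v$ is fine, because then $v>0$ on $\overline{B_{r_*}}$.

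\textbf{The fix is to do the comparison on $v_k$, or on $u$ directly, before passing to a limit.} The paper builds the subsolution at scale $\varepsilon$:
\begin{itemize}
\item Take a Dirichlet minimizer $0\le z\le1$ on $B_{R_0}$ of $\int(\Phi(\varepsilon y,\nabla_{g_{\varepsilon,p}}z)+W(z)-W(0))\,dV_{g_{\varepsilon,p}}$. It is nontrivial because choosing $R_0$ large makes its energy at most $-a_0<0$.
\item Apply compactness to these minimizers rather than to $u$. Here the trivial limit is excluded, since $\int z_j^2\to0$ would give the energies a nonnegative $\liminf$, contradicting the bound $-a_0$. This yields $z(0)\ge\eta_0$ uniformly in $p,\varepsilon$.
\item By $1$-homogeneity, $\theta z$ is a strict subsolution for $\theta<1$. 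Sweep $\theta$ up to $1$ below $u$, which is strictly positive on the closed ball; this gives $u(p)\ge z(0)\ge\eta_0$.
\end{itemize}

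Alternatively, you can keep your scheme and split cases.
\begin{itemize}
\item If $\eta>0$, use your sweep on $v$.
\item If $\eta=0$, then $v_k\to0$ uniformly on $B_{R_0}$, so $0<v_k\le1/\sqrt2$ there for large $k$, and $-\operatorname{div}A_k(\nabla v_k)\ge\tfrac12 v_k$. Test this (equivalently, the equation of $u$ on $B_{R_0\varepsilon}(p)$) with $\varphi^2/v_k$, as in the Picone-type computation of Lemma~\ref{lem:no-zero}, using $A(x,\xi)\cdot\eta\le F(x,\xi)F(x,\eta)$. With the metric factors tending to $1$, you get $\int F(\nabla\varphi)^2\ge(\tfrac12-o(1))\int\varphi^2$ for all $\varphi\in C^\infty_c(B_{R_0})$. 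This is false once $\Lambda$ times the first Dirichlet eigenvalue of the Laplacian on $B_{R_0}$ is below $\tfrac12$.
\end{itemize}
Either way, the strict positivity has to be used on $v_k$, not on $v$.
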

\begin{proof}
We may assume that $u>0$ in the ball. By means of the exponential map at \(p\), define $\varphi_\epsilon(y):=\exp_p(\epsilon y)$ for any $y\in B_{2R_0}(0)\subset T_pM$ and set $v(y):=u(\varphi_\epsilon(y))$. Then
\[B_{2R_0}\cap\{v=0\}=\emptyset,\]
and $v(y)$ satisfies
\[
 -\operatorname{div}_{g_{\varepsilon,p}}
 \left(A(\varepsilon y,\nabla_{g_{\varepsilon,p}}v)\right)
 +W'(v)=0.
\]
where $g_{\varepsilon,p}(y)=(\exp^*_p g)(\varepsilon y)$. Since $g_{\varepsilon,p}(y)_{ij} = \delta_{ij} + \frac{1}{3}R_{ikjl}(p)(\varepsilon y^k)(\varepsilon y^l) + O(\varepsilon^3|y|^3)$,  we have
\[
 |g_{\varepsilon,p}(y)_{ij}-\delta_{ij}|
 \le C R_0^2\varepsilon^2,
\]
where \(C\) is independent of \(p\).
Similarly,
\[
 \partial_k g_{\varepsilon,p}(y)_{ij}
 =
 \varepsilon\,(\partial_k g_{ij})(\varepsilon y)
 =
 O(\varepsilon^2|y|),
\]
and hence
\[
 |\partial_k g_{\varepsilon,p}(y)_{ij}|
 \le C R_0\varepsilon^2\to0.
\]
Consequently, \(g_{\varepsilon,p}\to I\) in
\(C^1(B_{R_0})\), uniformly in \(p\), by compactness of \(M\).

Let $\zeta\in C^{\infty}(\mathbb{R}^n)$ such that
\[0\leq\zeta\leq1,\quad \zeta(y)\equiv1\text{ in }B_{R_0/2},\quad \zeta(y)\equiv0\text{ in }B_{R_0}^c\quad\text{and}\quad |\nabla\zeta|\leq C_0/R_0.\]
If we choose $R_0$ large enough such that $R_0> C_0\sqrt{\Lambda2^n}$, then we have
\begin{equation}\label{e3.3}
 \Lambda\int_{B_{R_{0}}}|\nabla\zeta|^{2}dy<\int_{B_{R_{0}}}\zeta^{2}dy.
\end{equation}
Set
\begin{equation}\label{e3.4}
 \mathcal{J}(z)=\int_{B_{R_{0}}}\left(\Phi( \varepsilon y,\nabla_{g_{\varepsilon,p}} z)+W(z)-W(0)\right)\,\mathrm{d}V_{g_{\varepsilon,p}}
\end{equation}
then we get
\[
\begin{split}
 \mathcal J(t\zeta)
 ={}&
 t^2\int_{B_{R_0}}
 \Phi(\varepsilon y,\nabla_{g_{\varepsilon,p}}\zeta)
 \,\mathrm dV_{g_{\varepsilon,p}}
 -\frac{t^2}{2}\int_{B_{R_0}}\zeta^2
 \,\mathrm dV_{g_{\varepsilon,p}}
 +\frac{t^4}{4}\int_{B_{R_0}}\zeta^4
 \,\mathrm dV_{g_{\varepsilon,p}}\\
 \le{}&
 \frac{t^2\Lambda}{2}\int_{B_{R_0}}
 |\nabla_{g_{\varepsilon,p}}\zeta|_{g_{\varepsilon,p}}^2
 \,\mathrm dV_{g_{\varepsilon,p}}
 -\frac{t^2}{2}\int_{B_{R_0}}\zeta^2
 \,\mathrm dV_{g_{\varepsilon,p}}
 +\frac{t^4}{4}\int_{B_{R_0}}\zeta^4
 \,\mathrm dV_{g_{\varepsilon,p}}\\
 \le{}&
 \frac{t^2}{2}
 \left(
 \Lambda\int_{B_{R_0}}|\nabla\zeta|^2\,dy
 -\int_{B_{R_0}}\zeta^2\,dy
 +C\varepsilon^2R_0^{n+2}
 \right)
 +\frac{t^4}{4}\int_{B_{R_0}}\zeta^4
 \,\mathrm dV_{g_{\varepsilon,p}}.
\end{split}
\]
Since the inequality in \eqref{e3.3} is strict,
we may choose \(\varepsilon_0>0\) sufficiently small
and then fix \(t_0\in(0,1)\) sufficiently small such that
\[
 \mathcal J(t_0\zeta)\le-a_0<0
\]
for some \(a_0>0\), uniformly in \(p\in M\) and
\(0<\varepsilon<\varepsilon_0\).

By the direct method, \(\mathcal J\) has a minimizer
\(z\in H^1_0(B_{R_0})\cap L^4(B_{R_0})\).
Since
\[
 \mathcal J(z)\le\mathcal J(t_0\zeta)\le-a_0,
\]
the minimizer is nonzero.
Moreover, \(\mathcal J(|z|)=\mathcal J(z)\), and
truncation at \(1\) does not increase the energy.
Thus we may choose \(0\le z\le1\).

The minimizer satisfies the Euler-Lagrange equation.
By interior regularity and boundary continuity for
the zero Dirichlet problem,
\[
 z\in C^{1,\alpha}_{\mathrm{loc}}(B_{R_0})
       \cap C(\overline{B_{R_0}}),
 \qquad z=0\text{ on }\partial B_{R_0}.
\]
The strong minimum principle then gives
\(z>0\) in \(B_{R_0}\).


We show that \(z(0)\ge\eta_0\) for some constant
\(\eta_0\in(0,1)\), uniformly in \(p,\varepsilon\).
Otherwise, there exist \(p_j\in M\),
\(0<\varepsilon_j<\varepsilon_0\), and corresponding
minimizers \(z_j\) such that \(z_j(0)\to0\).
Let
\[
 g_j=g_{\varepsilon_j,p_j},
\]
and let \(\mathcal J_j\) denote the corresponding functional.
By the local estimates used in
Lemma~\ref{lem:compactness}, a further subsequence satisfies
\[
 z_j\longrightarrow z_\infty
 \quad\text{in }C^{1,\alpha'}_{\mathrm{loc}}(B_{R_0}),
 \qquad 0<\alpha'<\alpha,
\]
where \(z_\infty\) is a nonnegative solution of
the limiting equation and \(z_\infty(0)=0\).
The strong minimum principle gives
\(z_\infty\equiv0\) in \(B_{R_0}\).
For any \(\sigma>0\), choose \(\delta>0\) sufficiently
small that
\[
 \operatorname{Vol}_{g_j}(B_{R_0}^{\delta})<\sigma
 \qquad\text{for all }j,
\]
where
$
 B_{R_0}^{\delta}
 :=
 \left\{
 y\in B_{R_0}:
 \operatorname{dist}(y,\partial B_{R_0})<\delta
 \right\}.
$
Since \(z_j\to0\) locally uniformly and \(0\le z_j\le1\),
we obtain
\[
 \int_{B_{R_0}}z_j^2\,\mathrm dV_{g_j}
 ={}
 \int_{B_{R_0}^{\delta}}z_j^2\,\mathrm dV_{g_j}
 +
 \int_{B_{R_0}\setminus B_{R_0}^{\delta}}
 z_j^2\,\mathrm dV_{g_j}
 \le{}2\sigma
\]
for all sufficiently large \(j\).  Then, the arbitrariness of \(\sigma>0\) implies that $ \int_{B_{R_0}}z_j^2\,\mathrm dV_{g_j}\longrightarrow0.$\\
On the other hand, since  $
 W(z_j)-W(0)
 =-\frac12z_j^2+\frac14z_j^4
 \ge-\frac12z_j^2, $ we have
\[
 \mathcal J_j(z_j)
 \ge-\frac12\int_{B_{R_0}}z_j^2\,\mathrm dV_{g_j}.
\]
Hence $ \liminf_{j\to\infty}\mathcal J_j(z_j)\ge0,$, which is a contradiction with $
 \mathcal J_j(z_j)
 \le\mathcal J_j(t_0\zeta)
 \le-a_0<0.
$
Therefore \(z(0)\ge\eta_0\).


For \(0<\theta<1\), the one-homogeneity of \(A\)
and \(W'(s)=s^3-s\) give
\begin{equation}\label{e3.5}
 \operatorname{div}_{g_{\varepsilon,p}}
 \left(
 A(\varepsilon y,
   \nabla_{g_{\varepsilon,p}}(\theta z))
 \right)
 -W'(\theta z)
 =
 \theta(1-\theta^2)z^3>0
 \qquad\text{in }B_{R_0}.
\end{equation}

Rescaling back, set
\[
 z_\varepsilon(x)
 :=
 z(\varphi_\varepsilon^{-1}(x)),
 \qquad x\in B_{R_0\varepsilon}(p).
\]
Then \(z_\varepsilon\) solves \eqref{e1.3} in
\(B_{R_0\varepsilon}(p)\), vanishes on its boundary,
and satisfies
\[
 z_\varepsilon(p)=z(0)\ge\eta_0.
\]
Since \(u>0\) in \(B_{2R_0\varepsilon}(p)\) and
$
 \overline{B_{R_0\varepsilon}(p)}
 \subset B_{2R_0\varepsilon}(p),
$
we have
$
 m:=\min_{\overline{B_{R_0\varepsilon}}(p)}u>0.
$
 Define
\[S=\{\theta\in(0,1): u-\tau z_{\varepsilon}\geq0\text{ for all }0\leq\tau\leq\theta.\}\]
Obviously, $\theta z<m\leq u$ for $0<\theta<
 \min\left\{
 1,\frac{m}
 {\|z_\varepsilon\|_{L^\infty(B_{R_0\varepsilon}(p))}}
 \right\}$ and thus $S\neq\emptyset$. Set $\theta_0:=\sup S$, we show that $\theta_0=1$. Assume by contradiction that $\theta_0<1$ and set $w:=u-\theta_0 z_\varepsilon$, then $w\geq0$ and
 \[
 -\varepsilon^2\operatorname{div}_g(B\nabla w)+cw
 =
 \theta_0(1-\theta_0^2)z_\varepsilon^3>0
 \qquad\text{in }B_{R_0\varepsilon}(p),
\]
where \(\lambda I\le B\le\Lambda I\) and
$
c=\frac{W'(u)-W'(\theta_0 z_\varepsilon)}{u-\theta_0 z_\varepsilon},
$
with the quotient defined by continuity when \(w=0\).
Since \(z_\varepsilon=0\) on
\(\partial B_{R_0\varepsilon}(p)\), we have
\(w>0\) on \(\partial B_{R_0\varepsilon}(p)\).

If \(w>0\) everywhere on
\(\overline{B_{R_0\varepsilon}(p)}\), then continuity
and compactness give
$
 \delta
 :=
 \min_{\overline{B_{R_0\varepsilon}}(p)}w>0.
$
Set
$
 h=
 \min\left\{
 \frac{\delta}{2},
 \frac{1-\theta_0}{2}
 \right\}>0.
$
Since \(0\le z_\varepsilon\le1\), we have
\[
 u-(\theta_0+h)z_\varepsilon
 =
 w-hz_\varepsilon
 \ge\delta-h
 \ge\frac{\delta}{2}>0.
\]
Thus \(\theta_0+h\in S\), contradicting
the definition of \(\theta_0\).

 Therefore, $w(x_0)=0$ for some $x_0\in B_{R_0\varepsilon}(p)$, which is a contradiction with lemma \ref{lem:maximum-principles} (i).  Hence $u\geq z_\varepsilon$ and $u(p)\geq z_\varepsilon(p)\geq\eta_{0}$.  The negative case follows by replacing $u$ by $-u$.
\end{proof}

\begin{lemma}\label{lem:pure-decay}
Let $u$ be a solution of \eqref{e1.3} with $|u|\leq 1$ and \(0<\varepsilon<\varepsilon_0\). Let $\Omega$ be a connected component of $M\setminus\{u=0\}$  and let $s\in\{-1,1\}$ be the sign of $u$ on $\Omega$.  If $B_{2R_{0}\varepsilon}(x)\subset\Omega$, then there are $C>0$ and $\gamma>0$ such that
\begin{equation}\label{e3.6}
 0\leq1-su(x)\leq C e^{-\frac{\gamma d}{\varepsilon}},
\end{equation}
where $d=\operatorname{dist}(x,\{u=0\})$.
Moreover,   fix \(0<k_3<k_1<k_2\) and let $\Omega_{\varepsilon}:=\{x\in \Omega: \ k_1\varepsilon<d<k_2\varepsilon\}$, then
\begin{equation}\label{e3.7}
 \int_{\Omega_{\varepsilon}}\left(\varepsilon F(x,\nabla u)^{2}+\frac{W(u)}{\varepsilon}\right)\,\mathrm{d}V_{g}
 \leq C\varepsilon^{-1}\int_{\Omega_{\varepsilon}^{*}}(1-su)^{2}\,\mathrm{d}V_{g},
\end{equation}
where $\Omega_{\varepsilon}^{*}:=\{x\in \Omega: \operatorname{dist}(x, \Omega_{\varepsilon})<k_3\varepsilon\}$ with $k_3<k_1$.
\end{lemma}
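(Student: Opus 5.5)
Assume \(s=1\) (otherwise replace \(u\) by \(-u\), which solves the same equation since \(A\) is odd and \(W'\) is odd). Set \(w=1-u\ge0\) on \(\Omega\). The plan is to show that \(w\) is a supersolution-type object for a linear operator with a uniformly positive zeroth-order coefficient, and compare it with an exponential barrier.

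\textbf{Step 1 (linearization).} Since \(u\equiv1\) solves \eqref{e1.3} and \(A(x,0)=0\), the representation \eqref{e2.5} gives \(A(x,\nabla u)=-B\nabla w\) with \(\lambda I\le B\le\Lambda I\). Hence
\[
-\varepsilon^2\operatorname{div}_g(B\nabla w)+c\,w=0,\qquad
c=\frac{W'(1)-W'(u)}{1-u}=u(1+u)\ \ (\text{with } c=2 \text{ at } u=1).
\]
By Lemma~\ref{lem:nondegeneracy}, applied at every point \(y\) with \(B_{2R_0\varepsilon}(y)\subset\Omega\), we get \(u(y)\ge\eta_0\). Therefore \(c\ge \eta_0(1+\eta_0)=:\kappa>0\) on the set \(\Omega'=\{y\in\Omega:\operatorname{dist}(y,\{u=0\})>2R_0\varepsilon\}\).

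\textbf{Step 2 (barrier).} On \(B_r(x)\) with \(r=d-2R_0\varepsilon\), which lies in \(\Omega'\), compare \(w\) with
\[
\psi(y)=\cosh\!\Big(\frac{\gamma\,\operatorname{dist}(y,x)}{\varepsilon}\Big)\Big/\cosh\!\Big(\frac{\gamma r}{\varepsilon}\Big).
\]
To avoid the cut locus and the singularity at \(x\), it is cleaner to use the smooth function \(\sum_i\cosh(\gamma y_i/\varepsilon)\) in normal coordinates when \(r\) is small, and a chain of balls of size \(\sim\rho\) otherwise. A direct computation gives
\[
-\varepsilon^2\operatorname{div}(B\nabla\psi)+c\psi\ \ge\ (\kappa-C\gamma^2\Lambda-C\gamma\varepsilon)\psi>0
\]
once \(\gamma\) is small. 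Since \(w\le 2\le2\psi\) on \(\partial B_r(x)\), the comparison principle for the linear operator (the weak maximum principle, using \(c>0\), as in Lemma~\ref{lem:maximum-principles}) yields \(w\le 2\psi\). Evaluating at \(x\) gives \(w(x)\le 2/\cosh(\gamma r/\varepsilon)\le Ce^{-\gamma d/\varepsilon}\). The factor \(e^{2R_0\gamma}\) is absorbed into \(C\), which gives \eqref{e3.6}. The main obstacle is making the barrier uniform on a manifold for \(r\) up to order one. Iterating the estimate on balls of fixed radius \(\sim\varepsilon\), as in the standard argument, handles this: each step gains a fixed factor \(\theta<1\) by comparison on \(B_{K\varepsilon}\).

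\textbf{Step 3 (energy bound \eqref{e3.7}).} This is a Caccioppoli inequality. Take a cutoff \(\eta\) equal to \(1\) on \(\Omega_\varepsilon\), supported in \(\Omega_\varepsilon^*\), with \(|\nabla\eta|\le C/(\varepsilon k_3)\). Test the equation for \(w\) with \(\eta^2 w\). Using \(B\ge\lambda\), \(c\ge0\) and Young's inequality, we get
\[
\varepsilon^2\int\eta^2|\nabla w|^2\le C\varepsilon^2\int|\nabla\eta|^2w^2\le C\int_{\Omega^*_\varepsilon}w^2 .
\]
Dividing by \(\varepsilon\) and using \(F^2\le\Lambda|\nabla u|^2\) from \eqref{e2.2}, together with
\[
W(u)=\tfrac14(1-u)^2(1+u)^2\le(1-u)^2 ,
\]
gives \eqref{e3.7}.

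One caveat: \(\Omega_\varepsilon^*\) must stay inside \(\Omega\), and there \(u\) need not be near \(1\). This is harmless, since only \(c\ge0\) (i.e. \(u\ge0\) in \(\Omega\)) is used in Step 3.
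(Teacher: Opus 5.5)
Your Steps 1 and 3 agree with the paper. You use the same coefficient $c=u(1+u)$, the same appeal to Lemma~\ref{lem:nondegeneracy} to get $c\ge\eta_0(1+\eta_0)$ at distance at least $2R_0\varepsilon$ from $\{u=0\}$, and the same Caccioppoli argument with $\eta^2w$ (needing only $c\ge0$) for \eqref{e3.7}.

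The gap is the barrier inequality in Step 2. The matrix in $A(x,\nabla w)=B\nabla w$ comes from Lemma~\ref{lem:flux} with $q=0$, so $B(y)=\int_0^1D^2_{\xi\xi}\Phi(y,t\nabla w(y))\,dt=D^2_{\xi\xi}\Phi(y,\nabla w(y))$. This matrix depends on the direction of $\nabla w$. Its derivatives involve $D^3_{\xi\xi\xi}\Phi(y,\nabla w)[\nabla^2w]$, of size up to $|\nabla^2w|/|\nabla w|$, which you do not control. So effectively $B$ is only measurable with $\lambda I\le B\le\Lambda I$. Then $\operatorname{div}_g(B\nabla\psi)$ contains the uncontrolled term $(\operatorname{div}_gB)\cdot\nabla\psi$, and no direct computation gives $-\varepsilon^2\operatorname{div}_g(B\nabla\psi)+c\psi\ge(\kappa-C\gamma^2\Lambda-C\gamma\varepsilon)\psi$. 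For measurable coefficients such an inequality is false in general. In one dimension, if $b$ jumps from $\lambda$ to $\Lambda$ at a point where $\psi'>0$, then $-(b\psi')'$ carries the negative mass $-(\Lambda-\lambda)\psi'\,\delta$, and a radially increasing $\psi$ is exactly this situation. So $w\le2\psi$ is not justified. Neither is the gain $\theta<1$ in your iteration on $B_{K\varepsilon}$, which rests on the same comparison.

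To repair this, check the barrier against the nonlinear operator and linearize only the difference.
\begin{itemize}
\item Work on a ball $B_{K\varepsilon}(y_0)$ inside the injectivity radius on which $c\ge\kappa$. Let $\rho=\operatorname{dist}_g(\cdot,y_0)$ and $\psi=\cosh(\gamma\rho/\varepsilon)/\cosh(\gamma K)$.
\item One-homogeneity gives the Lipschitz field $A(y,\nabla\psi)=\frac{\gamma\sinh(\gamma\rho/\varepsilon)}{\varepsilon\cosh(\gamma K)}A(y,\nabla\rho)$.
\item Using $|\nabla^2\rho|\le C/\rho$ and $\sinh a\le a\cosh a$, you get $\varepsilon^2|\operatorname{div}_gA(y,\nabla\psi)|\le C(\gamma^2+\gamma\varepsilon)\psi$. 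Hence $-\varepsilon^2\operatorname{div}_gA(y,\nabla\psi)+c\psi\ge0$ for $\gamma$ small.
\item Subtract the exact identity $-\varepsilon^2\operatorname{div}_gA(y,\nabla w)+cw=0$. Only then write $A(y,\nabla(\mu\psi))-A(y,\nabla w)=\tilde B\nabla(\mu\psi-w)$, with $\mu=\sup_{\partial B_{K\varepsilon}(y_0)}w$.
\item The weak maximum principle (test with $(\mu\psi-w)^-$, using $c\ge0$) gives $w\le\mu\psi$, so $w(y_0)\le\mu/\cosh(\gamma K)$. The measurability of $\tilde B$ is now harmless.
\end{itemize}
This is exactly how the paper treats $\theta z_\varepsilon$ in Lemma~\ref{lem:nondegeneracy} and $v_s$ in Proposition~\ref{prop:trapping}. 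With $M(t)=\sup\{w(y):y\in\Omega,\ \operatorname{dist}_g(y,\{u=0\})\ge t\}$, this gives $M(t)\le M(t-K\varepsilon)/\cosh(\gamma K)$ for $t\ge(2R_0+K)\varepsilon$, hence \eqref{e3.6}.

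The paper uses no barrier here at all. Testing with $\eta^2w$ on $B_{r\varepsilon}(x)$ gives the hole-filling inequality $I(r-1)\le\theta I(r)$ for $I(r)=\int_{B_{r\varepsilon}(x)}w^2\,\mathrm dV_g$. This is iterated up to $r\approx d/\varepsilon-2R_0$, and the resulting $L^2$ smallness becomes a pointwise bound by local boundedness on $B_\varepsilon(x)$. That route uses only $\lambda I\le B\le\Lambda I$ and Lipschitz cutoffs, so measurable coefficients and the cut locus never enter. The repaired barrier route avoids the De Giorgi--Moser step, at the cost of an explicit computation on small balls.
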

\begin{proof}
Set $w=1-su$, then we have
\begin{equation}\label{e3.8}
 -\varepsilon^{2}\operatorname{div}_{g}A(x,\nabla w)+c(x)w=0
 \end{equation}
where $c(x)=|u(x)|(1+|u(x)|)$. Since $B_{2R_{0}\varepsilon}(x)\subset\Omega$, it follows from lemma \ref{lem:nondegeneracy} that $c(x)\geq\eta_0(1+\eta_0)>0$. Moreover, by \eqref{e2.5}, there exists a  measurable symmetric matrix $B(x)$ with $\lambda I\leq B(x)\leq\Lambda I$ such that
\[A(x,\nabla w)=A(x,\nabla w)-A(x,0)=B(x)\nabla w.\]
Therefore \eqref{e3.8} can be rewritten as
\begin{equation}\label{eq3.8} -\varepsilon^{2}\operatorname{div}_{g}\left(B(x)\nabla w\right)+c(x)w=0.\end{equation}
If \(d=+\infty\), then \(\Omega\) is a connected
component of \(M\). Integrating \eqref{eq3.8} over
\(\Omega\) gives
$
 \int_\Omega cw\,\mathrm dV_g=0.
$
Since \(c>0\) and \(w\ge0\), we obtain \(w\equiv0\),
and the conclusion follows.

If \(d/\varepsilon\le2R_0+2\), estimate
\eqref{e3.6} follows from \(0\le w\le1\)
after enlarging \(C\).
We may therefore assume that
\[
 2R_0+2<d/\varepsilon<+\infty.
\]

Let $B_{r\varepsilon}(x)$ be a ball centered at $x$ with $r\varepsilon$ such that $r\leq d/\varepsilon-2R_0$, where $d=\operatorname{dist}(x,\{u=0\})$. Then, for every $y\in B_{r\varepsilon}(x)$, we have $B_{2R_0\varepsilon}(y)\cap\{u=0\}=\emptyset$, hence $c(y)\geq\eta_0(1+\eta_0)>0$. Let \(2\le r\le d/\varepsilon-2R_0\).
Choose \(\eta\in C_c^\infty(B_{r\varepsilon}(x))\)
such that
\[
 0\le\eta\le1,\qquad
 \eta\equiv1\text{ on }B_{(r-1)\varepsilon}(x),
 \qquad
 \eta\equiv0\text{ outside }B_{r\varepsilon}(x),
 \qquad
 |\nabla\eta|\le\frac{C}{\varepsilon}.
\]
We choose the test function $\eta^2w$ in \eqref{eq3.8} and  integrate by parts to obtain
\[\varepsilon^2\int_{B_{r\varepsilon}(x)}\eta^2 B\nabla w\cdot\nabla w \ dV_g+2\varepsilon^2\int_{B_{r\varepsilon}(x)}\eta w B\nabla w\cdot\nabla\eta \ dV_g+\int_{B_{r\varepsilon}(x)}c\eta^2w^2\  dV_g=0.\]
Since $\lambda I\leq B\leq\Lambda I$ and $c\geq\eta_0(1-\eta_0):=c_0$, using Young's inequality, we obtain
\[
\begin{split}
 &\varepsilon^2\lambda
 \int_{B_{r\varepsilon}(x)}
 \eta^2|\nabla w|^2\,\mathrm dV_g
 +c_0\int_{B_{r\varepsilon}(x)}
 \eta^2w^2\,\mathrm dV_g\\
 &\qquad\le
 \frac{\varepsilon^2\lambda}{2}
 \int_{B_{r\varepsilon}(x)}
 \eta^2|\nabla w|^2\,\mathrm dV_g
 +\frac{2\Lambda^2}{\lambda}\varepsilon^2
 \int_{B_{r\varepsilon}(x)}
 |\nabla\eta|^2w^2\,\mathrm dV_g.
\end{split}
\]
Since $\eta\equiv1$ in $B_{(r-1)\varepsilon}(x)$ and $|\nabla\eta|\leq C/\varepsilon$, we obtain
\begin{equation}\label{e3.9}\varepsilon^2\int_{B_{(r-1)\varepsilon}(x)} |\nabla w|^2\ dV_g+c_0\int_{B_{(r-1)\varepsilon}(x)}w^2\ dV_g\leq C\int_{B_{r\varepsilon}(x)\setminus B_{(r-1)\varepsilon}(x)} w^2 \ dV_g, \end{equation}
where $C$ is a constant depend only on $\lambda,\Lambda$. Set $I(r)=\int_{B_{r\varepsilon}(x)}w^2\,\mathrm dV_g$,  then \eqref{e3.9} implies
\[c_0I(r-1)\leq C_1 \left(I(r)-I(r-1)\right).\]
Therefore,
\[I(r-1)\leq\theta I(r),\]
where $\theta=\frac{C_1}{c_0+C_1}\in(0,1)$.
Set
\[
 r_m=\frac d\varepsilon-2R_0,
 \qquad
 a=-\log\theta>0,
 \qquad
 N=\lfloor r_m-1\rfloor.
\]
Iterating the preceding inequality gives
\[
 I(1)
 \le I(r_m-N)
 \le\theta^N I(r_m)
 \le C e^{-a r_m}I(r_m),
\]
where we used \(1\le r_m-N<2\).
Since \(0\le w\le1\) and \(r_m\varepsilon\le d\), we have
\[
 I(r_m)
 \le\operatorname{Vol}_g(B_d(x))
 \le C d^n.
\]
Consequently,
\[
 I(1)
 \le C d^n
 e^{
 -a\left(\frac d\varepsilon-2R_0\right)
 }.
\]
Using the local boundedness estimate in
\(B_\varepsilon(x)\), we obtain
\[
\begin{split}
 w(x)
 \le C\varepsilon^{-n/2}I(1)^{1/2}
 \le
 C\left(\frac d\varepsilon\right)^{n/2}
 e^{
 -\frac a2\left(\frac d\varepsilon-2R_0\right)
  }
 \le C e^{-\gamma d/\varepsilon},
\end{split}
\]
where we may take \(\gamma=a/4>0\).
This proves \eqref{e3.6}.


For the energy estimate, choose
\(\eta\in C_c^\infty(\Omega_\varepsilon^*)\) such that
\[
 0\le\eta\le1,\qquad
 \eta\equiv1\text{ on }\Omega_\varepsilon,
 \qquad
 |\nabla\eta|\le\frac{C}{k_3\varepsilon}.
\]
Testing \eqref{eq3.8} with \(\eta^2w\) and arguing
as above, we obtain
\[
 \frac{\lambda\varepsilon^2}{2}
 \int_{\Omega_\varepsilon^*}
 \eta^2|\nabla w|^2\,\mathrm dV_g
 +
 \int_{\Omega_\varepsilon^*}
 c\eta^2w^2\,\mathrm dV_g
 \le
 \frac{2\Lambda^2}{\lambda}\varepsilon^2
 \int_{\Omega_\varepsilon^*}
 |\nabla\eta|^2w^2\,\mathrm dV_g.
\]
Since \(c\ge0\), it follows that
\[
 \varepsilon^2
 \int_{\Omega_\varepsilon}|\nabla w|^2\,\mathrm dV_g
 \le
 C\int_{\Omega_\varepsilon^*}w^2\,\mathrm dV_g,
\]
where \(C\) may depend on \(k_3\).
Therefore,
\[\varepsilon\int_{\Omega_{\varepsilon}}F(x,\nabla u)^2\ dV_g\leq\varepsilon\Lambda\int_{\Omega_{\varepsilon}} |\nabla w|^2\ dV_g\leq C\varepsilon^{-1}\int_{\Omega_{\varepsilon}^*} w^2 \ dV_g.\]
On the other hand, since $ W(u)=\frac{1}{4}(1-su)^{2}(1+s u)^{2}\leq(1-su)^{2}=w^2$, we have
\[ {\varepsilon}^{-1} \int_{\Omega_{\varepsilon}} W(u)\ dV_g \le {\varepsilon}^{-1} \int_{\Omega_{\varepsilon}} w^2\ dV_g \le C\varepsilon^{-1} \int_{\Omega_{\varepsilon}^*} w^2 \ dV_g.\]
Combining the two estimates, we obtain \eqref{e3.7}.

\end{proof}

\section{Strict stability and anisotropic barriers}\label{sec:barriers}

 The first variation of the anisotropic area functional \(\mathcal{A}_F\) is given by
\begin{equation}\label{e4.1}
\left.\frac{\mathrm{d}}{\mathrm{d}s}\right|_{t=0}\mathcal{A}_F(\Sigma_t)
=
\int_\Sigma H_F(\Sigma,\nu)\,\varphi\,\mathrm{d}\mathcal{H}_g^{n-1},
\end{equation}
where \(H_F\) denotes the anisotropic mean curvature, given by
\begin{equation}\label{e4.2}
H_F(\Sigma,\nu)
=
D_xF(x,\nu)\cdot\nu
+
\operatorname{div}_\Sigma D_\xi F(x,\nu).
\end{equation}

\begin{lemma}\label{lem:foliation}
Let \(\Gamma\subset M\) be a smooth, closed, connected, embedded, two-sided hypersurface
which is \(F\)-minimal and strictly \(F\)-stable.
Then there exist \(s_0>0\), \(c_0>0\), and a positive smooth function \(\phi\in C_c^\infty(\Gamma)\)
such that the normal graphs
\begin{equation}\label{e4.3}
\Gamma_s=\left\{\exp_x(s\phi(x)\nu_\Gamma(x)):x\in\Gamma\right\},\qquad |s|<s_0,
\end{equation}
form a foliation of a neighborhood of \(\Gamma\) and satisfy
\begin{equation}\label{e4.4}
H_F(\Gamma_s,\nu_s)\ge c_0s\qquad\text{for }0<s<s_0.
\end{equation}
where \(\nu_s\) is chosen smoothly in \(s\) with
\(\nu_0=\nu_\Gamma\). Moreover, if the orientation of $\Gamma$ is reversed (i.e., $\nu_\Gamma$ is replaced by $-\nu_\Gamma$), then an analogous foliation exists on the other side of $\Gamma$, and the corresponding estimate for $H_F$ holds as well.

\end{lemma}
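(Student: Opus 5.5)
The plan is to take $\phi$ to be the first eigenfunction of the Jacobi operator of $\AF$ on $\Gamma$, and then read off the mean curvature of $\Gamma_s$ from a first-order Taylor expansion in $s$. Since $\Gamma$ is closed, $C_c^\infty(\Gamma)=C^\infty(\Gamma)$. The steps are as follows.

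\emph{Step 1: the linearized operator.} Write $H_F(\Gamma_t^\varphi)$ for the anisotropic mean curvature \eqref{e4.2} of the normal graph $\Gamma_t^\varphi$, pulled back to $\Gamma$ through $x\mapsto\exp_x(t\varphi\nu_\Gamma)$. Because $H_F(\Gamma,\nu_\Gamma)=0$, differentiating \eqref{e4.1} once more at $t=0$ gives
\[
Q_F(\varphi)=\int_\Gamma \varphi\,L_F\varphi\,\mathrm d\mathcal H^{n-1}_g,
\qquad
L_F\varphi:=\left.\frac{d}{dt}\right|_{t=0}H_F(\Gamma_t^\varphi).
\]
Here $L_F$ is a second-order linear operator on $\Gamma$ of the form
\[
L_F\varphi=-\operatorname{div}_\Gamma\bigl(D^2_{\xi\xi}F(x,\nu_\Gamma)\nabla_\Gamma\varphi\bigr)+(\text{first-order terms})+q\varphi .
\]
The zeroth-order coefficient $q$ collects the second fundamental form, the curvature of $g$, and the $x$-derivatives of $F$.

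\emph{Step 2: ellipticity and self-adjointness.} By one-homogeneity, $D^2_{\xi\xi}F(x,\nu)$ annihilates $\nu$. Using \eqref{e1.6} together with $\Phi=F^2/2$, one checks that it is positive definite on $T\Gamma=\nu^\perp$. So $L_F$ is uniformly elliptic on the closed manifold $\Gamma$.

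Self-adjointness is where I expect the main difficulty. For position-dependent $F$, the first-order terms have to combine into a symmetric operator. I would try to establish this directly from the symmetry of the second-variation bilinear form, obtained by polarizing $Q_F$ and using that mixed partials commute for two-parameter variations at a critical point. The paper already asserts this equivalence, so I would rely on it.

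\emph{Step 3: choice of $\phi$.} Let $\mu_1$ be the first eigenvalue of $L_F$ and $\phi$ its eigenfunction. By the Krein–Rutman theorem (or the usual variational argument, since $L_F$ is self-adjoint and elliptic), $\mu_1$ is simple and $\phi$ can be taken smooth and strictly positive; the strong maximum principle, Lemma~\ref{lem:maximum-principles}, applies as well. Strict $F$-stability gives $\mu_1\,\|\phi\|_{L^2}^2=Q_F(\phi)>0$, so $\mu_1>0$.

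\emph{Step 4: foliation and the curvature bound.} Since $\phi>0$ on the compact manifold $\Gamma$, the map $(x,s)\mapsto\exp_x(s\phi(x)\nu_\Gamma(x))$ has differential at $s=0$ with normal component $\phi\nu_\Gamma\neq0$. The inverse function theorem and compactness then give $s_0>0$ such that this map is a diffeomorphism from $\Gamma\times(-s_0,s_0)$ onto a neighborhood of $\Gamma$. Hence the hypersurfaces $\Gamma_s$ in \eqref{e4.3} foliate that neighborhood. Choosing $\nu_s$ continuously with $\nu_0=\nu_\Gamma$, the map $s\mapsto H_F(\Gamma_s,\nu_s)$ is $C^2$ in $s$, uniformly on $\Gamma$, because $F\in C^4$. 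Taylor expansion gives
\[
H_F(\Gamma_s,\nu_s)=s\,L_F\phi+O(s^2)=s\,\mu_1\phi+O(s^2)\ge s\,\mu_1\min_\Gamma\phi-Cs^2 .
\]
After shrinking $s_0$, this yields \eqref{e4.4} with $c_0=\tfrac12\mu_1\min_\Gamma\phi$.

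\emph{Step 5: the other side.} Replace $\nu_\Gamma$ by $-\nu_\Gamma$. Since $F$ is even, $\Gamma$ remains $F$-minimal and strictly $F$-stable, and $Q_F$ is unchanged. Repeating Steps 1–4 produces the foliation on the other side, with the same type of estimate.
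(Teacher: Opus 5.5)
Your proposal is correct and follows essentially the same route as the paper: take the positive first eigenfunction $\phi$ of the self-adjoint elliptic operator $L_F$ (whose first eigenvalue is positive by strict $F$-stability), Taylor-expand $H_F(\Gamma_s,\nu_s)=s\mu_1\phi+O(s^2)$, and get the foliation from $\phi>0$. The only differences are minor. For the other side, the paper uses $s<0$ and flips the normal via evenness of $F$, while you rerun the argument with $-\nu_\Gamma$; this gives the same family because $L_F$ is unchanged under reversing the orientation. Your mixed-partials argument for self-adjointness of $L_F$ supplies a justification that the paper only asserts.
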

\begin{proof}
For \(f\in C^{2,\alpha}(\Gamma)\) with sufficiently small norm, set
\[
\Psi_f(x):=\exp_x\bigl(f(x)\nu_\Gamma(x)\bigr),
\qquad
\Gamma(f):=\Psi_f(\Gamma),
\]
and let \(\nu_f\) be the unit normal to \(\Gamma(f)\) chosen
continuously in \(f\) with \(\nu_0=\nu_\Gamma\).
Define
\(\mathscr H:\mathcal U\to C^{0,\alpha}(\Gamma)\),
where \(\mathcal U\) is a sufficiently small neighborhood of
\(0\) in \(C^{2,\alpha}(\Gamma)\), by
\[
\mathscr H(f)(x)
:=H_F(\Gamma(f),\nu_f)\bigl(\Psi_f(x)\bigr).
\]
Since $\Gamma$ is $F$-minimal, we have
\[
\mathscr{H}(0)=H_F(\Gamma,\nu_\Gamma)=0.
\]
For \(\varphi,\psi\in C^{2,\alpha}(\Gamma)\), the second variation of \(\mathcal{A}_F\) at \(\Gamma\) is given by
\[
\delta^2\mathcal{A}_F(\Gamma)[\varphi,\psi]
=
\int_\Gamma \psi L_F\varphi\,\mathrm d\mathcal H_g^{n-1},
\]
where \(L_F:=D\mathscr{H}(0)\) is self-adjoint and uniformly elliptic. 
Moreover, the strict \(F\)-stability of $\Gamma$ implies that $\delta^2\mathcal{A}_F(\Gamma)[\varphi,\varphi]>0$ for any $\varphi\neq0$. Hence \(L_F\) is positive definite, its first eigenvalue
\[
\lambda_1
=
\inf_{\varphi\not\equiv0}
\frac{\int_\Gamma \varphi L_F \varphi \,\mathrm{d}\mathcal{H}_g^{n-1}}
{\int_\Gamma \varphi^2 \,\mathrm{d}\mathcal{H}_g^{n-1}}
>0,
\]
and there exists a positive principal eigenfunction \(\phi>0\) satisfying
\(L_F\phi=\lambda_1\phi\).

\medskip
Now, we define the family of hypersurfaces
\[
\Gamma_s:=\bigl\{\exp_x(s\phi(x)\nu_\Gamma(x)):x\in\Gamma\bigr\},\qquad |s|<s_0,
\]
where \(\exp_x\) denotes the exponential map at \(x\).
Since
\[
\mathscr H(s\phi)
=
sL_F\phi+O(s^2)
=
s\lambda_1\phi+O(s^2)
\qquad\text{in }C^0(\Gamma),
\]
 there exist \(s_0>0\) sufficiently small and \(c_0>0\) such that
 \[
H_F(\Gamma_s,\nu_s)
\ge
\frac{\lambda_1m_0}{2}s
\qquad
\text{for }0<s<s_0,
\]
 where $m_0:=\min_\Gamma\phi>0$. Moreover, since \(\phi>0\) and the exponential map is a local diffeomorphism in the normal direction,
the hypersurfaces \(\{\Gamma_s\}_{|s|<s_0}\) form a foliation of a tubular neighborhood of \(\Gamma\).

\medskip
Finally, from the same expansion, after decreasing \(s_0\) if necessary,
gives
\[
H_F(\Gamma_{-s},\nu_{-s})\leq-c_0s
\qquad\text{for }0<s<s_0.
\]
Since \(F\) is even, reversing the normal changes the sign of \(H_F\).
Consequently,
\[
H_F(\Gamma_{-s},-\nu_{-s})\geq c_0s
\qquad\text{for }0<s<s_0,
\]
which gives the required estimate on the other side of \(\Gamma\).

\end{proof}

Building on the foliation constructed in lemma \ref{lem:foliation}, we now construct the anisotropic signed distance function $d_s$.

\begin{lemma}\label{lem:distance}
Let \(\Gamma\) and \(s_0\) be as in Lemma~\ref{lem:foliation}.
After decreasing \(s_0\) if necessary, there exist a fixed
neighborhood \(\mathcal N\) of \(\Gamma\), containing every
\(\Gamma_s\) with \(|s|<s_0\), and functions
\(d_s\in C^3(\mathcal N)\) such that
\begin{equation}\label{e4.5}
d_s=0\quad\text{on }\Gamma_s,
\qquad
F(x,\nabla d_s)=1\quad\text{in }\mathcal N.
\end{equation}
The map \((s,x)\mapsto d_s(x)\) is \(C^3\), and
$
\nabla d_s=\frac{\nu_s}{F(x,\nu_s)}
\quad\text{on }\Gamma_s.
$
On every level set \(\{d_s=t\}\), we have
\begin{equation}\label{e4.6}
\operatorname{div}_g A(x,\nabla d_s)
=
H_F\left(\{d_s=t\},
\frac{\nabla d_s}{|\nabla d_s|_g}\right).
\end{equation}
Moreover, there exists \(C>0\), independent of \(s\), such that
\begin{equation}\label{e4.7}
\operatorname{div}_g A(x,\nabla d_s)
\geq c_0s-C|d_s|
 \quad\text{in} \quad \mathcal N
\end{equation}
for \(0<s<s_0\).
\end{lemma}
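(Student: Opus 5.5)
The plan is to build \(d_s\) as the signed distance to \(\Gamma_s\) measured in the \emph{dual} Finsler metric. Let \(F^\circ(x,\cdot)\) be the norm on \(T_xM\) dual to \(F(x,\cdot)\) on \(T^*_xM\cong T_xM\). By \eqref{e1.6}, \(\Phi\) is uniformly convex and \(C^4\) away from \(0\), so \(F^\circ\) is a smooth, uniformly convex Finsler norm. The eikonal equation \(F(x,\nabla d)=1\) with \(d=0\) on \(\Gamma_s\) is the Hamilton--Jacobi equation of the Hamiltonian \(\Phi(x,\xi)=\tfrac12F^2\). I will solve it by the method of characteristics, equivalently by Finsler normal exponential maps.

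\textbf{Characteristics.} From each \(y\in\Gamma_s\), start the Hamiltonian flow of \(\Phi\) with initial covector \(\xi_0=\nu_s(y)/F(y,\nu_s(y))\), for which \(\Phi=\tfrac12\). The projected curves are \(\dot x=A(x,\xi)\) and the covector satisfies \(\dot\xi=-D_x\Phi\). Define
\[
\mathcal E(s,y,t)=x(t;y,s).
\]
At \(t=0\), \(\partial_t\mathcal E=A(y,\xi_0)\) satisfies \(A\cdot\nu_s=F(y,\nu_s)>0\) by \eqref{e2.1}, so it is transversal to \(\Gamma_s\). Hence \(\mathcal E(s,\cdot,\cdot)\) is a local diffeomorphism near \(\Gamma_s\times\{0\}\). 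Compactness and embeddedness of \(\Gamma_s\) upgrade this to a diffeomorphism of \(\Gamma_s\times(-\tau,\tau)\) onto a neighborhood, with \(\tau\) uniform in \(|s|<s_0\) by continuity in \(s\).

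\textbf{Definition of \(d_s\) and the eikonal equation.} Set \(d_s(\mathcal E(s,y,t))=t\), and take \(\mathcal N=N_{\tau'}(\Gamma)\) with \(\tau'\) small so that \(\mathcal N\) lies in every image. The standard characteristic argument shows \(\nabla d_s(x(t))=\xi(t)\), which gives \(F(x,\nabla d_s)=1\) and \(\nabla d_s=\nu_s/F(x,\nu_s)\) on \(\Gamma_s\). Regularity is the point to watch. \(\Phi\) is \(C^4\) near \(\{\Phi=\tfrac12\}\), so the flow is \(C^3\). The data \(\nu_s\) is smooth in \((s,y)\), being built from the smooth \(\phi\) and the exponential map. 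Therefore \(\mathcal E\) and its inverse are \(C^3\) jointly in \((s,y,t)\), and so is \((s,x)\mapsto d_s(x)\). The gradient \(\nabla d_s=\xi\) is then also \(C^3\) (it is given by the flow), so \(A(x,\nabla d_s)\) is \(C^2\) and its divergence is continuous in \((s,x)\).

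\textbf{Identity \eqref{e4.6}.} On a level set \(\{d_s=t\}\), the unit normal is \(\nu=\nabla d_s/|\nabla d_s|_g\). Zero-homogeneity of \(D_\xi F\) and one-homogeneity of \(A=F\,D_\xi F\) give
\[
A(x,\nabla d_s)=F(x,\nabla d_s)\,D_\xi F(x,\nu)=D_\xi F(x,\nu),
\]
since \(F(x,\nabla d_s)=1\). So the task is to compare \(\operatorname{div}_g\) of the vector field \(X=D_\xi F(x,\nu)\), defined on all of \(\mathcal N\), with \eqref{e4.2}. Split
\[
\operatorname{div}_gX=\operatorname{div}_\Sigma X+\langle\nabla_\nu X,\nu\rangle .
\]
Here the \(\Sigma\)-divergence of the full field \(X\) agrees with the definition in \eqref{e4.2}. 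For the normal part, differentiate \(X\cdot\nu=F(x,\nu)\) (Euler) along \(\nu\), using \(D_\xi F(x,\nu)\cdot\nabla_\nu\nu=0\). The expected identity is
\[
\langle\nabla_\nu X,\nu\rangle=\nabla_\nu\bigl(F(x,\nu)\bigr)-D_\xi F\cdot\nabla_\nu\nu = D_xF(x,\nu)\cdot\nu .
\]
The first equality is just differentiating \(X\cdot\nu=F(x,\nu)\). The second requires the chain rule \(\nabla_\nu(F(x,\nu))=D_xF\cdot\nu+D_\xi F\cdot\nabla_\nu\nu\) together with \(D_\xi F\cdot\nabla_\nu\nu=0\). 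This is where the interpretation of the term \(D_xF(x,\nu)\cdot\nu\) in \eqref{e4.2} must be pinned down in coordinates. I expect this bookkeeping, and making sure the formula matches the convention of \eqref{e4.2}, to be the main technical obstacle.

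\textbf{Inequality \eqref{e4.7}.} Let \(h(s,x):=\operatorname{div}_gA(x,\nabla d_s)\), which is continuous and in fact \(C^1\) in \(x\) uniformly in \(s\). On \(\Gamma_s=\{d_s=0\}\), \eqref{e4.6} and \eqref{e4.4} give \(h\ge c_0s\). Any \(x\in\mathcal N\) equals \(\mathcal E(s,y,t)\) with \(t=d_s(x)\), so
\[
h(s,x)\ge h(s,y)-\sup|\partial_th|\,|t| .
\]
The derivative \(\partial_th\) is bounded uniformly in \(s\) because \(h\circ\mathcal E\) is \(C^1\) on the compact parameter set. This yields \(h\ge c_0s-C|d_s|\). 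Shrinking \(s_0\) and \(\mathcal N\) as needed completes the proof.
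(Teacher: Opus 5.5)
Your proposal is correct and follows essentially the same route as the paper. Both use characteristics of the Hamiltonian $\Phi$ with initial covector $\nu_s/F(x,\nu_s)$, define $d_s$ as the flow time, split $\operatorname{div}_g D_\xi F(x,\nu)$ into tangential and normal parts (getting the normal part from Euler's identity), and integrate $\partial_t\operatorname{div}_g A(x,\nabla d_s)$ along the characteristics to get \eqref{e4.7}.

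One small correction: your claim $D_\xi F(x,\nu)\cdot\nabla_\nu\nu=0$ is false for genuinely anisotropic $F$, because $D_\xi F(x,\nu)$ need not be parallel to $\nu$. You never actually need it, since the term $\langle X,\nabla_\nu\nu\rangle$ from the product rule cancels the term $D_\xi F(x,\nu)\cdot\nabla_\nu\nu$ from the chain rule identically. The paper uses the true identity $\langle D^2_{\xi\xi}F(x,\nu)[\nabla_\nu\nu],\nu\rangle=0$ instead, which comes from zero-homogeneity of $D_\xi F$.
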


\begin{proof}
Let \(\Gamma_s\) and \(\nu_s\) be the foliation and the unit normal from Lemma~\ref{lem:foliation}.
Fix \(x\in M\). By the Riesz representation theorem, the metric \(g_x\) induces an isometric isomorphism
\[
\mathcal R_x:T_x^*M\longrightarrow T_xM,\qquad
\langle \mathcal R_x p,v\rangle_{g_x}=p(v)
\quad\text{for all }p\in T_x^*M,\ v\in T_xM.
\]
We denote \(p^\sharp:=\mathcal R_x p\) and \(v^\flat:=\mathcal R_x^{-1}v\).
Define the Hamiltonian
\[
\widehat\Phi(x,p):=\Phi(x,\mathcal R_x p),\qquad (x,p)\in T^*M\setminus \{0\}.
\]
On \(\Gamma_s\), set
\[
p_s(x):=\frac{\nu_s^\flat(x)}{F(x,\nu_s(x))}\in T_x^*M.
\]
Since \(F\) is positively homogeneous of degree one,
\[
\widehat\Phi(x,p_s(x))
=
\Phi\left(x,\frac{\nu_s(x)}{F(x,\nu_s(x))}\right)
=
\frac12.
\]
In local coordinates, let \((X(t),P(t))\) solve the Hamilton equations
\begin{equation}\label{e0.1}
\dot X(t)= D_p\widehat\Phi(X(t),P(t)),\qquad
\dot P=-D_x\widehat\Phi(X(t),P(t)),
\end{equation}
with initial data \((X(0),P(0))=(x,p_s(x))\). Then, by the homogeneity of \(\Phi\), we obtain
\[
\dot X(0)
=
A(x,p_s(x)^\sharp)
=
D_\xi\Phi\left(x,\frac{\nu_s(x)}{F(x,\nu_s(x))}\right)
=
D_\xi F(x,\nu_s(x)).
\]
Therefore, Euler's identity yields
\[
\langle \dot X(0),\nu_s\rangle_{g_x}
=
D_\xi F(x,\nu_s)\cdot\nu_s
=
F(x,\nu_s)>0.
\]
We denote by \(X(t;x)\)  the position at time \(t\) of the Hamiltonian flow with initial point \(x\in\Gamma_s\).
Thus the map \(\Theta_s\) from \(\Gamma_s\times(-r,r)\) to a neighborhood of \(\Gamma_s\),
denoted by \(\mathcal N_s\), defined as
\[
\Theta_s(x,t)=X(t;x),
\]
which, after decreasing \(s_0\) if necessary, is a \(C^3\) diffeomorphism onto its image for \(|s|<s_0\).

We now verify that $P$ is precisely the differential of the coordinate $t$.
Let \(v\in T_x\Gamma_s\), and choose a smooth curve
\(x_\tau\in\Gamma_s\) such that
\[
x_0=x,\qquad
\left.\frac{\mathrm d}{\mathrm d\tau}\right|_{\tau=0}x_\tau=v.
\]
Let \((X_\tau,P_\tau)\) solve the Hamilton equations with
initial data
\[
(X_\tau(0),P_\tau(0))
=
(x_\tau,p_s(x_\tau)).
\]
Differentiating   \eqref{e0.1} with respect to \(\tau\) and evaluating at \(\tau=0\), we obtain the linearized Hamilton equations
\[
\begin{aligned}
\dot{\delta X}
&=
D^2_{px}\widehat\Phi(X_0,P_0)\,\delta X
+
D^2_{pp}\widehat\Phi(X_0,P_0)\,\delta P,\\
\dot{\delta P}
&=
-D^2_{xx}\widehat\Phi(X_0,P_0)\,\delta X
-
D^2_{xp}\widehat\Phi(X_0,P_0)\,\delta P,
\end{aligned}
\]
where $(\delta X,\delta P):=\left.\frac{\partial}{\partial\tau}\right|_{\tau=0}(X_\tau,P_\tau).$
Moreover, since $\frac{\mathrm d}{\mathrm dt}\widehat{\Phi}(X_\tau,P_\tau)=D_x\widehat{\Phi}\cdot D_p\widehat{\Phi}+D_p\widehat{\Phi}\cdot(-D_x\widehat{\Phi})=0$, we deduce that \[\widehat{\Phi}(X_\tau(t),P_\tau(t))=\widehat{\Phi}(x_\tau,p_s(x_\tau))=\frac{1}{2},\]
which implies that $\delta \widehat{\Phi}=0$.
Using the linearized Hamilton equations, we obtain
\begin{align*}
 \frac{\mathrm{d}}{\mathrm{d}t}(P\cdot\delta X)
 &=
 -D_{x}\widehat{\Phi}\cdot\delta X
 +P\cdot\bigl(D^{2}_{px}\widehat{\Phi}\,\delta X
 +D^{2}_{pp}\widehat{\Phi}\,\delta P\bigr)\\
 &=
 D_{x}\widehat{\Phi}\cdot\delta X
 +D_{p}\widehat{\Phi}\cdot\delta P
 =
 \delta\widehat{\Phi}
 =
 0,
\end{align*}
where the second equality follows by differentiating
Euler's identity
\(P\cdot D_p\widehat\Phi=2\widehat\Phi\)
with respect to \(x\) and \(p\).  Therefore, $P\cdot\delta X=P(0)\cdot\delta X(0)=p_s(v)=0$ for all $t$. On the other hand,
\[
 P\cdot\dot X=P\cdot D_{p}\widehat{\Phi}=2\widehat{\Phi}=1.
\]
Thus, in the coordinates induced by \(\Theta_s\),   we conclude that $P=dt$.

Define the anisotropic signed distance function \(d_s:\mathcal N_s\to\mathbb R\) by
\[
d_s(\Theta_s(x,t))=t.
\]
Then we have
\[
\mathrm{d}d_s=\mathrm{d}t=P,
\qquad
\nabla d_s=(\mathrm{d}d_s)^\sharp=P^\sharp.
\]
Since \(d_s(\Theta_s(x,0))=0\) and
\[
\widehat{\Phi}(X(t),P(t))
=
\Phi(X(t),P(t)^\sharp)
=
\frac12 F(X(t),P(t)^\sharp)^2
=
\frac12,
\]
it follows that \(F(x,\nabla d_s)=1\) on \(\mathcal N_s\).
After decreasing \(s_0\) further, we may choose a fixed
neighborhood \(\mathcal N\) of \(\Gamma\) such that, for
every \(|s|\leq s_0\),
\[
\Gamma_s\subset\mathcal N,
\qquad
\overline{\mathcal N}
\subset
\Theta_s\bigl(\Gamma_s\times(-r/2,r/2)\bigr).
\]
Restricting \(d_s\) to \(\mathcal N\), we obtain \eqref{e4.5}. Moreover, the initial data give
\[
\nabla d_s=\frac{\nu_s}{F(x,\nu_s)}
\quad\text{on }\Gamma_s.
\]


Let \(\Sigma:=\{d_s=t\}\) be a level set, and choose an orthonormal frame
\(e_1,\dots,e_{n-1},\nu\) at a point \(x\in\Sigma\), where
\[
\nu=\frac{\nabla d_s}{|\nabla d_s|_g}.
\]
The divergence of \(A(x,\nabla d_s)\) can be decomposed into
\[
\operatorname{div}_g A(x,\nabla d_s)
=
\sum_{i=1}^{n-1}\langle \nabla_{e_i} A(x,\nabla d_s), e_i \rangle
+
\langle \nabla_\nu A(x,\nabla d_s), \nu \rangle.
\]
Since \(F(x,\nabla d_s)=1\) and \(D_\xi F\) is zero-homogeneous, we have
\[
A(x,\nabla d_s)
=
D_\xi F(x,\nabla d_s)
=
D_\xi F(x,\nu)
\]
in a neighborhood of \(\Sigma\). Therefore the tangential part becomes
\[
\sum_{i=1}^{n-1}\langle \nabla_{e_i} A(x,\nabla d_s), e_i \rangle
=
\sum_{i=1}^{n-1}\langle \nabla_{e_i} D_\xi F(x,\nu), e_i \rangle
=
\operatorname{div}_\Sigma D_\xi F(x,\nu).
\]
For the normal part, since  \(\langle D_{\xi\xi}^2F(x,\nu)[\nabla_\nu \nu],\nu \rangle=0\), we have
\[
\langle \nabla_\nu A(x,\nabla d_s), \nu \rangle=\langle \nabla_\nu D_\xi F(x,\nu), \nu \rangle=\langle \nabla_xD_\xi F(x,\nu)[\nu], \nu \rangle,
\]
where \(\nabla_x\) denotes covariant differentiation.
Differentiating Euler's identity
\[
D_\xi F(x,\nu)\cdot\nu=F(x,\nu)
\]
along the direction \(\nu\) gives
\[
\langle \nabla_xD_\xi F(x,\nu)[\nu], \nu \rangle
=
\nabla_xF(x,\nu)\cdot\nu.
\]
Combining the tangential and normal parts, we obtain
\[
\operatorname{div}_g A(x,\nabla d_s)
=
\nabla_xF(x,\nu)\cdot\nu
+
\operatorname{div}_\Sigma D_\xi F(x,\nu)
=
H_F(\Sigma,\nu),
\]
which proves \eqref{e4.6}.
Taking \(t=0\) and using \eqref{e4.4},  we obtain $\operatorname{div}_g A(x,\nabla d_s)\geq c_0 s$ on $\Gamma_s$ for $0<s<s_0$.
Set
\[
Q_s(x):=\operatorname{div}_g A(x,\nabla d_s),
\qquad x\in\mathcal N_s.
\]
By the \(C^3\) regularity of the family and compactness,
\[
\left|
\frac{\partial}{\partial t}
Q_s\bigl(\Theta_s(y,t)\bigr)
\right|
\leq C
\qquad
\text{for }y\in\Gamma_s,\quad |t|\leq r/2,\quad |s|\leq s_0,
\]
where \(C\) is independent of \(s\). For any \(x\in\mathcal N\), write
\(x=\Theta_s(y,d_s(x))\) with \(y\in\Gamma_s\).
Since \(\Theta_s(y,0)=y\), we have
\[
Q_s(x)-Q_s(y)
=
\int_0^{d_s(x)}
\frac{\partial}{\partial\tau}
Q_s\bigl(\Theta_s(y,\tau)\bigr)\,\mathrm d\tau
\geq -C|d_s(x)|.
\]
Since \(Q_s(y)\geq c_0s\) for \(0<s<s_0\), we obtain
\[
\operatorname{div}_g A(x,\nabla d_s)
=
Q_s(x)
\geq Q_s(y)-C|d_s(x)|
\geq c_0s-C|d_s(x)|.
\]
This proves \eqref{e4.7}.


\end{proof}

\begin{lemma}\label{lem:composition}
Let \(f\in C^2(\mathbb R)\). Then, for any $x\in \mathcal N$,
\begin{equation}\label{eq:composition}
 \operatorname{div}_{g}A\bigl(x,\nabla(f(d_{s}))\bigr)
 =
 f''(d_{s})+f'(d_{s})\operatorname{div}_{g}A(x,\nabla d_{s}).
\end{equation}
\end{lemma}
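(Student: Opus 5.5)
The identity follows from the chain rule, the one-homogeneity and evenness of $A(x,\cdot)$, and the eikonal identity $F(x,\nabla d_s)=1$ from Lemma~\ref{lem:distance}. We compute pointwise in $\mathcal N$, where $d_s\in C^3$, and treat separately the points where $f'(d_s)$ vanishes, since $A$ need not be differentiable at $\xi=0$.

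\textbf{Step 1 (homogeneity).} By the chain rule, $\nabla(f(d_s))=f'(d_s)\nabla d_s$. By \eqref{e2.1}, $A(x,t\xi)=tA(x,\xi)$ for all $t\in\R$; this uses evenness of $F$, so that $A$ is odd, and $A(x,0)=0$. Hence
\[
A\bigl(x,\nabla(f(d_s))\bigr)=f'(d_s)\,A(x,\nabla d_s)
\]
holds everywhere in $\mathcal N$, including where $f'(d_s)=0$. Since $\nabla d_s\neq0$ (indeed $F(x,\nabla d_s)=1$) and $A$ is smooth away from $\xi=0$, the field $A(x,\nabla d_s)$ is $C^2$. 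The right-hand side is therefore a $C^1$ vector field whenever $f\in C^2$, so its divergence is classical.

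\textbf{Step 2 (product rule).} The product rule gives
\[
\operatorname{div}_g\bigl(f'(d_s)A(x,\nabla d_s)\bigr)
= f''(d_s)\,\langle\nabla d_s, A(x,\nabla d_s)\rangle_g + f'(d_s)\operatorname{div}_g A(x,\nabla d_s).
\]

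\textbf{Step 3 (eikonal identity).} By Euler's identity \eqref{e2.1} and \eqref{e4.5},
\[
\langle A(x,\nabla d_s),\nabla d_s\rangle_g = F(x,\nabla d_s)^2 = 1.
\]
Substituting this into Step 2 yields \eqref{eq:composition}.

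The only delicate point is the possible non-differentiability of $A$ at the origin where $f'(d_s)=0$. We never differentiate $A$ at $0$: we pull the scalar $f'(d_s)$ out via odd homogeneity before differentiating. So no real obstacle is expected beyond confirming that $A(x,\nabla d_s)$ is $C^1$, which follows from $d_s\in C^3$ and $F\in C^4(TM\setminus\{0\})$.
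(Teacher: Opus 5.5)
Your proposal is correct and follows the paper's proof essentially verbatim: you pull $f'(d_s)$ out of $A$ by odd one-homogeneity, apply the product rule, and use $A(x,\nabla d_s)\cdot\nabla d_s=F(x,\nabla d_s)^2=1$. Your added remark is a worthwhile justification that the paper leaves implicit: since $\nabla d_s\neq 0$, the field $A(x,\nabla d_s)$ is $C^2$, so the divergence is classical even where $f'(d_s)=0$.
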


\begin{proof}
Since \(A\) is one-homogeneous  and \(F(x,\nabla d_s)=1\), we get
\[
A\bigl(x,\nabla(f(d_s))\bigr)
=
f'(d_s)A(x,\nabla d_s),
\]
Therefore, by a direct calculation,
\[
\operatorname{div}_g\bigl(f'(d_s)A(x,\nabla d_s)\bigr)
=
f''(d_s)\nabla d_s\cdot A(x,\nabla d_s)
+
f'(d_s)\operatorname{div}_g A(x,\nabla d_s).
\]
Since $A(x,\nabla d_{s})\cdot\nabla d_{s}=F(x,\nabla d_{s})^{2}=1$, we conclude that \eqref{eq:composition} holds.
\end{proof}

Let $U(t)$ be a one-dimensional solution  defined on $t\in(-\infty,\infty)$, satisfying
\begin{equation}\label{e4.8}
 U''=W'(U),\qquad U(0)=0,
 \qquad \lim_{t\to\pm\infty}U(t)=\pm 1.
\end{equation}
Here \(U'>0\), and both \(1-|U(t)|\) and \(U'(t)\)
decay exponentially as \(|t|\to\infty\).

In the following, we construct strict subsolutions for \eqref{e1.3}.

\begin{lemma}\label{lem:barrier}
After decreasing \(s_0\) if necessary, there exist constants
\(K>1\), \(\kappa>0\), \(\varepsilon_0>0\), and a tubular
neighborhood
\[
N_r(\Gamma)
:=
\{x\in M:\operatorname{dist}_g(x,\Gamma)<r\}
\subset\subset\mathcal N
\]
such that \(\Gamma_s\subset N_{r/4}(\Gamma)\) for
\(0\le s\le s_0\), and, for every \(0<\varepsilon<\varepsilon_0\)
and \(K\varepsilon\le s\le s_0\), the function
\begin{equation}\label{e4.10}
v_s(x)
=
U\left(\frac{d_s(x)}{\varepsilon}\right)
-\kappa\varepsilon s
\end{equation}
satisfies
\begin{equation}\label{e4.11}
\varepsilon^2\operatorname{div}_g A(x,\nabla v_s)
-W'(v_s)>0
\qquad\text{in }N_r(\Gamma).
\end{equation}
\end{lemma}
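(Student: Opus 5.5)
The plan is to compute the operator on $v_s$ directly with Lemma~\ref{lem:composition} and then split $N_r(\Gamma)$ according to the size of $|d_s|/\varepsilon$. Set $t=d_s(x)/\varepsilon$. Applying Lemma~\ref{lem:composition} with $f(\tau)=U(\tau/\varepsilon)$, which is legitimate because the constant shift does not change the gradient, and using $U''=W'(U)$, gives
\[
\varepsilon^2\operatorname{div}_g A(x,\nabla v_s)-W'(v_s)
= W'(U(t))-W'(U(t)-\kappa\varepsilon s)+\varepsilon U'(t)\,Q_s(x),
\]
where $Q_s=\operatorname{div}_gA(x,\nabla d_s)$. By \eqref{e4.7}, $Q_s\ge c_0 s-C\varepsilon|t|$. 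The mean value theorem gives $W'(U)-W'(U-\kappa\varepsilon s)=\kappa\varepsilon s\,W''(\theta)$ for some $\theta$ between $U(t)-\kappa\varepsilon s$ and $U(t)$. After dividing by $\varepsilon s$, it therefore suffices to show
\[
\kappa W''(\theta)+U'(t)\Bigl(c_0-\tfrac{C\varepsilon|t|}{s}\Bigr)>0 .
\]
Because $s\ge K\varepsilon$, we have $\varepsilon|t|/s\le |t|/K$.

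First choose $r$ and $s_0$ so that $\Gamma_s\subset N_{r/4}(\Gamma)$ and $N_r(\Gamma)\subset\subset\mathcal N$, which keeps \eqref{e4.7} valid. Then fix $T>0$ with $W''(\sigma)=3\sigma^2-1\ge 1$ whenever $|\sigma|\ge 1-\delta$ and $|U(t)|\ge 1-\delta/2$ for $|t|\ge T$, with $\delta$ small. We then split into two regions.

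\textbf{Inner region $|t|\le T$.} Here $U'(t)\ge m_T>0$ and $W''\ge -1$. Take $K\ge 2CT/c_0$, so that $c_0-C|t|/K\ge c_0/2$. The expression is then at least $-\kappa+m_Tc_0/2$, which is positive once $\kappa<m_Tc_0/2$.

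\textbf{Outer region $|t|\ge T$.} Since $\kappa\varepsilon s\le\kappa\varepsilon_0 s_0\le\delta/2$ for $\varepsilon_0$ small, we get $|\theta|\ge 1-\delta$, hence $W''(\theta)\ge1$. The term $U'(t)\,C|t|/K$ has to be absorbed by $\kappa$. This is the main obstacle: the estimate must hold uniformly in $t$, and the ratio $\varepsilon|t|/s$ can only be bounded by $|t|/K$, not by a constant. The way through is that $U'(t)|t|$ is bounded on $\mathbb R$ because $U'$ decays exponentially; in fact $\sup_{|t|\ge T}U'(t)|t|\to0$ as $T\to\infty$. So we choose $T$ large, then $\kappa$ small by the inner-region condition, and then verify that $C\sup_{|t|\ge T}U'(t)|t|/K<\kappa$. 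The order of constants is delicate because $m_T$ decays like $e^{-\sqrt2T}$, so shrinking $\kappa$ with $T$ is dangerous. This is resolved by enlarging $K$ last: the outer condition only needs $K>C\sup_{|t|}U'(t)|t|/\kappa$, and a larger $K$ keeps the inner condition true.

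The order of choices is therefore: $r$ and $s_0$; then $\delta$ and $T$; then $\kappa<m_Tc_0/2$ with $\kappa\le1$; then $K\ge\max\{2CT/c_0,\,2C\sup_{\mathbb R}|t|U'(t)/\kappa\}$; and finally $\varepsilon_0$ with $\kappa\varepsilon_0s_0\le\delta/2$ and $K\varepsilon_0\le s_0$. With these choices both regions give a positive quantity, which proves \eqref{e4.11} on $N_r(\Gamma)$.
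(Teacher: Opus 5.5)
Your proof is correct and follows the paper's argument essentially step for step. You use the same identity from Lemma~\ref{lem:composition} and the same split into two regions. In the inner region $U'$ is bounded below, and the term $c_0 s$ dominates once $K\ge 2CT/c_0$. In the outer region $W''>0$, and the error $C\varepsilon^2|t|U'(t)$ is absorbed by taking $K$ large relative to $1/\kappa$. The order of constants is also the same: $\kappa$, then $K$, then $\varepsilon_0$.

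The only difference is cosmetic. You apply the mean value theorem with a cutoff $|t|\le T$, whereas the paper expands $W'(U)-W'(U-\delta)$ as a cubic in $\delta$ and cuts at $|U|\le b$ with $b\in(1/\sqrt3,1)$.
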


\begin{proof}
Choose \(r>0\) so that \(N_r(\Gamma)\subset\subset\mathcal N\),
and then decrease \(s_0\) so that
\(\Gamma_s\subset N_{r/4}(\Gamma)\) for \(0\leq s\leq s_0\).
Set \(\delta=\kappa\varepsilon s\) and \(r_s=d_s/\varepsilon\).
Then \(v_s=U(r_s)-\delta\). Using \eqref{eq:composition} and \(U''=W'(U)\), we obtain
\begin{equation}\label{e4.12}
\varepsilon^2\operatorname{div}_g A(x,\nabla v_s)-W'(v_s)
=
W'(U)-W'(U-\delta)
+
\varepsilon U'\operatorname{div}_g A(x,\nabla d_s),
\end{equation}
where
\begin{equation}\label{e4.13}
W'(U)-W'(U-\delta)
=
\delta(3U^2-1)-3U\delta^2+\delta^3.
\end{equation}

Choose \(b\in(1/\sqrt{3},1)\).   In the case of \(|U|\le b\), since $U$ is strictly increasing, there exist \(R_b>0\) such that \(|r_s|\le R_b\). Then, it follows from \eqref{e4.7} that
 \[\operatorname{div}_g A(x,\nabla d_s)\ge c_0s-\frac{CR_b}{K}s\geq\frac{c_0}{2}s\]
 by taking $K\geq\frac{2C R_b}{c_0}$. On the other hand,
\[W'(U)-W'(U-\delta)\geq -\delta-3b\delta^2+\delta^3\geq-2\delta\]
 for $\delta\leq\frac{1}{3b}$.
 Therefore, we have
 \[\varepsilon^2\operatorname{div}_g A(x,\nabla v_s)-W'(v_s)\geq -2\delta+\varepsilon m_b\frac{c_0}{2}s=\left(\frac{c_0m_b}{2}-2\kappa\right)\varepsilon s>0\]
 for $\kappa<\frac{c_0m_b}{4}$ with $m_b:=\min_{[-R_b,R_b]}U'>0$.

 In the case of \(|U|\ge b\). Since $|U|\leq1$, we have
 \[\left|-3U\delta^2+\delta^3\right|\leq 3\delta^2+\delta^3\leq\frac{3b^2-1}{2}\delta\]
 for $\delta\leq\frac{3b^2-1}{8}$. Therefore, we deduce that
 \[W'(U)-W'(U-\delta)\geq \frac{3b^2-1}{2}\delta\quad\text{for }\delta\leq\frac{3b^2-1}{8}.\]
 Moreover, since $\sup_{t\in\mathbb{R}}|t|U'(t)<\infty$, using again \eqref{e4.7} we obtain
\[
\varepsilon U'\operatorname{div}_g A(x,\nabla d_s)
\ge -C\varepsilon^2|r_s|U'(r_s)\geq-C_1\varepsilon^2.
\]
Therefore, we obtain
\[\varepsilon^2\operatorname{div}_g A(x,\nabla v_s)-W'(v_s)\geq\left(\frac{3b^2-1}{2}\kappa K-C_1\right)\varepsilon^2>0\]
 for $K>\frac{2C_1}{(3b^2-1)\kappa}$.
 Fix \(0<\kappa<c_0m_b/4\), and choose
\[
K>
\max\left\{
1,\frac{2CR_b}{c_0},
\frac{2C_1}{(3b^2-1)\kappa}
\right\}.
\]
Finally, choose \(\varepsilon_0>0\) so that
\[
K\varepsilon_0<s_0,
\qquad
\kappa\varepsilon_0s_0
\leq
\min\left\{\frac1{3b},\frac{3b^2-1}{8}\right\}.
\]
Then both cases apply for every
\(0<\varepsilon<\varepsilon_0\) and
\(K\varepsilon\leq s\leq s_0\).
This proves \eqref{e4.11}, and hence \(v_s\) is a strict
subsolution of \eqref{e1.3}.

\end{proof}

\begin{proposition}\label{prop:trapping}
Let $\varepsilon_j\to0$ and $u_{j}\in H^{1}(M)\cap L^{4}(M)$ be global weak solutions of \eqref{e1.3} and
 let \(r>0\) be sufficiently small, as in
Lemma~\ref{lem:barrier}, with \(N_r(\Gamma)\subset\subset\Omega\).
 Suppose that    a smooth, closed, connected, embedded, two-sided and strictly $F$-stable $F$-minimal hypersurface $\Gamma$ in $\Omega\subset M$ is a limit interface of the sequence $\{u_j\}$. Then, there is $C>0$ such that
\begin{equation}\label{e4.14}
 \{u_{j}=0\}\cap N_{r/2}(\Gamma)\subset N_{C\varepsilon_{j}}(\Gamma)
\end{equation}
for all sufficiently large $j$.
\end{proposition}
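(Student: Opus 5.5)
We argue on one side of \(\Gamma\) at a time, say the side into which \(\nu_\Gamma\) points; the other side follows by reversing the orientation (Lemma~\ref{lem:foliation}) and replacing \(u_j\) by \(-u_j\) if necessary. Since the nodal sets converge to \(\Gamma\) in Hausdorff distance and \(\Gamma\) is connected and two-sided, the set \(N_{r}(\Gamma)\setminus N_{\delta}(\Gamma)\) has two components for small \(\delta\). On each component \(u_j\) has no zeros for large \(j\). Lemma~\ref{lem:nondegeneracy} and Lemma~\ref{lem:pure-decay} then show that \(u_j\) is exponentially close to a constant sign \(\pm1\) there. After possibly replacing \(u_j\) by \(-u_j\) (and passing to a subsequence), we assume \(u_j\to 1\) on the \(\nu_\Gamma\)-side.

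Before sliding, I need to check that the signs on the two sides are opposite. If \(u_j\) had the same sign on both sides, the energy concentrating on \(\Gamma\) would still be an even-multiplicity configuration. The sliding argument below would not distinguish this, because it uses the limiting sign only on the side where the barrier sits. So the proposition only needs \(u_j\approx+1\) on the side where we slide. If \(u_j\approx-1\) there, we use \(-v_s\) as a supersolution instead.

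Fix the strict subsolutions \(v_s=U(d_s/\varepsilon_j)-\kappa\varepsilon_j s\) of Lemma~\ref{lem:barrier}, for \(K\varepsilon_j\le s\le s_0\). For \(s=s_0\), the hypersurface \(\Gamma_{s_0}\) lies at a fixed positive distance from \(\Gamma\), while \(\Gamma_{s_0}\subset N_{r/4}(\Gamma)\). In the region \(\{d_{s_0}\le0\}\cap N_r(\Gamma)\) we have \(v_{s_0}<0\) up to exponentially small errors. Away from that region, \(u_j\ge 1-Ce^{-\gamma c/\varepsilon_j}\) by \eqref{e3.6}, since we are at distance \(\gtrsim 1\) from the nodal set. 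Meanwhile \(v_{s_0}\le 1-\kappa\varepsilon_j s_0\).

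I then compare on the domain \(D=N_r(\Gamma)\cap\{\text{the }\nu_\Gamma\text{-side}\}\), handling the boundary as follows. On \(\partial N_r(\Gamma)\) the distance to \(\{u_j=0\}\) is \(\ge r/2\), so \(u_j\ge 1-Ce^{-\gamma r/(2\varepsilon_j)}>1-\kappa\varepsilon_j s\ge v_s\). Near \(\Gamma\) itself (where \(u_j\) may be negative), \(d_s\le -c s\), so \(v_s\le U(-cs/\varepsilon_j)<0\). Rather than \(D\), I therefore use the region \(\{d_s>-c's\}\), whose inner boundary lies where \(v_s<-1+\) small. On that inner boundary \(u_j\ge-1\) does not directly beat \(v_s\), so I would instead truncate: take \(\max(v_s,-1+\text{small})\)-type comparisons, or work in \(\{v_s>u_j\}\) directly. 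On \(\{v_s>u_j\}\cap N_r\), the function \(w=u_j-v_s\) satisfies, by the finite-difference representation \eqref{e2.5},
\[-\varepsilon_j^2\operatorname{div}_g(B\nabla w)+cw<0,\]
and it is positive on \(\partial N_r\).

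Initially \(u_j\ge v_{s_0}\). Let \(s_*=\inf\{s\in[K\varepsilon_j,s_0]: u_j\ge v_\tau\text{ for all }\tau\in[s,s_0]\}\). If \(s_*>K\varepsilon_j\), there is an interior touching point \(x_0\) with \(w\ge0\), \(w(x_0)=0\). This point is not on \(\partial N_r\) by the exponential decay, so Lemma~\ref{lem:maximum-principles}(i), applied to the strict inequality \eqref{e4.11}, gives a contradiction. This is the standard sliding argument, as in the proof of Lemma~\ref{lem:nondegeneracy}, with continuity in \(s\) coming from the \(C^3\) dependence of \(d_s\). Hence \(u_j\ge v_{K\varepsilon_j}\), and \(u_j>0\) wherever \(U(d_{K\varepsilon_j}/\varepsilon_j)>\kappa K\varepsilon_j^2\). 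This holds whenever \(d_{K\varepsilon_j}\ge \varepsilon_j\). Since \(\Gamma_{K\varepsilon_j}\) lies within \(CK\varepsilon_j\) of \(\Gamma\) and \(d_s\) is comparable to the distance, the zeros on this side lie in \(N_{C\varepsilon_j}(\Gamma)\).

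The main obstacle is the boundary of the comparison region near \(\Gamma\), where \(u_j\) is uncontrolled. Here the barrier must stay below \(u_j\) without using any information about \(u_j\). I would handle this by noting that \(v_s\le -1+\text{(exp.\ small)}-\kappa\varepsilon_j s<-1\le u_j\) when \(d_s\le -L\varepsilon_j|\log\varepsilon_j|\). So \(w>0\) automatically on the far side of \(\Gamma_s\), using \(|u_j|\le1\) together with the strict shift \(-\kappa\varepsilon_j s\) dominating \(1+U(t)\sim e^{-\sqrt2|t|}\). If this fails at moderate depths, I would multiply the shift or modify \(U\) far from \(0\).
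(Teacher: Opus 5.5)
Once you take the comparison domain to be all of $N_r(\Gamma)$, your argument is essentially the paper's proof. On the far-side part of $\partial N_r(\Gamma)$, and near $\Gamma$ at the initial slice $s=s_0$, one has $d_s\le -a$, and the shift $\kappa\varepsilon_j s\ge\kappa K\varepsilon_j^2$ beats $1+U(d_s/\varepsilon_j)=O(e^{-ca/\varepsilon_j})$, so $v_s<-1\le u_j$. On the near-side boundary $u_j>v_s$ by Lemma~\ref{lem:pure-decay}, and a first interior touching in the slide from $s_0$ down to $K\varepsilon_j$ is excluded by Lemma~\ref{lem:maximum-principles}(i).

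Two corrections are needed. First, the differential inequality for $w=u_j-v_s$ should read $-\varepsilon_j^2\operatorname{div}_g(B\nabla w)+cw>0$, not $<0$, since Lemma~\ref{lem:maximum-principles}(i) requires a strict supersolution. Second, your truncation and ``moderate depths'' hedges are unnecessary: a priori control of $w$ is needed only on $\partial N_r(\Gamma)$ and at $s=s_0$, never in the interior.
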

\begin{proof}
Since $\Gamma$ is a limit interface of $\{u_j\}$ in $\Omega\subset M$, there exists a   neighborhood \(N_{2\rho}(\Gamma)\subset \Omega\) such that
\[
\{u_j=0\}\cap N_{2\rho}(\Gamma)
\longrightarrow\Gamma
\]
in the Hausdorff distance. Decreasing \(r\) and then \(s_0\) if necessary, we may assume
that \(N_r(\Gamma)\Subset N_{2\rho}(\Gamma)\) and
\(\Gamma_s\subset N_{r/4}(\Gamma)\) for \(0\leq s\leq s_0\).
By the Hausdorff convergence,  there exists \(J\) such that for all \(j>J\),
\[
\{u_j=0\}\cap N_{2\rho}(\Gamma)\subset N_{r/4}(\Gamma).
\]
 Since
\[
\{x:r/4<\operatorname{dist}(x,\Gamma)<r\}\subset N_r(\Gamma)\subset\subset N_{2\rho}(\Gamma),
\]
it follows that
\[
\{u_j=0\}\cap \{x:r/4<\operatorname{dist}(x,\Gamma)<r\}=\varnothing
\]
for all sufficiently large \(j\). Since \(\Gamma\) is connected, \(u_j\) has constant sign
on each side of the annulus.
We first consider the positive side.
Replacing \(u_j\) by \(-u_j\) if necessary, we may assume
that \(u_j>0\) on the positive annulus.
This replacement does not change the nodal set.

On the negative boundary of \(N_r(\Gamma)\), compactness gives
\(a>0\) such that \(d_s\leq-a\) uniformly for \(0\leq s\leq s_0\).
Hence, for \(K\varepsilon_j\leq s\leq s_0\),
\[
v_s
\leq -1+Ce^{-c/\varepsilon_j}-\kappa\varepsilon_j s
\leq -1+Ce^{-c/\varepsilon_j}-\kappa K\varepsilon_j^2
<-1\leq u_j
\]
for all sufficiently large \(j\).

On the positive boundary, the distance to \(\{u_j=0\}\)
is bounded below by a positive constant independent of \(j\).
By Lemma~\ref{lem:pure-decay},
\[
u_j\geq1-Ce^{-c/\varepsilon_j}.
\]
Since \(U\leq1\), we have
\[
v_s
\leq1-\kappa\varepsilon_j s
\leq1-\kappa K\varepsilon_j^2
<1-Ce^{-c/\varepsilon_j}
\leq u_j
\]
for all sufficiently large \(j\).
Thus \(v_s<u_j\) on \(\partial N_r(\Gamma)\),
uniformly for \(K\varepsilon_j\leq s\leq s_0\).


Fix \(s=s_0\). By Lemma~\ref{lem:foliation}, \(d_{s_0}\) is strictly negative on \(\Gamma\). Set \[d_*:=-\max_\Gamma d_{s_0}>0.\]
Choose \(0<\eta<d_*/2\) sufficiently small that
\(d_{s_0}<-\eta\) on the closed negative side of \(N_r(\Gamma)\).
Hence
\[
U\left(\frac{d_{s_0}}{\varepsilon_j}\right)
\le -1+C e^{-c\eta/\varepsilon_j}.
\]
By \eqref{e4.10},
\[
v_{s_0}
=
U\left(\frac{d_{s_0}}{\varepsilon_j}\right)-\kappa\varepsilon_j s_0
\le
-1+C e^{-c\eta/\varepsilon_j}-\kappa\varepsilon_j s_0
<-1\le u_j
\]
for sufficiently small \(\varepsilon_j\).

\noindent
The set
\(\{d_{s_0}\geq-\eta\}\cap\overline{N_r(\Gamma)}\)
lies on the positive side of \(\Gamma\) and stays at a fixed
positive distance from \(\Gamma\).
By the Hausdorff convergence and
Lemma~\ref{lem:pure-decay}, there exist constants \(C_1,c_1>0\) such that
\[
u_j\geq1-C_1e^{-c_1/\varepsilon_j}
\]
for all sufficiently large \(j\).
Since \(U\leq1\) and \(s_0\geq K\varepsilon_j\), we have
\[
v_{s_0}
\leq1-\kappa\varepsilon_j s_0
\leq1-\kappa K\varepsilon_j^2
<1-C_1e^{-c_1/\varepsilon_j}
\leq u_j
\]
for all sufficiently large \(j\).
Therefore,
\begin{equation}\label{e4.15}
u_j>v_{s_0}\qquad\text{in }N_r(\Gamma).
\end{equation}


Decreasing \(s\) continuously, we claim that
\(u_j>v_s\) in \(N_r(\Gamma)\) for every
\(s\in[K\varepsilon_j,s_0]\).
Otherwise, by continuity and the strict boundary inequalities,
there would be a first contact at some
\(s_*\in[K\varepsilon_j,s_0)\).
Thus
\[
w:=u_j-v_{s_*}\geq0
\quad\text{on }\overline{N_r(\Gamma)},
\]
and \(w(x_*)=0\) at some \(x_*\in N_r(\Gamma)\).
By \eqref{e2.5}, there exists a symmetric measurable matrix
\(B\), with \(\lambda I\leq B\leq\Lambda I\), such that
\[
-\varepsilon_j^2\operatorname{div}_g(B\nabla w)+cw
=
\varepsilon_j^2\operatorname{div}_g
A(x,\nabla v_{s_*})-W'(v_{s_*})
>0,
\]
where
$
c=
\frac{W'(u_j)-W'(v_{s_*})}{u_j-v_{s_*}}
$
is defined by continuity for \(u_j=v_{s_*}\).
Since \(c\in L^\infty\), dividing by \(\varepsilon_j^2\)
and applying Lemma~\ref{lem:maximum-principles}(i)
gives a contradiction.


Let \(x\in\{u_j=0\}\cap N_{r/2}(\Gamma)\).
 Since \(u_j>v_{K\varepsilon_j}\), from \eqref{e4.15} we have
\[
0>v_{K\varepsilon_j}(x)
=
U\left(\frac{d_{K\varepsilon_j}(x)}{\varepsilon_j}\right)-\kappa K\varepsilon_j^{2},
\]
hence
\[
U\left(\frac{d_{K\varepsilon_j}(x)}{\varepsilon_j}\right)<\kappa K\varepsilon_j^{2}.
\]
Since \(U\) is differentiable at \(0\) and \(U'(0)>0\), by the implicit function theorem, $(U^{-1})'(0)=1/{U'(0)}.$ Hence
\[
U^{-1}(t)
=
U^{-1}(0)+(U^{-1})'(0)t+o(t)
=
\frac{t}{U'(0)}+o(t)
\qquad \text{as }t\to 0.
\]
Therefore, there exists a constant \(C>0\) such that
\[
\frac{d_{K\varepsilon_j}(x)}{\varepsilon_j}
<
U^{-1}(\kappa K\varepsilon_j^{2})
\leq
C\varepsilon_j^{2},
\]
and thus $d_{K\varepsilon_j}(x)\leq C\varepsilon_j^{3}.$  On the other side of $\Gamma$, use the hypersurfaces
\(\Gamma_{-s}\) and the distance functions \(-d_{-s}\).
Repeating the argument, and replacing \(u_j\) by \(-u_j\)
if necessary, gives
\[
d_{-K\varepsilon_j}(x)\geq-C\varepsilon_j^3.
\]
Here \(K\) may be enlarged so that the barrier construction
applies to both sides of $\Gamma$. Moreover, since  $
|d_{\pm K\varepsilon_j}(x)-d_0(x)|
\leq C\varepsilon_j,
$ we obtain
\[
-C\varepsilon_j-C\varepsilon_j^3
\leq d_0(x)
\leq C\varepsilon_j+C\varepsilon_j^3.
\]
Hence \(|d_0(x)|\leq C\varepsilon_j\). Finally,
since \(d_0=0\)   and
$
\nabla d_0=\frac{\nu_\Gamma}{F(x,\nu_\Gamma)}
\quad\text{on }\Gamma,
$
we conclude that
\[
\operatorname{dist}_g(x,\Gamma)
\leq C|d_0(x)|
\leq C\varepsilon_j.
\]
This proves \eqref{e4.14}.

\end{proof}

\section{Rigidity and Classification of Entire Solutions}\label{sec:slab}

Let $F_{0}:\mathbb{R}^{n}\setminus\{0\}\to[0,+\infty)$ be even, positively one-homogeneous  and satisfy \eqref{e1.6}.  Write $\Phi_{0}=F_{0}^{2}/2$ and $A_{0}=D\Phi_{0}$. We set \(F_0(0)=\Phi_0(0)=0\) and \(A_0(0)=0\).  We prove the entire-solution classification required below.  The argument is a finite-difference version of the  sliding methods in \cite{Farina1999,FarinaValdinoci2011}.

\begin{lemma}\label{lem:no-zero}
Let $v$ be a bounded entire weak solution of
\begin{equation}\label{e5.1}
 \operatorname{div}\bigl(A_{0}(\nabla v)\bigr)-W'(v)=0
 \qquad\text{in }\mathbb{R}^{n},
\end{equation}
with $|v|\leq1$.  If $v$ has no zeros, then $v\equiv1$ or $v\equiv-1$.
\end{lemma}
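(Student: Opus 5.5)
Since $v$ is continuous by Lemma~\ref{lem:compactness} and $\mathbb R^n$ is connected, a solution without zeros has a constant sign. Replacing $v$ by $-v$ if necessary (allowed because $A_0$ is odd, $F_0$ being even, and $W'$ is odd), we may assume $0<v\le1$ in $\mathbb R^n$. The plan is to show $\inf_{\mathbb R^n}v\ge\eta_0$ for a fixed $\eta_0>0$, and then compare with the constant solution $1$ using Lemma~\ref{lem:maximum-principles}(iii).

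\emph{Step 1: uniform positivity.} We repeat the argument of Lemma~\ref{lem:nondegeneracy} in flat space with the autonomous operator $A_0$; there is no metric or $x$-dependence, so it is simpler. Fix $R_0$ as there and let $z\ge0$ be a nonzero minimizer of
\[
\mathcal J(z)=\int_{B_{R_0}}\bigl(\Phi_0(\nabla z)+W(z)-W(0)\bigr)\,dy
\]
in $H^1_0(B_{R_0})$, with $0\le z\le1$, $z>0$ inside and $z=0$ on $\partial B_{R_0}$. For any center $x_0$, set $z_{x_0}(y)=z(y-x_0)$. For $0<\theta<1$, $\theta z_{x_0}$ is a strict subsolution by \eqref{e3.5}. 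We also have $\min_{\overline{B_{R_0}(x_0)}}v>0$. The sliding-in-$\theta$ argument of that proof, using Lemma~\ref{lem:maximum-principles}(i), then gives $v\ge z_{x_0}$ in $B_{R_0}(x_0)$. Hence $v(x_0)\ge z(0)=:\eta_0>0$, and since $x_0$ is arbitrary, $\inf v\ge\eta_0$.

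\emph{Step 2: comparison with $1$.} Take $z_1=v$ and $z_2\equiv1$. Then $w=v-1\le0$, and we want $w\ge0$. The quotient in Lemma~\ref{lem:maximum-principles}(iii) is
\[
\frac{W'(v)-W'(1)}{v-1}=v^2+v\ge\eta_0^2+\eta_0=:\kappa>0
\]
wherever $w<0$. So (iii) gives $v-1\ge0$, hence $v\equiv1$; in the negative case, $v\equiv-1$.

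The main obstacle is Step 1: without a positive lower bound the coefficient $v^2+v$ could degenerate, as for a solution tending to $0$ at infinity. We handle this with the translation-invariant subsolution bump $z_{x_0}$. The delicate point is checking that the $\theta$-sliding starts, that is, $\theta z_{x_0}<v$ for small $\theta$. This holds because $v$ is continuous and positive on the compact ball. The contact point must also be interior, which holds because $z_{x_0}=0<v$ on the boundary.
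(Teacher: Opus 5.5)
Your proof is correct, but it handles the degenerate case differently from the paper. The paper sets $m=\inf v$ and splits into two cases. For $0<m<1$ it translates along a minimizing sequence and applies Lemma~\ref{lem:maximum-principles}(i) to $v_\infty-m$, using that the constant $m$ is a strict subsolution ($m-m^3>0$). For $m=0$ it shows that the translates satisfy $v_k\to0$ locally uniformly, so $-\operatorname{div}A_0(\nabla v_k)\ge\frac34 v_k$ on arbitrarily large balls. It then tests with $\varphi^2/v_k$, a Picone-type step based on $A_0(p)\cdot q\le F_0(p)F_0(q)$, to obtain $\int F_0(\nabla\varphi)^2\ge\frac34\int\varphi^2$ for all $\varphi\in C_c^\infty(B_R)$. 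This fails by scaling as $R\to\infty$.

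You instead exclude degeneration a priori. Sliding the fixed bump $\theta z(\cdot-x_0)$ under $v$ gives $\inf v\ge z(0)>0$. This is the flat, autonomous version of Lemma~\ref{lem:nondegeneracy}, and translation invariance lets a single minimizer $z$ serve every center, so no compactness argument is needed to make $\eta_0$ uniform. After that, your Step~2 via Lemma~\ref{lem:maximum-principles}(iii) is essentially the paper's $m>0$ case in different packaging.

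Both routes rest on the same fact: the Rayleigh quotient $\int F_0(\nabla\varphi)^2/\int\varphi^2$ on $B_R$ tends to $0$, so it eventually drops below $-W''(0)=1$, and the zero state is unstable. The paper's route uses the positive solution itself as the ground state. It therefore avoids building an auxiliary Dirichlet minimizer, which requires existence, the Euler--Lagrange equation, boundary continuity, and the strong minimum principle.

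Your route gives a quantitative lower bound for every positive entire solution. It also makes explicit the flat nondegeneracy estimate that the paper later uses without proof in Lemma~\ref{lem:end-states}, where it applies the nondegeneracy estimate to translates of an entire solution of the autonomous equation.
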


 \begin{proof}
 Without loss of generality, we assume \(v>0\) and set \(m:=\inf_{\mathbb R^n}v\).

If $m>0$, we claim that $m=1$.  Suppose to the contrary that $0<m<1$, we  choose a minimizing sequence \(\{x_k\}\) such that \(v(x_k)\to m\). For each \(k\), define $v_k(x):=v(x+x_k)\geq m$ for any $x\in\mathbb R^n$. By compactness and elliptic regularity for bounded entire solutions, there is a subsequence (still denoted by \(v_k\)) such that
\[
v_k\to v_\infty \quad\text{in }C^{1,\alpha'}_{\mathrm{loc}}(\mathbb R^n),
\]
where $v_\infty$ satisfies \eqref{e5.1} and $v_\infty(0)=m$. Let $w:=v_\infty-m\geq0$. Using \eqref{e2.5}, there is a symmetric measurable matrix \(B_{12}\) with \(\lambda I\le B_{12}\le \Lambda I\) such that
\[ -\operatorname{div}\left(B_{12}\nabla w\right)+c(x)w=m-m^3>0,\]
where $c(x)=\frac{W'(v_\infty)-W'(m)}{v_\infty-m}$ for $v_\infty\neq m$ and $c(x)=W''(m)$ for $v_\infty= m$ belongs to $L^\infty(\mathbb{R}^n)$. Thus \(w\ge0\) is a nonnegative strict supersolution that vanishes at the interior point \(0\), which is a contradiction with Lemma~\ref{lem:maximum-principles}(i).  Hence \(m=1\), i.e. \(v\equiv1\).

\medskip
Now assume that \(m=0\).
Choose \(x_k\) with \(v(x_k)\to0\), and set
\(v_k(x):=v(x+x_k)\).
As above, after passing to a subsequence,
\[
v_k\to v_\infty
\quad\text{in }C^{1,\alpha'}_{\mathrm{loc}}(\mathbb R^n),
\]
where \(v_\infty\geq0\) satisfies \eqref{e5.1} with \(v_\infty(0)=0\).
Using \eqref{e2.5}, we write its equation as
\[
-\operatorname{div}(B\nabla v_\infty)
+(v_\infty^2-1)v_\infty=0,
\]
where \(B\) is symmetric and measurable, with
\(\lambda I\leq B\leq\Lambda I\).
Lemma~\ref{lem:maximum-principles}(ii) gives
\(v_\infty\equiv0\).
Since \(v_k\to0\) locally uniformly, for every \(R>0\)
and all sufficiently large \(k\), we have
\[
0<v_k\leq\frac12
\quad\text{in }B_R.
\]
Consequently,
\[
-\operatorname{div}A_0(\nabla v_k)
=
v_k-v_k^3
\geq\frac34v_k
\quad\text{in }B_R.
\]

Taking the test function \(\varphi^2/v_k\) for any \(\varphi\in C_c^\infty(B_R)\), we obtain
\[
\int_{B_R} A_0(\nabla v_k) \cdot \nabla\left(\frac{\varphi^2}{v_k}\right) \, dx
\ge
\frac{3}{4}\int_{B_R} \varphi^2 \, dx.
\]
By the Convexity and two-homogeneity of $\Phi_0$, we have
\[\begin{split}
A_0(\nabla v_k) \cdot \nabla\left(\frac{\varphi^2}{v_k}\right)
=&\frac{2\varphi}{v_k} A_0(\nabla v_k)\cdot\nabla\varphi-\frac{\varphi^2}{v_k^2} A_0(\nabla v_k)\cdot\nabla v_k\\
\leq& \frac{2|\varphi|}{v_k}F_0(\nabla\varphi)F_0(\nabla v_k)-\frac{\varphi^2}{v_k^2}F_0(\nabla v_k)^2\\
=&F_0(\nabla\varphi)^2-\left(F_0(\nabla\varphi)^2- \frac{|\varphi|}{v_k}F_0(\nabla v_k)\right)^2
\leq F_0(\nabla\varphi)^2.
\end{split}\]
Therefore,
\[
\int_{B_R}F_0(\nabla\varphi)^2
\ge
\frac34\int_{B_R}\varphi^2\quad\text{for any }\varphi\in C_c^\infty(B_R).
\]
Fix \(\varphi_1\in C_c^\infty(B_1)\setminus\{0\}\) and set \(\varphi_R(x):=\varphi_1(x/R)\). Then
\[\frac{3}{4}\leq
\frac{\int_{B_R}F_0(\nabla\varphi_R)^2}{\int_{B_R}\varphi_R^2}
=
\frac{1}{R^2}
\frac{\int_{B_1}F_0(\nabla\varphi_1)^2}{\int_{B_1}\varphi_1^2}\to 0\quad\text{as }R\to\infty,
\]
which  is a contradiction. Thus \(m\neq0\).

Therefore, we conclude that \(m=1\), i.e. \(v\equiv1\). By a similar argument, we obtain \(v\equiv-1\) if \(v<0\).
\end{proof}

\begin{lemma}\label{lem:end-states}
Let $v$ be a solution of  \eqref{e5.1} with $|v|\leq1$, and suppose that
\begin{equation}\label{e5.4}
 \{v=0\}\subset\{|x_{n}|\leq L\}.
\end{equation}
Then there are $\sigma_{+},\sigma_{-}\in\{-1,1\}$ such that
\begin{equation}\label{e5.5}
 \lim_{x_{n}\to+\infty}v(x',x_{n})=\sigma_{+},\qquad
 \lim_{x_{n}\to-\infty}v(x',x_{n})=\sigma_{-},
\end{equation}
uniformly in $x'\in\R^{n-1}$.
\end{lemma}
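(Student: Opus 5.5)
The plan is to treat the upper half-space $\{x_n>L\}$; the lower one is symmetric. Since $v$ is continuous (Lemma~\ref{lem:compactness}) and $\{v=0\}\subset\{|x_n|\le L\}$, the set $\{x_n>L\}$ is connected and contains no zeros of $v$. Hence $v$ has a constant sign there, say $\sigma_+$. Replacing $v$ by $-v$ if needed, which is allowed because $A_0$ is odd and $W'$ is odd, we may assume $v>0$ on $\{x_n>L\}$. It then suffices to prove that
\[
\sup_{x'\in\mathbb R^{n-1}}\bigl(1-v(x',x_n)\bigr)\to0\quad\text{as }x_n\to+\infty .
\]

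We argue by contradiction with a translation–compactness argument, as in Lemma~\ref{lem:no-zero}. Suppose there exist $\delta>0$ and points $y_k=(y_k',t_k)$ with $t_k\to+\infty$ and $v(y_k)\le1-\delta$. Set $v_k(x):=v(x+y_k)$. Each $v_k$ solves \eqref{e5.1}, because the equation is autonomous, and $|v_k|\le1$. Lemma~\ref{lem:compactness} gives a subsequence with $v_k\to v_\infty$ in $C^{1,\alpha'}_{\mathrm{loc}}(\mathbb R^n)$, and $v_\infty$ is a bounded entire solution of \eqref{e5.1}.

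On each fixed ball $B_R$ we have $v_k>0$ as soon as $t_k-R>L$. Therefore $v_\infty\ge0$ on $\mathbb R^n$.

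We now rule out zeros of $v_\infty$. If $v_\infty(x_0)=0$ at some point, we write the equation, via \eqref{e2.5} with $A_0(0)=0$, as
\[
-\operatorname{div}(B\nabla v_\infty)+(v_\infty^2-1)v_\infty=0 .
\]
Lemma~\ref{lem:maximum-principles}(ii) then forces $v_\infty\equiv0$. Consequently $v_\infty$ is either identically $0$ or strictly positive.

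\textbf{Case $v_\infty>0$.} Lemma~\ref{lem:no-zero} gives $v_\infty\equiv1$. This contradicts $v_\infty(0)=\lim_k v(y_k)\le1-\delta$.

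\textbf{Case $v_\infty\equiv0$.} This is the main obstacle, since here the limit carries no information. We reuse the Rayleigh-quotient argument from the proof of Lemma~\ref{lem:no-zero}. For any $R$, and all $k$ large, $v_k\to0$ uniformly on $B_R$ and $v_k>0$ there, so
\[
-\operatorname{div}A_0(\nabla v_k)\ge\tfrac34v_k\quad\text{in }B_R .
\]
Testing with $\varphi^2/v_k$ and using the convexity estimate already established gives
\[
\int F_0(\nabla\varphi)^2\ge\tfrac34\int\varphi^2\quad\text{for all }\varphi\in C_c^\infty(B_R).
\]
Choosing $R$ large with the rescaled test function $\varphi_R(x)=\varphi_1(x/R)$ contradicts this, as $R\to\infty$. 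So this case is also impossible.

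Both cases lead to a contradiction, so $v\to1$ uniformly in $x'$ as $x_n\to+\infty$; undoing the possible sign change, $v\to\sigma_+$. Repeating the argument on $\{x_n<-L\}$ gives $\sigma_-$. The sign of $v$ on each half-space, together with the two cases above, completes the proof.
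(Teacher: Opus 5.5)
Your proof is correct, but it excludes the degenerate blow-up limit by a different mechanism than the paper does.

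The paper invokes Lemma~\ref{lem:nondegeneracy}. Since the translated balls $B_{R+2R_0}$ eventually avoid $\{v_k=0\}$, it gets $|v_k|\ge\eta_0$ on $B_R$. Hence $|v_\infty|\ge\eta_0$ on $\mathbb R^n$, so $v_\infty$ has no zeros, and Lemma~\ref{lem:no-zero} gives $v_\infty\equiv\sigma_+$ along every sequence with $x_{k,n}\to+\infty$. Uniformity in $x'$ is then a second, separate contradiction argument.

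You only obtain $v_\infty\ge0$. You then use the strong maximum principle to split into $v_\infty>0$ or $v_\infty\equiv0$. You rule out the second case by rerunning the Picone/Rayleigh-quotient argument from the $m=0$ case of Lemma~\ref{lem:no-zero} on the pre-limit functions $v_k$. This is legitimate: $B_R+y_k\subset\{x_n>L\}$ for large $k$, so $0<v_k\le\frac12$ on $B_R$ and $\varphi^2/v_k$ is an admissible test function.

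What your route buys:
\begin{enumerate}
\item It is self-contained within the flat autonomous setting of Section~\ref{sec:slab}. Lemma~\ref{lem:nondegeneracy} is stated on the closed manifold $M$, for the $\varepsilon$-equation with $x$-dependent $A$ and small $\varepsilon$. The paper's use of it for \eqref{e5.1} therefore tacitly relies on its proof carrying over to $\mathbb R^n$ with $\varepsilon=1$, which you avoid.
\item You fold the sequential limit and the uniformity in $x'$ into a single contradiction argument.
\end{enumerate}

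What the paper's route buys: it is shorter, and it yields the quantitative bound $|v_\infty|\ge\eta_0$, so only the $m>0$ part of Lemma~\ref{lem:no-zero} is actually used.

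Both arguments ultimately encode the same fact, the instability of the state $0$ (i.e.\ $W''(0)<0$). The paper expresses it through a negative-energy minimizer plus a sliding comparison; you express it through the Picone inequality.
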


\begin{proof}
Since \(\{v=0\}\subset\{|x_n|\le L\}\), the half-spaces \(\{x_n>L\}\) and \(\{x_n<-L\}\) contain no zeros of \(v\).
By continuity and connectedness, \(v\) has constant sign
on each of them.
Let \(\sigma_+\) and \(\sigma_-\)  be the signs of \(v\)
on these two half-spaces, respectively.

  Let \(x_k=(x'_k,x_{k,n})\) with \(x_{k,n}\to+\infty\), and set
\[
v_k(x):=v(x+x_k).
\]
By Lemma \ref{lem:compactness},  after passing to a subsequence,
\[
v_k\to v_\infty
\quad\text{in }C^1_{\mathrm{loc}}(\mathbb R^n),
\]
where \(v_\infty\) is an entire solution of \eqref{e5.1}. For every fixed \(R>0\), we have
\[
\{v_k=0\}\cap B_{R+2R_0}=\varnothing
\]
for all sufficiently large \(k\).
The nondegeneracy estimate therefore gives
\[
|v_k|\geq\eta_0
\quad\text{in }B_R.
\]
Passing to the limit and using the arbitrariness of \(R\), we obtain
\[
|v_\infty(x)|\geq\eta_0
\qquad\text{for all }x\in\mathbb R^n.
\]
Therefore, from  Lemma \ref{lem:no-zero}, we conclude that
\(v_\infty\equiv\sigma_+\).

Assume by contradiction that the convergence \(v(x',x_n)\to\sigma_+\) as \(x_n\to+\infty\) is not uniform in \(x'\). Then there exist \(\varepsilon>0\) and   a sequence
\(y_k=(y'_k,y_{k,n})\), with \(y_{k,n}\to+\infty\),   such that
\[
|v(y_k)-\sigma_+|\ge\varepsilon.
\]
Repeating the preceding argument, after passing to a subsequence,
\(v(x+y_k)\) converges in \(C^1_{\mathrm{loc}}(\mathbb R^n)\)
to \(w_\infty\equiv\sigma_+\).
Passing to the limit at the origin gives
\[
0
=
|w_\infty(0)-\sigma_+|
=
\lim_{k\to\infty}|v(y_k)-\sigma_+|
\geq\varepsilon,
\]
a contradiction.
Thus the convergence at \(+\infty\) is uniform in \(x'\).

The same argument with \(x_n\to-\infty\) gives the convergence
to \(\sigma_-\), uniformly in \(x'\).
This proves \eqref{e5.5}.

\end{proof}

\begin{lemma}\label{lem:end-comparison}
Let $\delta\in(0,1)$ be small such that
\[
 W''(t)\geq\kappa>0\quad\text{on}\quad[1-\delta,1]\cup[-1,-1+\delta].
\]
Let \(u_1,u_2\) be bounded weak solutions of \eqref{e5.1} in a neighborhood of the upper half-space \(\{x_n\ge T\}\) with
\begin{equation}\label{e0.2}u_1\ge u_2 \quad\text{on}\quad\{x_n=T\}.\end{equation}
Assume that either \(u_1,u_2\in[1-\delta,1]\), or \(u_1,u_2\in[-1,-1+\delta]\), and that
\begin{equation}\label{e0.3}
\liminf_{x_n\to+\infty}(u_1-u_2)\ge0
\end{equation}
uniformly in \(x'\). Then $u_1\ge u_2$ in $\{x_n>T\}.$
\end{lemma}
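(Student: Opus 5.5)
The plan is to run the infimum argument of Lemma~\ref{lem:maximum-principles}(iii) in the half-space $H=\{x_n>T\}$. The key input is that the zeroth-order coefficient is uniformly positive because both functions take values in a range where $W''\ge\kappa$. Set $w=u_1-u_2$. By \eqref{e2.5} there is a symmetric measurable $B$ with $\lambda I\le B\le\Lambda I$ and $A_0(\nabla u_1)-A_0(\nabla u_2)=B\nabla w$. Hence $w$ solves
\[
-\operatorname{div}(B\nabla w)+c\,w=0,\qquad c:=\frac{W'(u_1)-W'(u_2)}{u_1-u_2}=\int_0^1W''\bigl(u_2+t(u_1-u_2)\bigr)\,dt .
\]
Both $u_1$ and $u_2$ lie in the same interval ($[1-\delta,1]$ or $[-1,-1+\delta]$), and that interval is convex. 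So the segment between them stays inside it, and therefore $c\ge\kappa$ everywhere in $H$, not only on $\{w<0\}$.

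Suppose $i:=\inf_H w<0$ and take $x_k\in H$ with $w(x_k)\to i$. I first locate $x_k$. The hypothesis \eqref{e0.3} says $\liminf_{x_n\to\infty}w\ge0$ uniformly in $x'$, so there is $T'>T$ with $w\ge i/2$ on $\{x_n\ge T'\}$. Thus eventually $T<x_{k,n}<T'$, and after passing to a subsequence $x_{k,n}\to t_\infty\in[T,T']$.

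Now translate horizontally only: set $u_\ell^k(x)=u_\ell(x+(x_k',0))$. Lemma~\ref{lem:compactness}, applied on balls inside the neighborhood of $\{x_n\ge T\}$ where the $u_\ell$ are solutions, gives $C^{1,\alpha}$ bounds uniform up to $\{x_n=T\}$. Passing to a subsequence, $u_\ell^k\to\bar u_\ell$ in $C^1_{\rm loc}$ on a neighborhood of $\{x_n\ge T\}$. The limits are solutions, still take values in the same interval, and satisfy $\bar u_1\ge\bar u_2$ on $\{x_n=T\}$. The limit $\bar w=\bar u_1-\bar u_2$ satisfies $\bar w\ge i$ on $\{x_n\ge T\}$ and $\bar w(0',t_\infty)=i<0$. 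Since $\bar w\ge0$ on $\{x_n=T\}$, we get $t_\infty>T$, so the minimum point is interior.

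Finally, near $x_*=(0',t_\infty)$ set $z=\bar w-i\ge0$. With the limiting matrix $B_\infty$ from \eqref{e2.5} and coefficient $c_\infty\ge\kappa$, $z$ satisfies
\[
-\operatorname{div}(B_\infty\nabla z)+c_\infty z=-i\,c_\infty\ge -i\kappa>0 .
\]
So $z$ is a nonnegative strict supersolution vanishing at the interior point $x_*$. This contradicts Lemma~\ref{lem:maximum-principles}(i). Hence $i\ge0$, that is, $u_1\ge u_2$ in $\{x_n>T\}$.

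The main obstacle is the possible loss of the infimum at infinity or at the boundary. Escape in the $x_n$ direction is excluded by the uniform liminf \eqref{e0.3}. Horizontal escape is handled by translating only in $x'$, which keeps the boundary hyperplane fixed; this requires compactness estimates that are uniform up to $\{x_n=T\}$, and these hold because the solutions are defined on a neighborhood of the closed half-space. Escape to the boundary is then ruled out by \eqref{e0.2}, which passes to the limit under $C^0$ convergence.
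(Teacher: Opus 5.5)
Your argument is correct. It shares the paper's skeleton: $c\ge\kappa$ from convexity of the interval, the infimum $i<0$, the uniform $\liminf$ confining $x_{k,n}$ to a bounded range, compactness, and the strict supersolution $z=\bar w-i$ contradicting Lemma~\ref{lem:maximum-principles}(i). The genuine difference is how you keep the minimum point off the boundary.

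The paper translates by the full vector $x_k$, so the boundary hyperplane moves. To compensate, it first proves a boundary H\"older decay $w^-(x',T+t)\le Ct^\beta$, uniform in $x'$, for the subsolution $w^-$, which vanishes on $\{x_n=T\}$. This forces $x_{k,n}\ge T+\rho$, and from then on only interior estimates in the open half-space are needed.

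You instead translate only in $x'$ and keep the hyperplane fixed. The inequality $\bar u_1\ge\bar u_2$ on $\{x_n=T\}$ then passes to the limit and rules out $t_\infty=T$ directly. This avoids the boundary De~Giorgi-type estimate.

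The cost is that you need $C^{1,\alpha}$ bounds, or at least equicontinuity, uniform in $x'$ up to and across $\{x_n=T\}$. That holds only if the solutions live on a uniform strip $\{x_n>T-\epsilon_0\}$. An arbitrary open neighborhood of the closed half-space may thin out as $|x'|\to\infty$, and then the uniform estimates near $\{x_n=T\}$ that you invoke are no longer available.

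This does not matter in the paper, since every comparison there is between entire solutions. Still, you should either state the uniform-strip assumption explicitly or fall back on the paper's boundary decay estimate. The paper's version needs only the equation in the open half-space together with $w\ge 0$ on its boundary.
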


\begin{proof}
Set \(w=u_1-u_2\). By \eqref{e2.5}, there exist a symmetric measurable matrix \(B\) with \(\lambda I\le B\le\Lambda I\) and \(c\in L^\infty\) such that
\begin{equation}\label{e0.4}
-\operatorname{div}(B\nabla w)+c(x)w=0,
\end{equation}
where
$
c(x)=\int_0^1
W''\bigl(u_2(x)+t(u_1(x)-u_2(x))\bigr)\,\mathrm dt
$
and the choice of \(\delta\) gives \(c(x)\ge\kappa>0\).

Assume that \(\inf_{\{x_n>T\}}w=-\mu<0\).
Let \(x_k=(x_k',x_{k,n})\) be a minimizing sequence with
\(w(x_k)\to-\mu\).
By \eqref{e0.3}, there exists \(M>T\) such that
\[
w(x',x_n)\ge-\frac{\mu}{2}
\qquad\text{for }x_n\ge M.
\]
Thus \(x_{k,n}\le M\) for all sufficiently large \(k\).

Set
\(w^-=\max\{-w,0\}\). By \eqref{e0.4} and \(c\ge0\), we have
\[
-\operatorname{div}(B\nabla w^-)\le0
\qquad\text{in }\{x_n>T\}.
\]
Since \(w^-=0\) on \(\{x_n=T\}\), 
 the boundary decay
estimate gives constants \(C>0\) and \(\beta\in(0,1)\) such that
\[
w^-(x',T+t)\le Ct^\beta
\qquad\text{for }0<t<\frac12,
\]
uniformly in \(x'\).
Choose \(\rho\in(0,1/2)\) such that
\(C\rho^\beta<\mu/2\). Then, for all sufficiently large \(k\),
\[
T+\rho\le x_{k,n}\le M.
\]
Passing to a subsequence, we may assume that
\(x_{k,n}\to x_0\in[T+\rho,M]\).
Define
\[
w_k(x):=w(x+x_k)\quad\text{in}\quad \{x_n>T-x_{k,n}\}.
\]
Then
\[
w_k(0)=w(x_k)\to-\mu.
\]
Set
$
H_\infty:=\{x_n>T-x_0\}.
$
By Lemma~\ref{lem:compactness}, after passing to a subsequence,  \(w_k\to w_\infty\)   in \(C^{1}_{\mathrm{loc}}(H_\infty)\) to some \(w_\infty\), where $w_\infty$ satisfies \eqref{e0.4} with $w_\infty\ge-\mu.$ and
Moreover,
\[
w_\infty(0)=\lim_{k\to\infty}w_k(0)=-\mu.
\]
Thus \(w_\infty\) attains its minimum \(-\mu\) at the interior point \(0\).

Set \(\phi=w_\infty+\mu\ge0\). Then, there exists a  symmetric measurable matrix  \(B_\infty\)  with \(\lambda I\le B_\infty\le\Lambda I\) such that
\[
-\operatorname{div}(B_\infty\nabla\phi)+c_\infty\phi
=
-\operatorname{div}(B_\infty\nabla w_\infty)+c_\infty(w_\infty+\mu)
=
\mu c_\infty>0,
\]
where  \(c_\infty\ge\kappa>0\). This contradicts the strong maximum principle, Lemma~\ref{lem:maximum-principles} (i). Hence \(\inf w\ge0\), i.e. \(u_1\ge u_2\) in \(\{x_n>T\}\).
\end{proof}

\begin{lemma}\label{lem:opposite-ends}
Let $v$ be a solution of \eqref{e5.1} with $|v|\leq1$. If $\{v=0\}\neq\emptyset$ and \eqref{e5.4} holds, then
\[
 \sigma_{+}=-\sigma_{-}.
\]
\end{lemma}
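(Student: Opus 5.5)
The plan is to argue by contradiction. Suppose $\sigma_+=\sigma_-$; replacing $v$ by $-v$ if necessary, we may assume $\sigma_+=\sigma_-=1$. By Lemma~\ref{lem:end-states}, $v\to1$ as $|x_n|\to\infty$, uniformly in $x'$. The goal is then to show $v>0$ everywhere, contradicting $\{v=0\}\neq\emptyset$. Rather than comparing $v$ with its own translates (which are not ordered when both ends agree), I would slide a compactly supported strict subsolution underneath $v$.

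\textbf{The subsolution.} As in the proof of Lemma~\ref{lem:nondegeneracy}, with the flat metric and the autonomous integrand $\Phi_0$, there is a minimizer $z$ of the analogue of $\mathcal J$ on $B_{R_0}$ with the following properties:
\begin{itemize}
\item $0<z\le1$ in $B_{R_0}$, $z=0$ on $\partial B_{R_0}$, and $z(0)\ge\eta_0$;
\item for each $\theta\in(0,1)$, \eqref{e3.5} gives $\operatorname{div}A_0(\nabla(\theta z))-W'(\theta z)=\theta(1-\theta^2)z^3>0$ in $B_{R_0}$.
\end{itemize}
Fix $\theta\in(0,1)$ and set $z_y:=\theta z(\cdot-y)$.

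\textbf{Sliding.} Choose $T>L+R_0$ so large that $v\ge1-\delta>\theta$ on $\{|x_n|\ge T-R_0\}$. Then $v>z_y$ on $\overline{B_{R_0}(y)}$ whenever $|y_n|\ge T$. Now take any $y^1$ and a continuous path $y(\tau)$, $\tau\in[0,1]$, from a point with $y_n=T$ to $y(1)=y^1$. Let $\tau^*$ be the supremum of those $\tau$ such that $v>z_{y(\sigma)}$ on $\overline{B_{R_0}(y(\sigma))}$ for all $\sigma\le\tau$. By continuity, $w:=v-z_{y(\tau^*)}\ge0$ on the closed ball. Writing the difference of the two equations with \eqref{e2.5} as in Proposition~\ref{prop:trapping}, $w$ is a nonnegative strict supersolution of a linear equation $-\operatorname{div}(B\nabla w)+cw>0$ with $c\in L^\infty$. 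Lemma~\ref{lem:maximum-principles}(i) then gives $w>0$ in the open ball $B_{R_0}(y(\tau^*))$. Consequently, if $\tau^*<1$, contact must occur on the sphere, at a point $x_0$ with $v(x_0)=z_{y(\tau^*)}(x_0)=0$. If no such contact ever occurs, we conclude $v(y^1)\ge\theta z(0)\ge\theta\eta_0>0$ for every $y^1$, which is the desired contradiction.

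\textbf{The main obstacle} is this boundary contact, where $v(x_0)=0$ on $\partial B_{R_0}(y(\tau^*))$ and $v\ge z_{y(\tau^*)}\ge0$ in the ball. My first attempt would be a Hopf-type argument. Since $v>0$ in the ball, $v(x_0)=0$, and $v$ solves the linear equation $-\operatorname{div}(B\nabla v)+(v^2-1)v=0$ with $v\in C^{1,\alpha}$ (Lemma~\ref{lem:compactness}), the Hopf lemma gives a strictly negative outward normal derivative of $v$ at $x_0$. The bump $z_y$ also vanishes on the sphere, with gradient of size at most $\theta\|\nabla z\|_\infty$. Thus the contradiction must come from comparing the slopes of $v$ and $z_y$ at $x_0$.

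\textbf{Making the slope comparison uniform.} To make this quantitative, I would argue by compactness over contact configurations. Translating so that the center is the origin, the contact configurations $(v(\cdot+y(\tau^*)),x_0)$ range over a precompact family in $C^1_{\mathrm{loc}}$, because the centers have $|y_n|\le T$. This should yield a uniform lower bound $|\nabla v(x_0)|\ge\beta>0$ at any such boundary zero. Then choosing $\theta<\beta/\|\nabla z\|_\infty$ would force $v-z_y<0$ just inside the ball near $x_0$, contradicting $w\ge0$. If this compactness fails, the fallback is to slide $z$ on balls of slightly varying radius, so that the contact can be moved into the interior.
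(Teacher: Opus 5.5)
There is a genuine gap, and it sits exactly at the boundary contact you flagged. At such a point you have $w=v-z_{y(\tau^*)}>0$ in $B_{R_0}(y(\tau^*))$ and $w(x_0)=0$. So $x_0$ minimizes $v$ over the closed ball, and $\nabla v(x_0)=-|\nabla v(x_0)|\,\nu$, with $\nu$ the outward normal.

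If $|\nabla v(x_0)|\ge\beta>\theta\|\nabla z\|_\infty$, then $v$ rises from $0$ into the ball \emph{faster} than $\theta z$ does. Hence $v-z_{y(\tau^*)}>0$ just inside the ball near $x_0$. That is exactly what Hopf's lemma predicts for $w$, and it is consistent with $w\ge0$. The sign in your final step is reversed, and no contradiction arises.

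What actually happens is that the sliding cannot be continued. Just outside the sphere along $\nu$ one has $v<0$. Moving the center further toward $x_0$ brings these points into the ball, where the bump is positive.

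This obstruction is not technical. Your argument uses $\sigma_-=1$ only in choosing $T$, and since every path starts at $y_n=T$, not even there. The contact analysis, the compactness and the gradient lower bound are all local. Run your argument verbatim on the heteroclinic $v(x)=U\bigl(x_n/F_0(e_n)\bigr)$, which has $\sigma_+=1$ and $\sigma_-=-1$. Sliding the bump down from $y_n=T$, it touches $\{x_n=0\}$ from above in precisely the configuration you want to exclude, and your conclusion would give $v>0$ on its zero set.

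A compactly supported subsolution vanishing on the boundary of its support only certifies positivity at points whose $R_0$-neighbourhood avoids the nodal set; this is the content of Lemma~\ref{lem:nondegeneracy}. It cannot cross $\{v=0\}$, so neither compactness nor varying the radius can rescue the argument.

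The missing idea is a comparison function that sees both ends. The paper slides the entire one-dimensional solutions $U_a(x)=U\bigl((x_n-a)/F_0(e_n)\bigr)$ from below.
\begin{itemize}
\item Since $U_a\to-1$ as $x_n\to-\infty$ while $v\to\sigma_-=1$, the $U_a$ stay strictly below $v$ there.
\item On $\{x_n>M\}$ both functions lie in $[1-\delta,1]$, where $W''\ge\kappa$, so Lemma~\ref{lem:maximum-principles}(iii) gives comparison.
\item After showing $\inf_{\{x_n\le M\}}v>-1$, one gets $U_a\le v$ for all large $a$.
\item $a_*=\inf\{a:U_a\le v\}$ is finite because $\{v=0\}\neq\emptyset$.
\item An asymptotic interior touching at $a_*$ would give $\bar v\equiv U_{a_*-b}$ by Lemma~\ref{lem:maximum-principles}(ii), contradicting $\bar v\to1$ as $x_n\to-\infty$.
\item Hence $v-U_{a_*}$ has a positive gap on $\{x_n\le M\}$, and $a$ can be decreased further, contradicting the definition of $a_*$.
\end{itemize}
Here $\sigma_-=1$ enters essentially, and that is precisely what your local bump argument lacks.
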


\begin{proof}

Assume by contradiction that \(\sigma_+=\sigma_-\).
Since \(A_0\) and \(W'\) are odd, replacing \(v\) by
\(-v\) if necessary, we may assume that
\(\sigma_+=\sigma_-=1\).

 For $a\in\R$, define
\begin{equation}\label{e5.6}
 U_{a}(x)=U\left(\frac{x_{n}-a}{F_{0}(e_{n})}\right).
\end{equation}
The one-homogeneity of $A_{0}$ and Euler's identity give
\[
 \operatorname{div}A_{0}(\nabla U_{a})=U''=W'(U_{a}),
\]
which implies that $U_{a}$ is an entire solution of \eqref{e5.1}.

Choose \(\delta\) as in Lemma~\ref{lem:end-comparison} and \(M>L+1\) such that
\[
v\ge 1-\delta \qquad \text{whenever } |x_n|\ge M.
\]
We claim that
\begin{equation}\label{e0.5}
\inf_{\{x_n\le M\}} v > -1.
\end{equation}
Suppose to the contrary that there exists a sequence
\(x_k=(x_k',x_{k,n})\), with \(x_{k,n}\le M\), such that
\(v(x_k)\to-1\).
Since $\sigma_-=1$, we deduce that \(x_{k,n}\) is bounded below. Set $v_k(x):=v(x+x_k)$, then by Lemma \ref{lem:compactness}, after passing to a subsequence,
$
v_k\to v_\infty$
 in $C^1_{\mathrm{loc}}(\mathbb R^n)$,
where \(v_\infty\) is an entire solution of \eqref{e5.1}
satisfying
\[
v_\infty\ge-1,\qquad v_\infty(0)=-1.
\]
By \eqref{e2.5}, the function \(w=v_\infty+1\ge0\)
satisfies
\[
-\operatorname{div}(B_\infty\nabla w)
+v_\infty(v_\infty-1)w=0,
\]
where \(B_\infty\) is symmetric and measurable, with
\(\lambda I\le B_\infty\le\Lambda I\).
Since \(w(0)=0\), Lemma~\ref{lem:maximum-principles}(ii)
gives \(w\equiv0\), i.e., \(v_\infty\equiv-1\).
This is a contradiction with $\lim_{x_n\to\pm\infty}v_\infty(x',x_n)=1$.
Therefore, \eqref{e0.5} holds.


Since $U_{a}\to-1$ uniformly on $\{x_{n}\leq M\}$ as $a\to+\infty$, \eqref{e0.5} gives
\[
 U_{a}<v\qquad\text{on }\{x_{n}\leq M\}
\]
for all sufficiently large $a$. On the other hand, set
\[w:=v-U_a\quad\text{and}\quad \Omega_-:=\{x_n>M: w(x)<0\}.\]
Then,  $1-\delta\leq v<U_a\leq 1$ in $\Omega_-$ and there exists a bounded measurable symmetric matrix  $B$ with $\lambda I\leq B\leq\Lambda I$ such that
\[-\operatorname{div}(B\nabla w)+c(x) w=0,\]
where $c(x)=\frac{W'(v)-W'(U_a)}{v-U_a}\geq\kappa>0$ defined by continuity when \(v=U_a\).
In particular, \(c\ge\kappa>0\) in \(\Omega_-\). Then, by Lemma \ref{lem:maximum-principles} (iii),  $\Omega_-=\emptyset$ and $w\geq0$ in $\{x_n>M\}$. Therefore, we conclude that $U_{a}\leq v$ in $\mathbb{R}^{n}$ for all large $a$.

Set
\[
 a_{*}=\inf\{a\in\mathbb{R}:U_{a}\leq v\text{ in }\mathbb{R}^{n}\}.
\]
The  set is nonempty, so \(a_*<+\infty\).
We also have \(a_*>-\infty\).
 Indeed, if $a_{*}=-\infty$, then  \(U_a\le v\) for every \(a\in\mathbb R\). Letting \(a\to-\infty\), for any fixed \(x\in\mathbb R^n\) we have
\[
\frac{x_n-a}{F_0(e_n)}\to+\infty,
\]
and hence \(U_a(x)\to 1\). It follows that $v(x)\ge 1$ for every $x\in\mathbb R^n$. Since \(|v|\le 1\), we conclude \(v\equiv 1\), which is a contradiction with the assumption $\{v=0\}\neq\emptyset$.

We claim that
\begin{equation}\label{e5.8}
\inf_{\{x_n\le M\}}(v-U_{a_*})>0.
\end{equation}
Otherwise, there exists a sequence
\(x_k=(x_k',x_{k,n})\), with \(x_{k,n}\le M\), such that
\[
(v-U_{a_*})(x_k)\to0.
\]
The sequence \(x_{k,n}\) is bounded below.
Indeed, if a subsequence satisfied \(x_{k,n}\to-\infty\), then
$0=\lim_{k\to\infty}(v-U_{a_*})=1-(-1)=2$, this is a contradiction.  Passing to a subsequence, we may assume that
\(x_{k,n}\to b\in\mathbb R\).
Set \(v_k(x):=v(x+x_k)\).
By Lemma~\ref{lem:compactness}, after passing to a further
subsequence,
$
v_k\to\bar v$
 in $C^1_{\mathrm{loc}}(\mathbb R^n).$
Since
$
U_{a_*}(x+x_k)=U_{a_*-x_{k,n}}(x),
$
we obtain
\[
\bar v\ge U_{a_*-b},
\qquad
\bar v(0)=U_{a_*-b}(0).
\]
Applying \eqref{e2.5} and
Lemma~\ref{lem:maximum-principles}(ii) to
\(\bar v-U_{a_*-b}\), we obtain
$
\bar v\equiv U_{a_*-b}.
$
This is impossible because
$
1
=
\lim_{x_n\to-\infty}\bar v(x',x_n)
=
\lim_{x_n\to-\infty}U_{a_*-b}(x',x_n)
=
-1.
$
 Therefore,   \eqref{e5.8} holds.


Set
$
\varepsilon:=\frac12
\inf_{\{x_n\le M\}}(v-U_{a_*})>0.
$
The derivative $\partial_{a}U_{a}=-{U'\left(\frac{x_n-a}{F_0(e_n)}\right)}/{F_0(e_n)}$ is uniformly bounded.  Hence \eqref{e5.8} allows us to choose $\tau>0$ small enough  such that
\[
 U_{a_{*}-\tau}\leq U_{a_{*}}+\varepsilon\leq v\qquad\text{on}\quad\{x_{n}\leq M\}.
\]
Set $\bar\Omega_-:=\{x>M: v<U_{a_{*}-\tau}\}$, then $1-\delta\leq v<U_{a_{*}-\tau}\leq 1$. It follows from Lemma \ref{lem:maximum-principles} (iii) that $\bar\Omega_-=\emptyset$ and thus $v\geq U_{a_{*}-\tau}$ in $\{x>M\}$, which is a contradiction with the definition of $a_{*}$. Therefore, we conclude that $\sigma_+=-\sigma_-$.

\end{proof}

\begin{lemma}\label{lem:oblique}
Assume  that  $\sigma_+=1$ and $\sigma_-=-1$. Then, for every  $e\in\mathbb{R}^{n}$ with $e\cdot e_{n}>0$ and every $h>0$,
\begin{equation}\label{e5.12}
 v(x+he)\geq v(x)\qquad\forall x\in\mathbb{R}^{n}.
\end{equation}
Conversely, if $\sigma_+=-1$ and $\sigma_-=1$, then  $v(x+he)\leq v(x)$ for all $x\in\mathbb{R}^{n}$.
\end{lemma}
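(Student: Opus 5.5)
We use the standard sliding method of Berestycki–Caffarelli–Nirenberg and Farina, in the form already set up in Lemmas~\ref{lem:end-states}–\ref{lem:opposite-ends}. The key point is that \(\Phi_0\) is autonomous, so every translate \(v^t(x):=v(x+te)\) is again a solution of \eqref{e5.1}. Hence \(w_t:=v^t-v\) satisfies a linear equation \(-\operatorname{div}(B_t\nabla w_t)+c_tw_t=0\), with \(\lambda I\le B_t\le\Lambda I\) coming from \eqref{e2.5} and
\[
c_t=\frac{W'(v^t)-W'(v)}{v^t-v}\in L^\infty .
\]
Fix \(e\) with \(e\cdot e_n>0\), and choose \(\delta\) as in Lemma~\ref{lem:end-comparison}. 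By Lemma~\ref{lem:end-states}, there is \(M>L\) such that \(v\ge1-\delta\) on \(\{x_n\ge M\}\) and \(v\le-1+\delta\) on \(\{x_n\le-M\}\). We aim to show that \(w_t\ge0\) for all large \(t\), and then slide \(t\) down to \(0\).

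\textbf{Step 1 (large \(t\)).} Take \(t\) so large that \(t\,e\cdot e_n\ge 2M\). On the slab \(\{|x_n|\le M\}\) we have \(x_n+t e\cdot e_n\ge M\), hence \(v^t\ge1-\delta\). We want \(v^t\ge v\) there. Using the uniform convergence in \eqref{e5.5}, it suffices to take \(t\) even larger so that \(v^t\ge \sup_{\{|x_n|\le M\}}v\). Here we need \(\sup_{\{|x_n|\le M\}} v<1\), which holds by a compactness argument and Lemma~\ref{lem:maximum-principles}(ii), exactly as in the proof of \eqref{e0.5}. We then compare in the two half-spaces.

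\begin{itemize}
\item On \(\{x_n\ge M\}\), both \(v\) and \(v^t\) lie in \([1-\delta,1]\). We have \(w_t\ge0\) on \(\{x_n=M\}\), and \(\liminf w_t\ge0\) as \(x_n\to\infty\) uniformly by \eqref{e5.5}. Lemma~\ref{lem:end-comparison} then gives \(w_t\ge0\).
\item On \(\{x_n\le -M\}\), \(v\le -1+\delta\). On the set \(\{v^t<v\}\) this forces \(v^t\in[-1,-1+\delta]\) as well. The reflected version of Lemma~\ref{lem:end-comparison} (or Lemma~\ref{lem:maximum-principles}(iii) in the half-space, with \(c_t\ge\kappa\) on \(\{w_t<0\}\)) gives \(w_t\ge0\).
\end{itemize}

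\textbf{Step 2 (sliding).} Set \(t_*:=\inf\{t>0: w_s\ge0 \text{ for all } s\ge t\}\) and suppose \(t_*>0\). By continuity \(w_{t_*}\ge0\).

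We first claim that \(\inf_{\{|x_n|\le M\}}w_{t_*}>0\). Suppose instead that there are points \(x_k\) in the slab with \(w_{t_*}(x_k)\to0\). Translate by \(x_k'\) and pass to a limit using Lemma~\ref{lem:compactness}. We obtain a solution \(\bar v\), with the same uniform end states and zero set in the slab, satisfying \(\bar v(\cdot+t_*e)\ge \bar v\) with equality at an interior point. Lemma~\ref{lem:maximum-principles}(ii) then gives \(\bar v(\cdot+t_*e)\equiv\bar v\). Iterating, \(\bar v(x+kt_*e)=\bar v(x)\) for all \(k\in\mathbb N\). Letting \(k\to\infty\) gives \(1=\bar v(x)\) for every \(x\), while \(\bar v\to-1\) as \(x_n\to-\infty\), a contradiction.

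Since \(|\partial_t v^t|\le\|\nabla v\|_\infty\), the strict positivity on the slab persists for \(t\in(t_*-\tau,t_*]\) with \(\tau>0\) small. Step 1's half-space comparisons then apply verbatim to give \(w_t\ge0\) everywhere for these \(t\). This contradicts the definition of \(t_*\), so \(t_*=0\), which proves \eqref{e5.12}. The converse case follows by replacing \(v\) with \(-v\), using that \(A_0\) and \(W'\) are odd.

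\textbf{Main obstacle.} The hardest step is the half-space comparison on the negative side in Step 1, and its reuse in Step 2. There only \(v\) is known to lie near \(-1\), so one needs the form of Lemma~\ref{lem:maximum-principles}(iii) in which \(c_t\ge\kappa\) is required only on \(\{w_t<0\}\). This form must be adapted to a half-space with a nonuniform (only \(\liminf\)) condition at infinity. I would handle it as in Lemma~\ref{lem:end-comparison}: take a minimizing sequence, use boundary Hölder decay to keep it away from \(\{x_n=-M\}\), pass to a translated limit, and apply Lemma~\ref{lem:maximum-principles}(i). The other delicate point is the compactness argument in Step 2, where the end states must be passed to the limit using the uniformity in \eqref{e5.5}.
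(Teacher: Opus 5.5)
Your proposal is correct and follows essentially the paper's sliding argument: start from large translations, take the critical translation, rule it out by a compactness/periodicity contradiction on the slab $\{|x_n|\le M\}$ combined with the gradient bound, and handle the two half-spaces with Lemma~\ref{lem:maximum-principles}(iii) and Lemma~\ref{lem:end-comparison}. The only real difference is the initialization: the paper does not need your auxiliary bound $\sup_{\{|x_n|\le M\}}v<1$, because once $he\cdot e_n>2M$, at every point of $\{v(\cdot+he)<v\}$ both values automatically lie in the same end interval, $[1-\delta,1]$ or $[-1,-1+\delta]$, so Lemma~\ref{lem:maximum-principles}(iii) applies on all of $\mathbb{R}^n$ at once.
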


\begin{proof}
Fix \(e\in\mathbb R^n\) with \(e\cdot e_n>0\), and set
\[
 T_{h}v(x)=v(x+he)-v(x).
\]
Choose \(\delta\) as in Lemma~\ref{lem:end-comparison}
and \(M>0\) sufficiently large such that
\[
v\le-1+\delta\quad\text{for }x_n\le-M,
\qquad
v\ge1-\delta\quad\text{for }x_n\ge M.
\]

Let $he\cdot e_{n}>2M$, and set $w(x):=v(x+he)-v(x)$. Then, there exists measurable symmetric matrix $B$ with $\lambda I\leq B\leq\Lambda I$ such that
\[-\operatorname{div}(B\nabla w)+c(x)w=0\quad\text{in}\quad\{w<0\},\]
where $c(x)=\frac{W'(v(x+he))-W'(v(x))}{v(x+he)-v(x)}$ defined by continuity when \(v(x+he)=v(x)\). If \(x_n\le-M\), then on \(\{w<0\}\) we have
\[
-1\le v(x+he)<v(x)\le-1+\delta.
\]
If \(x_n>-M\), then
$
x_n+he\cdot e_n>-M+2M=M,
$
and hence, on \(\{w<0\}\),
\[
1-\delta\le v(x+he)<v(x)\le1.
\]
Therefore, the mean value theorem gives
\(c(x)\ge\kappa>0\) on \(\{w<0\}\).
By Lemma~\ref{lem:maximum-principles}(iii), we obtain
\(\{w<0\}=\emptyset\). Thus
\begin{equation}\label{e0.6}
T_hv\ge0
\qquad\text{if }he\cdot e_n>2M.
\end{equation}

Let
\[
 \mathcal{S}
 =
 \left\{
 h>0:
 T_{\tau}v\geq0\text{ on }\{|x_{n}|\leq M\}
 \text{ for every }\tau\geq h
 \right\}.
\]
By \eqref{e0.6}, \(\mathcal S\ne\emptyset\).
Moreover, if \(h\in\mathcal S\) and \(h'\ge h\), then
\(h'\in\mathcal S\).
Set \(h_0=\inf\mathcal S\).
By the definition of \(h_0\) and the continuity of \(v\),
\[
T_\tau v\ge0
\qquad\text{on }\{|x_n|\le M\}
\quad\text{for every }\tau\ge h_0.
\]
Let \(w_0:=T_{h_0}v\), then $\{w_0<0\}\subset\{|x_n|>M\}$.
 Applying Lemma \ref{lem:maximum-principles} (iii) to $w_0$ and arguing as before, we deduce that
\[
T_{h_0}v\ge0\qquad\text{in }\mathbb R^n.
\]

Suppose that \(h_0>0\).
Choose \(s_k\in(0,h_0)\) with \(s_k\to h_0\).
Since \(s_k\notin\mathcal S\), while
\(T_\tau v\ge0\) on \(\{|x_n|\le M\}\) for every
\(\tau\ge h_0\), there exist
\(\tau_k\in[s_k,h_0)\) and
\(x_k=(x_k',x_{k,n})\), with \(|x_{k,n}|\le M\), such that
\[
T_{\tau_k}v(x_k)<0.
\]
In particular, \(\tau_k\to h_0\).  Set
\[
v_k(x):=v(x'+x_k',x_n).
\]
By Lemma~\ref{lem:compactness}, after passing to a subsequence,
\(v_k\) converges to an entire solution \(\bar v\) in
\(C^1_{\mathrm{loc}}(\mathbb R^n)\) with
\[
\lim_{x_n\to-\infty}\bar v(x',x_n)=-1,
\qquad
\lim_{x_n\to+\infty}\bar v(x',x_n)=1
\]
uniformly in \(x'\).

The uniform interior gradient estimate gives
$
\|\nabla v\|_{L^\infty(\mathbb R^n)}\le C.
$
Consequently,
\[
0\le T_{h_0}v(x_k)
\le T_{\tau_k}v(x_k)+C|e|(h_0-\tau_k)
<C|e|(h_0-\tau_k)\to0.
\]
Therefore,
\[
v_k((0,x_{k,n})+h_0e)-v_k(0,x_{k,n})
=
v(x_k+h_0e)-v(x_k)
\to0.
\]
After passing to a further subsequence, we may assume that
\(x_{k,n}\to x_{0,n}\in[-M,M]\).
Set \(x_0=(0,x_{0,n})\). Then
$
\bar v(x_0+h_0e)-\bar v(x_0)=0.
$
Moreover, since \(T_{h_0}v\ge0\) in \(\mathbb R^n\),
$
\bar w(x):=\bar v(x+h_0e)-\bar v(x)\ge0
\qquad\text{in }\mathbb R^n.
$
By \eqref{e2.5} and
Lemma~\ref{lem:maximum-principles}(ii), the equality
\(\bar w(x_0)=0\) implies that \(\bar w\equiv0\).
Hence
\[
\bar v(x+h_0e)=\bar v(x)
\qquad\text{for all }x\in\mathbb R^n.
\]
It follows that, for every \(m\in\mathbb Z\),
\[
\bar v(x+mh_0e)=\bar v(x).
\]
Since \(h_0e\cdot e_n>0\) and the limits at both ends
are uniform in \(x'\), we obtain
\[
\bar v(x)
=\lim_{m\to+\infty}\bar v(x+mh_0e)=1,
\qquad
\bar v(x)
=\lim_{m\to-\infty}\bar v(x+mh_0e)=-1,
\]
a contradiction.


Therefore \(h_0=0\), and every \(h>0\) belongs to
\(\mathcal S\). In particular,
\[
v(x+he)\ge v(x)
\qquad\text{on }\{|x_n|\le M\}.
\]
Fix \(h>0\) and set \(w:=v(x+he)-v(x)\).
On \(\{x_n>M\}\cap\{w<0\}\), we have
\[
1-\delta\le v(x+he)<v(x)\le1.
\]
Since \(w\ge0\) on \(\{x_n=M\}\),
By Lemma~\ref{lem:maximum-principles}(iii), we obtain \(w\ge0\) in
\(\{x_n>M\}\).

\noindent Similarly, on \(\{x_n<-M\}\cap\{w<0\}\),
\[
-1\le v(x+he)<v(x)\le-1+\delta.
\]
Since \(w\ge0\) on \(\{x_n=-M\}\), the same comparison
principle gives \(w\ge0\) in \(\{x_n<-M\}\).
Thus
\[
v(x+he)\ge v(x)
\qquad\text{for all }x\in\mathbb R^n,\quad h>0.
\]

If instead \(\sigma_+=-1\) and \(\sigma_-=1\), then
\(-v\) also solves \eqref{e5.1}, since \(A_0\) and \(W'\)
are odd. Applying the preceding argument to \(-v\) gives
\(v(x+he)\le v(x)\).


\end{proof}

\begin{proposition}\label{thm:slab}
Let \(v\) be a bounded entire weak solution of \eqref{e5.1}
with \(|v|\le1\), $\{v=0\}\neq\emptyset$ and satisfies
\eqref{e5.4}. Then, after possibly replacing \(v\) by \(-v\),
there is \(a\in[-L,L]\) such that
\begin{equation}\label{e5.9}
v(x)=U\left(\frac{x_n-a}{F_0(e_n)}\right).
\end{equation}
\end{proposition}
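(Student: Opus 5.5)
By Lemma~\ref{lem:end-states} the end states $\sigma_\pm$ exist, uniformly in $x'$. By Lemma~\ref{lem:opposite-ends} we have $\sigma_+=-\sigma_-$. Since $A_0$ and $W'$ are odd, after replacing $v$ by $-v$ we may assume $\sigma_+=1$ and $\sigma_-=-1$. Lemma~\ref{lem:oblique} then gives $v(x+he)\ge v(x)$ for every $h>0$ and every $e$ with $e\cdot e_n>0$.

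\textbf{Step 1: $v$ is one-dimensional.} Fix a direction $e'$ in $\{x_n=0\}$ and apply the monotonicity to $e_k=e'+\frac1k e_n$. This gives $v(x+he_k)\ge v(x)$, and letting $k\to\infty$ (using continuity of $v$) gives $v(x+he')\ge v(x)$ for all $h>0$. Applying the same argument to $-e'$ gives the reverse inequality. Hence $v(x+he')=v(x)$, so $v$ is independent of $x'$, and we write $v(x)=\psi(x_n)$. Moreover $\psi$ is nondecreasing, by the case $e=e_n$.

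\textbf{Step 2: identifying the profile.} For $v=\psi(x_n)$ we have $\nabla v=\psi'e_n$. By one-homogeneity and oddness of $A_0$,
\[
A_0(\nabla v)=\psi'A_0(e_n),
\]
and by Euler's identity $A_0(e_n)\cdot e_n=F_0(e_n)^2$. The weak form of \eqref{e5.1}, tested against functions of $x_n$ times cutoffs in $x'$, therefore reduces to the one-dimensional weak equation
\[
F_0(e_n)^2\psi''=W'(\psi).
\]
Set $\phi(t):=\psi(F_0(e_n)t)$. Then $\phi''=W'(\phi)$ on $\mathbb R$, with $\phi(\pm\infty)=\pm1$, and $\phi$ is $C^2$ by ODE bootstrapping from $C^{1,\alpha}$.

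Multiplying by $\phi'$ and integrating from $-\infty$ gives the conserved quantity
\[
\tfrac12\phi'^2-W(\phi)=\lim_{t\to-\infty}\bigl(\tfrac12\phi'^2-W(\phi)\bigr).
\]
This limit is $0$, because $W(\phi)\to0$ and $\phi'\to0$; the latter follows from $C^{1,\alpha}$ convergence of translates to the constant $-1$, as in Lemma~\ref{lem:end-states}. Hence $\phi'=\sqrt{2W(\phi)}$, where we use $\phi'\ge0$. The right-hand side is locally Lipschitz away from $\pm1$, and $\phi$ stays in $(-1,1)$: if it touched $\pm1$, the strong maximum principle (Lemma~\ref{lem:maximum-principles}(ii)) would force it to be constant. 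By uniqueness for this autonomous ODE, $\phi(t)=U(t-t_0)$, where $t_0$ is the zero of $\phi$. Thus
\[
v(x)=U\left(\frac{x_n-a}{F_0(e_n)}\right),\qquad a=F_0(e_n)t_0.
\]

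\textbf{Step 3: locating $a$.} The hyperplane $\{x_n=a\}$ is exactly the zero set of $v$. It is nonempty, and by \eqref{e5.4} it is contained in $\{|x_n|\le L\}$, so $a\in[-L,L]$.

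The main obstacle is Step 1, specifically the passage from oblique directions to the horizontal direction. I expect this to be routine, because Lemma~\ref{lem:oblique} holds for every $e$ with $e\cdot e_n>0$, with no uniformity in $e$ required, and the limit $k\to\infty$ is taken pointwise. The only remaining care point is justifying $\phi'\to0$ at $-\infty$, which follows from the compactness in Lemma~\ref{lem:compactness} applied to translates.
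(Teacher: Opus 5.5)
Your proposal is correct and follows the paper's proof step for step. Like the paper, you fix the end states, pass the oblique monotonicity of Lemma~\ref{lem:oblique} to horizontal directions by letting the $e_n$-component tend to zero, reduce to $F_0(e_n)^2 f''=W'(f)$, show the energy constant vanishes, and read off $a\in[-L,L]$ from the zero set. The only small deviation is how you get $\phi'\to0$ at $-\infty$: you use compactness of translates as in Lemma~\ref{lem:end-states}, whereas the paper uses boundedness of $f''$ together with $f'\ge0$ and $\int_{\mathbb R} f'=2$; both arguments are valid.
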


\begin{proof}
By Lemmas~\ref{lem:end-states} and \ref{lem:opposite-ends},
after replacing \(v\) by \(-v\) if necessary, we may assume
that \(\sigma_+=1\) and \(\sigma_-=-1\).

Let \(\zeta\perp e_n\) and \(h>0\).
For \(\delta>0\), applying \eqref{e5.12} to
\(\zeta+\delta e_n\), we obtain
\[
v(x+h(\zeta+\delta e_n))\ge v(x).
\]
Letting \(\delta\to0\), the continuity of \(v\) yields
\[
v(x+h\zeta)\ge v(x).
\]
Replacing \(\zeta\) by \(-\zeta\), we obtain
$
v(y-h\zeta)\ge v(y)$
 for every $y\in\mathbb R^n.$
Taking \(y=x+h\zeta\), we get
\[
v(x)\ge v(x+h\zeta).
\]
Therefore,
$
v(x+h\zeta)=v(x)$
 for all $\zeta\perp e_n,\quad h>0,$
and hence
\[
v(x)=f(x_n).
\]


Substituting this into \eqref{e5.1} and using $A_0(e_n)\cdot e_n=F_0(e_n)^2$, we deduce that
\begin{equation}\label{e5.14}
F_0(e_n)^2 f''=W'(f).
\end{equation}
Taking \(e=e_n\) in \eqref{e5.12} shows that $f(x_n+h)\geq f(x_n)$, i.e. \(f\) is nondecreasing. Multiplying \eqref{e5.14} by \(f'\) and integrating gives
\[
\frac{F_0(e_n)^2}{2}(f')^2-W(f)=C.
\]
Since  \(f\) is nondecreasing  and \(\lim_{x_n\to\pm\infty}f=\pm1\), we have
$
f'\ge0$ and $
\int_{\mathbb R}f'(t)\,\mathrm dt=2.$
Moreover, \eqref{e5.14} and \(|f|\le1\) imply that
\(f''\) is bounded. Thus \(f'\) is uniformly continuous,
and consequently \(f'(t)\to0\) as \(t\to\pm\infty\).
Hence \(C=0\), and
\[
\frac{F_0(e_n)^2}{2}(f')^2=W(f).
\]

Let \(a\) be the unique zero of \(f\).
By separation of variables, we obtain
\[
v(x)=U\left(\frac{x_n-a}{F_0(e_n)}\right).
\]
Finally, \(\{v=0\}=\{x_n=a\}\), so \eqref{e5.4}
implies \(a\in[-L,L]\).
This proves \eqref{e5.9}.


\end{proof}

\section{Proof of Theorem \ref{thm:main}}\label{sec:energy}

Decrease \(\rho\) if necessary so that
\(N_{2\rho}(\Gamma)\subset\subset N_{r/2}(\Gamma)\),
where \(r\) is as in Proposition~\ref{prop:trapping},
and the normal exponential map is a diffeomorphism on
\(\Gamma\times(-2\rho,2\rho)\).
Fix a unit normal \(\nu_\Gamma\) and define
\(\Psi:\Gamma\times(-2\rho,2\rho)\to N_{2\rho}(\Gamma)\) by
\begin{equation}\label{e6.1}
\Psi(p,t)=\exp_p(t\nu_\Gamma(p)).
\end{equation}
Let \(J(p,t)\) be the corresponding volume Jacobian, so that
\[
\mathrm dV_g
=
J(p,t)\,\mathrm d\mathcal H_g^{n-1}(p)\,\mathrm dt,
\qquad J(p,0)=1.
\]


\begin{proposition}\label{prop:blowup}
Let $u_j$, $\varepsilon_j$ and $\Gamma$ be as in Theorem \ref{thm:main} and let $p_{j}\in\Gamma$ and $p_{j}\to p$. Define
\begin{equation}\label{e6.2}
 v_{j}(y)=u_{j}\bigl(\exp_{p_{j}}(\varepsilon_{j}y)\bigr),
\end{equation}
where \(\exp_{p_j}\) is the exponential map at \(p_j\).
After passing to a subsequence, $v_{j}$ converges locally in $C^{1,\alpha'}$ to an entire solution of
\begin{equation}\label{e6.3}
 \operatorname{div}\bigl(A(p,\nabla v)\bigr)-W'(v)=0
 \qquad\text{in }T_{p}M\simeq\R^{n}.
\end{equation}
The limit is either $1$, $-1$, or
\begin{equation}\label{e6.4}
 \pm U\left(\frac{y\cdot\nu_{\Gamma}(p)-a}{F(p,\nu_{\Gamma}(p))}\right)
\end{equation}
for a bounded constant $a$.
\end{proposition}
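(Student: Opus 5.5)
The plan is to pass to the blow-up limit with the compactness lemma and then classify the limit with Proposition~\ref{thm:slab}, using Proposition~\ref{prop:trapping} for the slab hypothesis.

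\textbf{Step 1: the rescaled equation.} In normal coordinates at \(p_j\), set \(g_j(y)=(\exp_{p_j}^*g)(\varepsilon_j y)\). Then \(v_j\) solves
\[
-\operatorname{div}_{g_j}\bigl(A(\exp_{p_j}(\varepsilon_j y),\nabla_{g_j}v_j)\bigr)+W'(v_j)=0
\]
on balls \(B_{R}\subset T_{p_j}M\) whenever \(\varepsilon_jR<2\rho\). As in the proof of Lemma~\ref{lem:nondegeneracy}, \(g_j\to\delta\) in \(C^1_{\mathrm{loc}}\) uniformly in \(p_j\).

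\textbf{Step 2: compactness and the limit equation.} I would write the equation in Euclidean divergence form with
\[
A_j(y,\xi)=\sqrt{\det g_j}\,g_j^{-1}A\bigl(\exp_{p_j}(\varepsilon_j y),g_j^{-1}\xi\bigr),
\]
after identifying \(T_{p_j}M\) with \(\mathbb R^n\) by a parallel orthonormal frame converging to one at \(p\). These fields satisfy \eqref{e2.3}, \eqref{e2.5} and \eqref{eq:base-A} with constants close to \(\lambda,\Lambda\), and \(A_j(y,0)=0\). Moreover \(A_j\to A(p,\cdot)\) locally uniformly, because \(\exp_{p_j}(\varepsilon_j y)\to p\) and \(g_j\to\delta\). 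Since \(|v_j|\le1\), Lemma~\ref{lem:compactness} gives \(C^{1,\alpha}\) bounds on every \(B_R\). A diagonal subsequence then converges in \(C^{1,\alpha'}_{\mathrm{loc}}(\mathbb R^n)\) to an entire weak solution \(v\) of \eqref{e6.3} with \(|v|\le1\).

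\textbf{Step 3: slab confinement of \(\{v=0\}\).} By Proposition~\ref{prop:trapping}, \(\{u_j=0\}\cap N_{r/2}(\Gamma)\subset N_{C\varepsilon_j}(\Gamma)\). Near \(p_j\), \(\Gamma\) is a normal graph over \(T_{p_j}\Gamma\) with uniformly bounded second fundamental form. So a point \(\exp_{p_j}(\varepsilon_j y)\) with \(|y|\le R\) that lies within \(C\varepsilon_j\) of \(\Gamma\) satisfies
\[
|y\cdot\nu_\Gamma(p_j)|\le C+C\varepsilon_jR^2.
\]
Now suppose \(v(y_0)=0\). If \(|y_0\cdot\nu_\Gamma(p)|>C+1\), then \(v\) has a fixed sign on a small ball \(B\) around \(y_0\), since \(v_j\) vanishes nowhere near \(y_0\) for large \(j\). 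Then \(v\) would be a nonnegative (or nonpositive) solution vanishing at an interior point, so Lemma~\ref{lem:maximum-principles}(ii), applied to \(\pm v\) in the form used in Lemma~\ref{lem:no-zero}, gives \(v\equiv0\) on \(B\). The unique continuation implicit in (ii), on the connected complement of the slab, then gives \(v\equiv0\) on the whole region \(\{|y\cdot\nu_\Gamma(p)|>C+1\}\). This contradicts Lemma~\ref{lem:nondegeneracy}: far from \(\Gamma\) in rescaled units, \(u_j\) has no zeros in \(B_{2R_0\varepsilon_j}\), so \(|v_j|\ge\eta_0\) there. Hence \(\{v=0\}\subset\{|y\cdot\nu_\Gamma(p)|\le L\}\) with \(L=C+1\).

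\textbf{Step 4: classification.} If \(\{v=0\}=\varnothing\), Lemma~\ref{lem:no-zero} gives \(v\equiv\pm1\). Otherwise I rotate coordinates so that \(\nu_\Gamma(p)=e_n\) and apply Proposition~\ref{thm:slab} with \(F_0=F(p,\cdot)\). This requires checking that \(F_0\) is even, one-homogeneous and satisfies \eqref{e1.6}, all inherited from \(F\). The result is \eqref{e6.4} with \(|a|\le L\).

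The main obstacle is Step 3: making the local graph estimate uniform in \(j\), and excluding zeros of the limit that are not limits of zeros of \(v_j\). I would handle the latter with the nondegeneracy bound \(|v_j|\ge\eta_0\) off the slab, which passes to the limit as \(|v|\ge\eta_0\) there. This is cleaner than the maximum-principle route and directly gives \(\{v=0\}\) inside the slab.
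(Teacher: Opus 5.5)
Your proposal is correct and follows essentially the same route as the paper's proof: rescale in normal coordinates, use Lemma~\ref{lem:compactness} for uniform $C^{1,\alpha}$ bounds and a limit solving \eqref{e6.3}, confine $\{v=0\}$ to a slab by combining Proposition~\ref{prop:trapping} with the nondegeneracy bound $|v_j(y)|\ge\eta_0$ passed to the limit, and then classify with Lemma~\ref{lem:no-zero} and Proposition~\ref{thm:slab}. Two small adjustments: for the direct nondegeneracy route the slab must include the $2R_0$ margin, e.g.\ $L=C_0+2R_0+1$ as in the paper, and the maximum-principle detour in your Step 3 is unnecessary, since Lemma~\ref{lem:maximum-principles}(ii) propagates $v\equiv0$ only once $v$ has a sign on the whole half-space, not by unique continuation from a ball.
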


\begin{proof}

 Choose a smooth orthonormal frame near \(p\) such that
\(e_n=\nu_\Gamma\) on \(\Gamma\).

Set
\[
g_j(y):=(\exp_{p_j}^*g)_{\varepsilon_j y},
\qquad
P_j(y):=\mathrm d(\exp_{p_j})_{\varepsilon_j y},
\]
and define
\[
A_j(y,\xi)
:=
P_j(y)^{-1}
A\bigl(\exp_{p_j}(\varepsilon_j y),P_j(y)\xi\bigr).
\]
For each fixed \(R>0\) and all sufficiently large \(j\),
\[
\operatorname{div}_{g_j}
A_j(y,\nabla_{g_j}v_j)-W'(v_j)=0
\qquad\text{in }B_R.
\]

In these normal coordinates,
\[
(g_j)_{kl}(y)
=
\delta_{kl}+O(\varepsilon_j^2|y|^2),
\]
and \(g_j\) converges to the Euclidean metric in
\(C^1_{\mathrm{loc}}\).
By \eqref{eq:base-A}, for every fixed \(R>0\) and all sufficiently large \(j\),
\[
|A_j(y,\xi)-A(p,\xi)|
\le
C_R\bigl(d_g(p_j,p)+\varepsilon_j\bigr)|\xi|
\qquad\text{for }|y|\le R.
\]
Thus \(A_j(y,\xi)\to A(p,\xi)\) locally uniformly.

In Euclidean coordinates, the rescaled equation  is
\[
\operatorname{div}\left(
\sqrt{\det g_j}\,
A_j(y,g_j^{-1}\nabla v_j)
\right)
-
\sqrt{\det g_j}\,W'(v_j)=0
\]
with \(|v_j|\le1\).
The local regularity estimates used in
Lemma~\ref{lem:compactness} therefore give uniform
\(C^{1,\alpha}\) bounds.
After passing to a subsequence,
\[
v_j\to v
\qquad\text{in }C^{1,\alpha'}_{\mathrm{loc}}(\mathbb R^n)
\quad\text{for every }0<\alpha'<\alpha.
\]
Passing to the limit in the weak equation, we obtain
\eqref{e6.3}, with \(|v|\le1\).

By Proposition~\ref{prop:trapping}, there exists \(C_0>0\)
such that, for all sufficiently large \(j\),
\[
\{u_j=0\}\cap N_{r/2}(\Gamma)
\subset N_{C_0\varepsilon_j}(\Gamma).
\]
A Taylor expansion of the signed distance to \(\Gamma\)
gives
\[
\frac{\operatorname{dist}_g
(\exp_{p_j}(\varepsilon_j y),\Gamma)}
{\varepsilon_j}
=
|y_n|+O(\varepsilon_j|y|^2)
\]
locally uniformly in \(y\).
Set \(L=C_0+2R_0+1\).
For any fixed \(y\) with \(|y_n|>L\), the preceding
estimate and Proposition~\ref{prop:trapping} imply that,
for all sufficiently large \(j\),
\[
B_{2R_0\varepsilon_j}
\bigl(\exp_{p_j}(\varepsilon_j y)\bigr)
\cap\{u_j=0\}=\emptyset.
\]
Lemma~\ref{lem:nondegeneracy} then gives
$
|v_j(y)|\ge\eta_0
$ for large \(j\).
Passing to the limit yields \(|v(y)|\ge\eta_0\).
Consequently,
\[
\{v=0\}\subset\{|y_n|\le L\}.
\]
If \(\{v=0\}=\emptyset\), Lemma~\ref{lem:no-zero}
gives \(v\equiv1\) or \(v\equiv-1\).
If \(\{v=0\}\ne\emptyset\), then
Proposition~\ref{thm:slab} gives
\[
v(y)
=
\pm U\left(\frac{y_n-a}{F(p,e_n)}\right)
=
\pm U\left(
\frac{y\cdot\nu_\Gamma(p)-a}{F(p,\nu_\Gamma(p))}
\right),
\]
with \(|a|\le L\).
This proves \eqref{e6.4}.


\end{proof}

We now use Proposition \ref{prop:blowup} to estimate the energy in a tubular neighborhood of \(\Gamma\).
For $R>0$ and $p\in\Gamma$, define
\begin{equation}\label{e6.5}
 I_{j}^{R}(p)=\int_{-R}^{R}\Bigg[\frac{1}{2}F\left(\Psi(p,\varepsilon_{j}s),
\varepsilon_{j}\nabla u_{j}(\Psi(p,\varepsilon_{j}s))\right)^{2}
 +W\bigl(u_{j}(\Psi(p,\varepsilon_{j}s))\bigr)\Bigg]J(p,\varepsilon_{j}s)\,\mathrm{d}s.
\end{equation}
Then, the anisotropic Allen-Cahn energy $E_{\varepsilon_j,F}(u_j)$ in $N_{R\varepsilon_j}(\Gamma)$ can be rewritten as
\[E_{\varepsilon_j,F}(u_j;N_{R\varepsilon_j}(\Gamma))= \int_{N_{R\varepsilon_j}(\Gamma)} \left( \frac{\varepsilon_j}{2}F(x,\nabla u_j)^2 + \frac{W(u_j)}{\varepsilon_j} \right) \mathrm{d}V_g= \int_\Gamma I_j^R(p) \,\mathrm{d}\mathcal{H}_g^{n-1}(p). \]


\begin{lemma}\label{lem:one-layer}
Let $u_j$, $\varepsilon_j$ and $\Gamma$ be as in Theorem \ref{thm:main}. For every \(R>0\),
\begin{equation}\label{e6.7}
\limsup_{j\to\infty}E_{\varepsilon_j,F}
\bigl(u_j;N_{R\varepsilon_j}(\Gamma)\bigr)
\le \sigma_W \mathcal{A}_F(\Gamma).
\end{equation}
\end{lemma}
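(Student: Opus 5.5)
The plan is to write the energy as $\int_\Gamma I_j^R(p)\,\mathrm d\mathcal H_g^{n-1}(p)$, bound $I_j^R$ uniformly, and show pointwise that
\[
\limsup_j I_j^R(p_j)\le \sigma_W F(p,\nu_\Gamma(p))
\]
along every sequence $p_j\to p$. Since $\Gamma$ is compact, this sequential statement gives $\limsup_j \sup_{p\in\Gamma}\bigl(I_j^R(p)-\sigma_W F(p,\nu_\Gamma(p))\bigr)\le 0$. Integrating over $\Gamma$ then yields \eqref{e6.7}.

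\textbf{Uniform bound.} By the interior gradient estimate (Lemma~\ref{lem:compactness} applied to the rescalings), $\varepsilon_j|\nabla u_j|\le C$. Together with $|u_j|\le1$ and $J(p,\varepsilon_j s)\to1$, this gives $I_j^R(p)\le C R$ uniformly in $p$ and $j$. Hence it suffices to control $\limsup_j I_j^R(p_j)$, and we argue by contradiction. Suppose a sequence $p_j$ satisfies $I_j^R(p_j)\ge \sigma_W F(p_j,\nu_\Gamma(p_j))+\delta$. Pass to a subsequence with $p_j\to p$ and apply Proposition~\ref{prop:blowup}: the rescaled functions $v_j(y)=u_j(\exp_{p_j}(\varepsilon_j y))$ converge in $C^{1,\alpha'}_{\mathrm{loc}}$ to $v\in\{\pm1,\ \pm U((y\cdot\nu-a)/F(p,\nu))\}$, where $\nu=\nu_\Gamma(p)$.

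\textbf{Identifying the limit of $I_j^R(p_j)$.} The curve $s\mapsto\Psi(p_j,\varepsilon_j s)$ equals $\exp_{p_j}(\varepsilon_j s\,\nu_\Gamma(p_j))$, which in the rescaled coordinates is the straight segment $y=s\,\nu_\Gamma(p_j)$. Moreover $\varepsilon_j\nabla u_j$ at that point corresponds to $\nabla_{g_j}v_j(s\nu_\Gamma(p_j))$ via $P_j$, and $P_j\to\mathrm{id}$. By $C^1_{\mathrm{loc}}$ convergence, continuity of $F$, and $J\to1$ uniformly on $|s|\le R$,
\[
I_j^R(p_j)\to\int_{-R}^R\Bigl[\tfrac12F(p,\nabla v(s\nu))^2+W(v(s\nu))\Bigr]\mathrm ds .
\]
If $v\equiv\pm1$ this limit is $0$. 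Otherwise $v(s\nu)=\pm U((s-a)/F(p,\nu))$, and one-homogeneity gives $F(p,\nabla v)=|U'|$. Using equipartition $\tfrac12U'^2=W(U)$ (which follows from \eqref{e4.8} and the decay of $U'$), the integrand is $2W(U(t))$ with $t=(s-a)/F(p,\nu)$. Substituting $\mathrm ds=F(p,\nu)\,\mathrm dt$, the integral is at most
\[
F(p,\nu)\int_{\mathbb R}U'(t)\sqrt{2W(U)}\,\mathrm dt=\sigma_W F(p,\nu).
\]
The limit is therefore at most $\sigma_W F(p,\nu_\Gamma(p))$. This contradicts $\liminf_j I_j^R(p_j)\ge \sigma_W F(p,\nu_\Gamma(p))+\delta$, since $F(p_j,\nu_\Gamma(p_j))\to F(p,\nu_\Gamma(p))$.

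\textbf{Main obstacle.} The delicate step is the convergence of $I_j^R(p_j)$ itself. Proposition~\ref{prop:blowup} is phrased in the normal coordinates at $p_j$, whereas $I_j^R$ uses the Fermi map $\Psi$. One must check that the Fermi segment coincides with the normal geodesic, which holds because both are the geodesic from $p_j$ in direction $\nu_\Gamma(p_j)$. One must also check that the identification of $\nabla u_j$ with $\nabla v_j$ through $P_j$ and $g_j$ costs only $o(1)$ uniformly on $|s|\le R$; this uses that $g_j\to\delta$ in $C^1_{\mathrm{loc}}$ and $P_j\to\mathrm{id}$.

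Finally, after the subsequence extraction the limiting direction is $\nu=\lim\nu_\Gamma(p_j)=\nu_\Gamma(p)$, and the slab in Proposition~\ref{prop:blowup} is normal to exactly this direction. This is what makes the one-dimensional profile be traversed transversally at unit rate.
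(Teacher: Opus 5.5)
Your proposal is correct and follows essentially the same route as the paper. The paper likewise argues by contradiction at points $p_j\to p$, uses the blow-up classification of Proposition~\ref{prop:blowup}, passes $I_j^R(p_j)$ to the one-dimensional integral along $s\nu_\Gamma(p)$, bounds that integral by $\sigma_W F(p,\nu_\Gamma(p))$ via the change of variables $t=(s-a)/F(p,\nu_\Gamma(p))$, and then integrates over $\Gamma$. The differences are cosmetic: your preliminary bound $I_j^R\le CR$ is not needed, and you invoke equipartition explicitly where the paper bounds the truncated integral of $\tfrac12U'^2+W(U)$ by the full one.
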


\begin{proof}
Fix \(R>0\). We first show that
\begin{equation}\label{e6.6}
\limsup_{j\to\infty}\sup_{p\in\Gamma}
\left(
I_j^R(p)-\sigma_WF(p,\nu_\Gamma(p))
\right)\le0.
\end{equation}
Assume by contradiction that
\[
\limsup_{j\to\infty}\sup_{p\in\Gamma}
\left(
I_j^R(p)-\sigma_WF(p,\nu_\Gamma(p))
\right)>0.
\]
By the compactness of \(\Gamma\), after passing to a subsequence,
there exist \(\varepsilon>0\) and \(p_j\in\Gamma\) with
\(p_j\to p\in\Gamma\) such that
\begin{equation}\label{e0.9}
I_j^R(p_j)-\sigma_WF(p_j,\nu_\Gamma(p_j))
>\frac{\varepsilon}{2}
\end{equation}
for every \(j\).

 By Proposition~\ref{prop:blowup}, after passing to a further
subsequence, the rescaled functions
\[
v_j(y)=u_j\bigl(\exp_{p_j}(\varepsilon_j y)\bigr)
\]
converge locally in \(C^{1,\alpha'}\) to an entire solution
\(v\) of
\[
\operatorname{div}\bigl(A(p,\nabla v)\bigr)-W'(v)=0.
\]
Moreover, \(v\) is either \(1\), \(-1\), or
\[
\pm U\left(
\frac{y\cdot\nu_\Gamma(p)-a}{F(p,\nu_\Gamma(p))}
\right)
\]
for some bounded constant \(a\).

 As in Proposition~\ref{prop:blowup}, write
$
g_j(y)=(\exp_{p_j}^*g)_{\varepsilon_j y}.
$
Then
\[
\varepsilon_j\nabla u_j
\bigl(\exp_{p_j}(\varepsilon_j y)\bigr)
=
\mathrm d(\exp_{p_j})_{\varepsilon_j y}
\bigl(\nabla_{g_j}v_j(y)\bigr).
\]
Since
\[
\Psi(p_j,\varepsilon_j s)
=
\exp_{p_j}\bigl(\varepsilon_j s\nu_\Gamma(p_j)\bigr),
\qquad
J(p_j,\varepsilon_j s)\longrightarrow1
\]
uniformly for \(s\in[-R,R]\), the local \(C^1\) convergence
and the convergence of the metrics give
$
I_j^R(p_j)\longrightarrow I_\infty^R(p),
$
where
\[
I_\infty^R(p)
=
\int_{-R}^{R}
\left[
\frac12 F\bigl(p,\nabla v(s\nu_\Gamma(p))\bigr)^2
+
W\bigl(v(s\nu_\Gamma(p))\bigr)
\right]\,\mathrm ds.
\]


If \(v\equiv 1\) or \(v\equiv -1\), then \(\nabla v=0\) and \(W(v)=0\), hence $I_\infty^R(p)=0.$

If \(v(y)=\pm U\left(\frac{y\cdot\nu_\Gamma(p)-a}{F(p,\nu_\Gamma(p))}\right)\),  then $v(s\,\nu_\Gamma(p))=\pm U\left(\frac{s-a}{F(p,\nu_\Gamma(p))}\right).$
Since \(F\) is even and one-homogeneous and \(U'>0\),
\[
F\bigl(p,\nabla v(s\,\nu_\Gamma(p))\bigr)=U'\left(\frac{s-a}{F(p,\nu_\Gamma(p))}\right).
\]
Under the change of variables \(t=(s-a)/F(p,\nu_\Gamma(p))\), we obtain
\[
I_\infty^R(p)
=
F(p,\nu_\Gamma(p))
\int_{t_1}^{t_2}
\left[
\frac12 U'(t)^2+W(q(t))
\right]\mathrm dt
\le
\sigma_W F(p,\nu_\Gamma(p)),
\]
where $\sigma_W=\int_{-\infty}^{\infty}\left[\frac12 U'(t)^2+W(U(t))\right]\mathrm dt$.
In conclusion, we have
\[
I_\infty^R(p)\le \sigma_W F(p,\nu_\Gamma(p)).
\]
Consequently,
\[
\limsup_{j\to\infty}
\left(I_j^R(p_j)-\sigma_W F(p_j,\nu_\Gamma(p_j))\right)\le 0,
\]
contradicting with \eqref{e0.9}. Hence \eqref{e6.6} holds.

Finally,
\[
\begin{aligned}
&E_{\varepsilon_j,F}(u_j;N_{R\varepsilon_j}(\Gamma))
-\sigma_W\mathcal A_F(\Gamma)\\
&\quad=
\int_\Gamma
\left(
I_j^R(p)-\sigma_WF(p,\nu_\Gamma(p))
\right)\,\mathrm d\mathcal H_g^{n-1}(p)\\
&\quad\le
\mathcal H_g^{n-1}(\Gamma)
\sup_{p\in\Gamma}
\left(
I_j^R(p)-\sigma_WF(p,\nu_\Gamma(p))
\right).
\end{aligned}
\]
Taking the upper limit as \(j\to\infty\) and using
\eqref{e6.6}, we obtain \eqref{e6.7}.

\end{proof}

\begin{lemma}\label{lem:tail}
Let $u_j$, $\varepsilon_j$ and $\Gamma$ be as in Theorem \ref{thm:main}.
There are $C,\gamma>0$ such that, for every  $R$ sufficiently large,
\begin{equation}\label{e6.8}
 \limsup_{j\to\infty}E_{\varepsilon_{j},F}
 \left(u_{j};N_{\rho}(\Gamma)\setminus N_{R\varepsilon_{j}}(\Gamma)\right)
 \leq Ce^{-2\gamma R}.
\end{equation}
\end{lemma}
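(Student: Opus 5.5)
We plan to combine the trapping of Proposition~\ref{prop:trapping}, the pointwise decay \eqref{e3.6} and the localized energy bound \eqref{e3.7} of Lemma~\ref{lem:pure-decay}. The idea is to decompose the annular region $N_\rho(\Gamma)\setminus N_{R\varepsilon_j}(\Gamma)$ into dyadic-in-$\varepsilon_j$ slabs and sum a geometric series.

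\textbf{Step 1 (distance to the nodal set).} By Proposition~\ref{prop:trapping}, $\{u_j=0\}\cap N_{r/2}(\Gamma)\subset N_{C_0\varepsilon_j}(\Gamma)$ for large $j$. By the Hausdorff convergence \eqref{e1.10}, there are no zeros of $u_j$ in $N_{2\rho}(\Gamma)\setminus N_{C_0\varepsilon_j}(\Gamma)$. We must also exclude zeros outside $N_{2\rho}(\Gamma)$ that could be close to points of $N_\rho(\Gamma)$. Since $\operatorname{dist}(N_\rho,\partial N_{2\rho})\ge\rho$, any such zero is at distance at least $\rho$. Hence, for $x\in N_\rho(\Gamma)$ with $t:=\operatorname{dist}_g(x,\Gamma)\ge R\varepsilon_j$,
\[
d_j(x):=\operatorname{dist}(x,\{u_j=0\})\ge \min\{t-C_0\varepsilon_j,\rho\}\ge \tfrac12 t
\]
once $R\ge 2C_0$. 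Such $x$ also satisfies $B_{2R_0\varepsilon_j}(x)\cap\{u_j=0\}=\emptyset$ once $R\ge 2C_0+4R_0$. Lemma~\ref{lem:pure-decay} then gives $1-s u_j(x)\le Ce^{-\gamma t/(2\varepsilon_j)}$, where $s$ is the sign on the relevant side, which is constant because $\Gamma$ is connected and two-sided.

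\textbf{Step 2 (slab decomposition).} For $k\ge 0$ we set
\[
S_k:=\{x\in N_\rho(\Gamma): (R+k)\varepsilon_j\le \operatorname{dist}_g(x,\Gamma)<(R+k+1)\varepsilon_j\}.
\]
We need a Caccioppoli estimate on $S_k$ in the spirit of \eqref{e3.7}. Rather than phrasing it through the level sets of $d_j$, we will redo the argument of \eqref{e3.7} directly with a cutoff $\eta_k$ adapted to $\operatorname{dist}_g(\cdot,\Gamma)$. We take $\eta_k\equiv1$ on $S_k$, supported in the enlarged slab $S_k^*$ obtained by thickening by $\varepsilon_j/2$, with $|\nabla\eta_k|\le C/\varepsilon_j$. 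Testing \eqref{eq3.8}, satisfied by $w=1-su_j$, with $\eta_k^2w$ gives
\[
E_{\varepsilon_j,F}(u_j;S_k)\le C\varepsilon_j^{-1}\int_{S_k^*}w^2\,\mathrm dV_g\le C\varepsilon_j^{-1}\,\mathrm{Vol}_g(S_k^*)\,e^{-\gamma(R+k-1)}.
\]
Here we used $c\ge0$ and Step 1. Since $\mathrm{Vol}_g(S_k^*)\le C\varepsilon_j\mathcal H^{n-1}_g(\Gamma)$ by the Jacobian $J$ of \eqref{e6.1}, each slab contributes at most $Ce^{-\gamma(R+k)}$, uniformly in $j$. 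The last slab near $\partial N_\rho$ may require the cutoff to extend slightly beyond $N_\rho$. This is allowed because Step 1 works up to $N_{3\rho/2}$.

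\textbf{Step 3 (summation).} Summing over $0\le k\le \rho/\varepsilon_j$ gives
\[
E_{\varepsilon_j,F}\bigl(u_j;N_\rho(\Gamma)\setminus N_{R\varepsilon_j}(\Gamma)\bigr)\le C\sum_{k\ge0}e^{-\gamma(R+k)}\le C'e^{-\gamma R}.
\]
Renaming $\gamma$ (replacing $\gamma$ by $\gamma/2$ in \eqref{e6.8}) yields the stated form $Ce^{-2\gamma R}$, and we then take $\limsup_j$.

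\textbf{Main obstacle.} The delicate point is Step 1. We need a lower bound on $\operatorname{dist}(x,\{u_j=0\})$ that is uniform and linear in $\operatorname{dist}(x,\Gamma)$, so we must rule out nodal components outside $N_{2\rho}(\Gamma)$ approaching $N_\rho(\Gamma)$. We also need the constants of Lemma~\ref{lem:pure-decay} to be uniform in $j$ and to apply to every point of each slab. The second concern is resolved by noting that the constants in \eqref{e3.6} depend only on $\lambda,\Lambda,\eta_0$ and $M$.
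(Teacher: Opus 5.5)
Your proposal is correct and follows essentially the same route as the paper: Proposition~\ref{prop:trapping} gives $\operatorname{dist}(x,\{u_j=0\})\gtrsim \operatorname{dist}_g(x,\Gamma)$, with zeros outside $N_{2\rho}(\Gamma)$ handled separately. The paper likewise uses width-$\varepsilon_j$ slabs with enlarged slabs and cutoffs of gradient $O(\varepsilon_j^{-1})$, the Caccioppoli bound \eqref{e3.7} combined with the decay \eqref{e3.6} and $\operatorname{Vol}_g\lesssim\varepsilon_j$, and a geometric sum. Two of your choices are slightly more careful than the paper's presentation. You re-derive the Caccioppoli estimate with a cutoff adapted to $\operatorname{dist}_g(\cdot,\Gamma)$, whereas \eqref{e3.7} as stated uses the distance to the nodal set. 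You also track the factor $\tfrac12$ in the exponent explicitly.
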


\begin{proof}
By Proposition~\ref{prop:trapping},
\[
\{u_j=0\}\cap N_{2\rho}(\Gamma)
\subset N_{C_0\varepsilon_j}(\Gamma).
\]
We take \(\rho\) small enough that normal coordinates
\(x=\Psi(p,t)\) are defined on \(N_{2\rho}(\Gamma)\), with
\[
|t|=\operatorname{dist}_g(x,\Gamma).
\]
For all sufficiently large \(j\), we have
\[
\operatorname{dist}_g(x,\{u_j=0\})
\ge \frac12\bigl(|t|-C_0\varepsilon_j\bigr)_+
\qquad
\text{for }x\in N_{\rho+2\varepsilon_j}(\Gamma).
\]
Indeed, if \(z\in\{u_j=0\}\cap N_{2\rho}(\Gamma)\), then
\[
\operatorname{dist}_g(x,z)
\ge
\operatorname{dist}_g(x,\Gamma)
-\operatorname{dist}_g(z,\Gamma)
\ge |t|-C_0\varepsilon_j.
\]
If \(z\in\{u_j=0\}\setminus N_{2\rho}(\Gamma)\), then
\[
\operatorname{dist}_g(x,z)
\ge 2\rho-|t|
\ge \frac12|t|
\]
for all sufficiently large \(j\), since
\(|t|<\rho+2\varepsilon_j\).


Since \(\Gamma\) is connected, \(u_j\) has a constant sign
\(s_\pm\in\{-1,1\}\) on each of the sets
\[
\{x\in N_{2\rho}(\Gamma):
C_0\varepsilon_j<\pm t<2\rho\}.
\]
The signs \(s_\pm\) may depend on \(j\).

Fix \(R>0\) sufficiently large.
For every integer \(k\ge\lfloor R\rfloor\) with
\(k\varepsilon_j<\rho\), define
\[
\mathcal A_{j,k}
=
\{x\in N_\rho(\Gamma):
k\varepsilon_j<|t|<(k+1)\varepsilon_j\},
\]
and
\[
\mathcal A_{j,k}^{*}
=
\{x\in N_{2\rho}(\Gamma):
(k-1)\varepsilon_j<|t|<(k+2)\varepsilon_j\}.
\]
The annuli \(\mathcal A_{j,k}\) are mutually disjoint
and cover
\(N_\rho(\Gamma)\setminus N_{R\varepsilon_j}(\Gamma)\)
up to a set of measure zero.
Moreover,
$
\mathcal A_{j,k}^{*}
\subset N_{\rho+2\varepsilon_j}(\Gamma).
$


For \(k>C_0+1+4R_0\), the preceding distance estimate gives
\[
\operatorname{dist}_g(x,\{u_j=0\})
\ge \frac12(k-1-C_0)\varepsilon_j
>2R_0\varepsilon_j
\qquad
\text{on }\mathcal A_{j,k}^{*}.
\]
Hence, by Lemma~\ref{lem:pure-decay}, there are
\(C,\gamma>0\), independent of \(j\) and \(k\), such that
\[
|1-s_\pm u_j|
\le Ce^{-\gamma(k-1-C_0)}
\qquad
\text{on }\mathcal A_{j,k}^{*}\cap\{\pm t>0\}.
\]
Choose a smooth cutoff \(\eta\), supported in
\(\mathcal A_{j,k}^{*}\), such that
\[
0\le\eta\le1,\qquad
\eta=1\text{ on }\mathcal A_{j,k},\qquad
|\nabla\eta|_g\le \frac{C}{\varepsilon_j}.
\]
 Applying \eqref{e3.7} and using that \(\operatorname{Vol}_g(\mathcal A_{j,k}^{*})\leq C \varepsilon_j\mathcal H_g^{n-1}(\Gamma)\), we obtain
\[
E_{\varepsilon_j,F}(u_j;\mathcal A_{j,k})
\leq C  \varepsilon_j^{-1}  e^{-2\gamma(k-1-C_0)}\varepsilon_jH_g^{n-1}(\Gamma)
= C_1 e^{-2\gamma(k-1-C_0)},
\]
where \(C\) is independent of \(j\), \(k\), and \(R\).
Taking \(R\) sufficiently large that
\(\lfloor R\rfloor>C_0+1+4R_0\), and summing over the
annuli, we obtain
\[
E_{\varepsilon_j,F}
\bigl(u_j;N_\rho(\Gamma)\setminus
N_{R\varepsilon_j}(\Gamma)\bigr)
 \le
\sum_{\substack{k\ge\lfloor R\rfloor
k\varepsilon_j<\rho}}
E_{\varepsilon_j,F}(u_j;\mathcal A_{j,k})
 \le
C\sum_{k\ge\lfloor R\rfloor}
e^{-2\gamma(k-1-C_0)}
\le Ce^{-2\gamma R}.
\]
Passing to the upper limit as \(j\to\infty\), we obtain
\eqref{e6.8}.

\end{proof}

Now, we provide the proof of Theorem~\ref{thm:main}.

\begin{proof}[Proof of Theorem~\ref{thm:main}]
For every fixed \(R>0\) sufficiently large, combining
Lemmas~\ref{lem:one-layer} and~\ref{lem:tail}, we obtain
\[
\limsup_{j\to\infty}
E_{\varepsilon_j,F}(u_j;N_\rho(\Gamma))
\le
\sigma_W\mathcal A_F(\Gamma)+Ce^{-2\gamma R}.
\]
Letting \(R\to\infty\), we conclude that
\begin{equation}\label{e0.11}
\limsup_{j\to\infty}
E_{\varepsilon_j,F}(u_j;N_\rho(\Gamma))
\le \sigma_W\mathcal A_F(\Gamma).
\end{equation}

Since \(N_\rho(\Gamma)\subset\subset N_{2\rho}(\Gamma)\) and
the limiting energy measure in \(N_{2\rho}(\Gamma)\)
is supported on \(\Gamma\), we have  \(\partial N_\rho(\Gamma)\cap\Gamma=\varnothing\).
Thus \eqref{e1.11} gives
\begin{equation}\label{e0.12}
\lim_{j\to\infty}
E_{\varepsilon_j,F}(u_j;N_\rho(\Gamma))
=
m\sigma_W\mathcal A_F(\Gamma).
\end{equation}
From \eqref{e0.11}, \eqref{e0.12}, and
\(\sigma_W\mathcal A_F(\Gamma)>0\), we deduce \(m\le1\).
Since \(m\) is a positive integer, it follows that \(m=1\).
\end{proof}

\subsection*{Acknowledgements}
The authors are grateful to Dr. Ling Wang for telling us this problem and giving some suggestion for this work.

\end{document}